\newtheorem{theorem}{Theorem}[section]
\newtheorem{lemma}[theorem]{Lemma}
\theoremstyle{definition}
\newtheorem{definition}[theorem]{Definition}
\newtheorem{example}[theorem]{Example}
\newtheorem{proposition}[theorem]{Proposition}
\theoremstyle{remark}
\newtheorem{remark}[theorem]{Remark}
\newtheorem{assumption}[theorem]{Assumption}
\def\cal{\mathcal}
\def\la{\langle}
\def\ra{\rangle}
\def\EE{{\mathbb E}}
\def\deltae{\epsilon}
\def\D{\partial}
\numberwithin{equation}{section}
\begin{document}

\title[Mean field social optimization: person-by-person optimality]{Mean field social optimization:
 feedback person-by-person optimality and the dynamic programming equation}

\author{Minyi Huang}
\address{School  of Mathematics and Statistics, Carleton University,
Ottawa, ON K1S 5B6, Canada, and GERAD, Montreal, QC H3T 2A7, Canada}
\email{mhuang@math.carleton.ca}

\author{Shuenn-Jyi Sheu}
\address{Department of Mathematics, National Central University, Chung-Li, Taiwan and Department of Mathematical Sciences, National Chengchi University, Taipei, Taiwan}
\email{sheusj@math.ncu.edu.tw}

\author{Li-Hsien Sun}
\address{Institute of Statistics, National Central University, Chung-Li, Taiwan}
\email{lihsiensun@ncu.edu.tw}

\subjclass[2000]{}

\date{May 2026. The first version appeared as a GERAD Technical Report G-2024-45, Aug 2024 (https://www.gerad.ca).}

\keywords{Controlled diffusion, mean field social optimization, person-by-person optimality, dynamic programming, master equation}

\begin{abstract}
We consider mean field social optimization in nonlinear diffusion models.
By  dynamic programming with a representative agent employing cooperative optimizer selection,
we derive a new Hamilton--Jacobi--Bellman (HJB) equation to be called the master equation of the value function. Under some regularity conditions, we establish $\epsilon$-person-by-person optimality of the master equation-based control laws, which may be viewed as a necessary condition for nearly attaining the social optimum. A major challenge in the analysis is to obtain tight estimates, within an error of $O(1/N)$, of the social cost having order $O(N)$. This will be accomplished by multi-scale analysis via constructing two auxiliary master equations. We illustrate  explicit solutions of the master equations  for the linear-quadratic (LQ) case, and give an application to systemic risk.
\end{abstract}

\maketitle

\tableofcontents

\section{Introduction}

Mean field social optimization studies decision problems involving a large number of agents which have a  common optimization objective and interact through coupling in their individual dynamics or costs, or both \cite{HCM12}. These problems are also referred to as large population optimal control \cite{DPT20}.  The cooperative behavior of the agents differs from the noncooperative behavior of the agents in mean field games \cite{CHM17}. The reader is referred to \cite{AM15,HWY21,HCM12,HY21,WZ17} for the analysis of social optima in a linear-quadratic (LQ) framework.
McKean--Vlasov optimal control has been studied in \cite{BCP18,CDJS22,CD15,DPT22,L17,PW17,TTZ21,WZ20},
 and may be heuristically interpreted as a limit form of large population optimal control. Cooperative mean field control has applications in
economic theory \cite{NM18}, collective choice problems \cite{SNM18}, multi-agent flocking \cite{PRT15},
and power systems \cite{CBM17}. The notion of social optima is also useful
for measuring (in)efficiency of mean  field games \cite{BT13,CR19}.

In this paper we analyze social optimization for a class of nonlinear models.
 Let $(\Omega, {\mathcal F}, \{{\mathcal F}_t\}_{t\ge 0}, {\mathbb P} )$ be the underlying filtered probability space satisfying the usual condition.
Consider a population of  agents ${\cal A}_i$, $1\le i\le N$, satisfying the stochastic differential equations (SDEs):
\begin{align}
dX_t^i=\ & f(X_t^i, u_t^i, \mu_{t}^{-i})dt+
\sigma dW_t^i
+\sigma_0 dW_t^0, \quad  1\le i\le N, \quad  t\ge 0, \label{sdeXi}
\end{align}
where $X_t^i$ and $u_t^i$ are agent ${\mathcal A}_i$'s   state and control, respectively, and
$$\mu_{t}^{-i}\coloneqq \frac{1}{N-1}\sum_{1\le j\le N, j\ne i}\delta_{ X^j_t}$$ is  the empirical distribution of all other $N-1$ agents, with $\delta_x$ being the Dirac measure at $x\in \mathbb{R}^n$. The initial states $\{X_0^i, 1\le i\le N\}$ are ${\mathcal F}_0$-measurable and are independent with finite second moment. The $N+1$ ${\mathcal F}_t$-adapted  standard Brownian motions $\{W^j, 0\le j\le N\}$, as the individual noises and the common noise, respectively, are independent  and   also independent of the initial states.
The dimensions of  $X_t^i$,  $u_t^i$,
$W^i_t$ and $ W^0_t$ are $n$, $n_1$, $n_2$, $n_3$, respectively, and $f$, $\sigma$ and $\sigma_0$ have compatible dimensions.
We restrict to the case of constant noise coefficients. Our approach can deal with  state and control dependent noise.

Agent ${\cal A}_i$ has its own cost
\begin{align}
&J_i(u^i(\cdot), {\mathbf u}^{-i}(\cdot)) =\EE\int_0^T L(X_t^i, u_t^i, \mu_{t}^{-i}) dt +\EE\ g(X_T^i, \mu_T^{-i}),\label{cosXi}
\end{align}
where  ${\mathbf u}^{-i}(\cdot)$  denotes the controls of all agents other than ${\cal A}_i$. To avoid heavy notation,  we do not include $t$ as an argument in $f, \sigma, \sigma_0$ and  $L$, but can treat the $t$-dependent case  without further difficulty.
The social cost is given by
\begin{align}\label{jsocN}
J_{\rm soc }^{(N)}(\mathbf{u})= \sum_{i=1}^N J_i,
\end{align}
where ${\mathbf u}\coloneqq (u^1, \cdots, u^N)$.

\subsection{Method, analytical challenges, and contributions}

One may attempt to
minimize  the social cost by directly solving an optimal control problem. This approach, however, becomes infeasible  when $N$ is  large.
Instead, we develop our solution by studying the optimizing behavior of a representative agent.
We exploit a simple but useful idea in team decision theory called person-by-person (PbP) optimality. The reader is referred to \cite{H80,SM76,WS00} for its characterization and to \cite{CA18} for its feedback form in decentralized stochastic control.  To explain the idea, let ${\mathcal J}(u^1, \cdots, u^N)$ be the team cost of $N$ agents with strategies $u^i$. Under a given information structure, if the team   attains its optimum by a joint strategy $(\hat u^1, \cdots, \hat u^N)$, then no agent can unilaterally take a new strategy to improve for the team. PbP optimality is a necessary condition for team optimality.

Our  PbP optimality-based approach studies the value function of a representative agent and applies dynamic programming in an extended state space including an individual state $x$ and a measure $\mu$ describing the mean field.
This method enables the agent to choose its optimizer in a feedback form $\hat u^i= \phi(t, x, \mu)$, which differs from \cite{SHM16}, where the control perturbation is
a non-anticipative process. In the end we obtain a special Hamilton--Jacobi--Bellman (HJB) equation, which will be called the master equation for the value function; some heuristic derivation has appeared in \cite{HSS20} without performance analysis. Master equations have been widely studied in mean field games \cite{BFY13,CDLL19,CD18} and also mean field optimal control \cite{BCP18,BFY13,DPT22,PW17}.

In the setting of social optimization (or large population optimal control), our approach above differs from some existing works, which approximate the $N$-agent optimal control problem by a McKean--Vlasov optimal control problem (see e.g. \cite{DPT22,DPT20,L17,WZ20}). As a result, their value function takes the form ${V}_{\rm mv}(t, \mu)$, only characterizing the performance of the population as a whole. However, in  practical cooperative decision scenarios, each constituent agent still desires to know its own performance; our approach responds to this need and specifies an individual's value function $V(t,x,\mu)$.
The authors in \cite{BCP18} formulate an optimal control problem combing a representative agent's state equation with the McKean--Vlasov dynamics, both controlled by the same process. They prove a dynamic programming principle without common noise, and the drift term in their master equation is structured differently from ours.

When the control laws $(\hat u^1, \cdots, \hat u^N)$ of $N$ agents have been determined from the master equation, an issue of central importance is their performance.
 For certain classes of models, the value function of the controlled McKean--Vlasov dynamics has been shown to be the limit of the value function of the $N$-agent optimal control problem as $N\to \infty$ \cite{CDJS22,DPT22}. However,
to our best knowledge, except  for LQ cases \cite{FHH23,HCM12,HN16,WZ17}, there has existed no past work on social optimization addressing the performance of the infinite population-based control laws when they are applied by finite populations. As it turns out, our PbP optimality-based approach provides a tractable framework for such performance analysis.
 We will establish  approximate PbP optimality  for the master equation-based control laws in optimizing $J^{(N)}_{\rm soc}$, which may be viewed as a necessary condition for achieving near social optimality. Naturally, it is desirable to bound the gap between the attained performance and exact PbP optimality as tightly as possible. A loose bound would not tell much about performance.

PbP optimality may be viewed as a special case of a Nash equilibrium where all agents take the social cost as their individual costs. For this reason, our performance estimate in terms of approximate PbP optimality is similar to the $\epsilon$-Nash equilibrium analysis in mean field games \cite{CHM17}. However, we confront a far more difficult problem, especially under model nonlinearity.    We may think of $J^{(N)}_{\rm soc}$ as a quantity of order $O(N)$ while expecting the agent in question  to minimize it up to an accuracy of $o(1)$.
   The estimate of the performance loss becomes very intricate since we face quantities of such drastically different scales as $O(N)$ and $o(1)$. In contrast, the $\epsilon$-Nash equilibrium analysis involves quantities of scales $O(1)$ and $o(1)$, for an agent's cost and its performance loss, respectively.
 Under some regularity  conditions pertaining to the solution of the master equation, we prove that
$\hat u^i$ is nearly PbP optimal, with an inadequacy (or regret) of at most $O(1/N)$ for optimizing $J^{(N)}_{\rm soc}$.

Our performance analysis depends on exploiting the multi-scale feature when an individual attempts to optimize the social cost, where the key idea is to decompose the social cost roughly  as the sum of a much larger macroscopic term and another term  that carries information on the representative agent and so will be called the instrumental value function. Meanwhile,  we need to introduce two auxiliary master equations. Such combined usage of functions describing phenomena of different scales, together with the construction of their equations,  has got much of its inspiration from the physics literature on computing tiny quantities which are superposed to very large ones \cite[p. 199-209]{ABS75} \cite[p. 195-198]{D92}
and also from perturbation methods of dynamical systems \cite[ch. 3]{H13}.

The paper is organized as follows.
Section \ref{sec:me} introduces the master equation of the value function. To prepare for performance analysis,  Section \ref{sec:PbP} introduces the instrumental value function and two auxiliary master equations.
The asymptotic PbP optimality theorem is proved in Section \ref{sec:PO}. Section \ref{sec:lq} illustrates explicit solutions of the master equations by linear-quadratic (LQ) models and a systemic risk example.

\subsection{Notation}
\label{sec:sub:no}

The Frobenius norm of a vector or matrix $v$ is denoted by $|v| $. For the function
$\psi(x)$ from $\mathbb{R}^n$ to $\mathbb{R}$, we denote the partial derivatives
 $\partial_{x_i} \psi$ and
$\partial_{x_ix_j} \psi\coloneqq \partial_{x_i} (\partial_{x_j} \psi )$. Denote $\partial_x \psi\coloneqq (\partial_{x_1} \psi , \cdots, \partial_{x_n}\psi)$ as a row vector. The Hessian matrix is
$
\partial_x^2 \psi \coloneqq  (\partial_{x_i x_j}\psi)_{1\le i,j\le n},
$
where $\partial_{x_i x_j}\psi$ is the $(i,j)$-th entry of the matrix.
If $\psi(x,y)$ is a scalar  function with $x, y\in \mathbb{R}^n$, we denote $$
\partial_{xy}\psi(x,y) \coloneqq   (\partial_{x_i y_j}\psi(x,y) )_{1\le i,j\le n}.
$$
The evaluation $\psi(x,y)|_{y=x}$ gives $\psi(x,x)$. When there is only one space variable $x$ in $\psi$ (as in $\psi(t, x, \mu)$), we use the short notation $\psi_x = \partial_x\psi $ and $\psi_{xx}=\partial_x^2 \psi$.

Let ${\cal P}_2({\mathbb R}^n)$ be the space of Borel probability measures
on ${\mathbb R}^n$ with finite second moment.
 On ${\cal P}_2({\mathbb R}^n)$, we endow the Wasserstein distance $W_2(\mu, \nu)$
 so that it becomes a complete metric space \cite{AGS05}.
 Let ${\mathcal P}_{\rm em}^k(\mathbb{R}^n)$ consist of all probability measures $\mu$ on $\mathbb{R}^n$ such that $\mu=\frac{1}{k}\sum_{i=1}^k{\delta_{{x}^i}}$ for some ${\mathbf x}=(x^1, \cdots, x^k) $ with ${ x}^i\in \mathbb{R}^n$, $1\le i\le k$.

For  a function $\psi:\mathbb{R}^n\to \mathbb{R}^k$ and a probability measure $\mu$, we write the integral $\int_{\mathbb{R}^n} \psi (x) \mu (dx) $ as  $ \la\mu ,\psi \ra $ or $\la \psi\ra_\mu$. We sometimes  write such form $\langle \mu(dy), \psi(t,y,\mu) \rangle$ to explicitly indicate the variable of integration.
  For two probability measures $\mu_1$ and $\mu_2$, $\la \mu_1-\mu_2, \psi \ra \coloneqq  \la\mu_1 ,\psi \ra - \la\mu_2 ,\psi \ra$.

Denote the agent index set ${\mathcal N}=\{1, \cdots, N\}$, and ${\mathcal N}_{-i}={\mathcal N}-\{i\}$. Throughout the paper, the agent indices $i, j\in {\mathcal N}$, etc, appearing in ($X^i_t$, $u_t^i$, $W_t^j$, $x^i$, $u^j$, etc.)  are always interpreted as a superscript, and not as an exponent.
The controls of the $N$ agents are written as ${\mathbf u}\coloneqq (u^1, \cdots, u^N).$ Further let ${\mathbf u}^{-i}$ be the controls of all agents except ${\mathcal A}_i$.
We follow \cite{CDLL19} (which uses the notation $\delta V/\delta \mu$) to define the measure differentiation for a function $V(t, x, \mu)$; for  fixed $(t, x)$, the derivative $\delta_\mu V$, as a Lebesgue measurable function from $\mathbb{R}^n$ to $\mathbb{R}$, is denoted by $\delta_\mu V(t, x, \mu; y)$ satisfying the normalization condition
$\langle \mu(dy), \delta_\mu V(t,x,\mu;y) \rangle=0  $, where $y\in \mathbb{R}^n$ after the semicolon  is the new variable arising from the differentiation. Higher order derivatives are similarly introduced and denoted in the form $\delta_{\mu\mu} V(t, x, \mu; y,z)$, etc. A function (such as $\psi(t,x,\mu)$) is said to be jointly continuous if it is continuous under the product topology of $[0,T]\times \mathbb{R}^n \times {\cal P}_2(\mathbb{R}^n)$.

Unless otherwise indicated, the partial derivative will be based on the actual appearance of the variables. For example, $\partial_x$  in  $\partial_x \delta_\mu V(t, y, \mu; x)$ means differentiation with respect to the variable after $\mu$, even if we initially introduce the form $\delta_\mu V(t, x, \mu; y)$. The functions $\chi$,  $\chi_1$, $\chi_2$, $\cdots$ (such as $\chi(t,\mu)$, $\chi_1(t,x,\mu)$, etc.) are reserved as a normalizing term (see Section \ref{sec:sub:lqme} for examples) when differentiating a function with respect to $\mu$. In various estimates we use $C, C_1, C_2, \cdots$
as generic constants which do not depend on $N$ and may change from place to place.

\section{The value function and its master equation}
\label{sec:me}

 Let $ \mathbb{U}$ be a nonempty closed subset of $ \mathbb{R}^{n_1}$.
 Here and hereafter, we reuse $C_a$ as a generic constant in various assumptions.
We make the following standing assumptions:

(A1) The map $f$:
$ \mathbb{R}^n\times \mathbb{U}\times
{\cal P}_2(\mathbb{R}^n)\to \mathbb{R}^{n}$ is continuous, and
 Lipschitz continuous in $(x,\mu)$,
uniformly with respect to $u$.  In addition,
$|f(x,u,\mu)|\le  C_{a}(1+|x|+|u|+\langle|y|\rangle_\mu)$.

(A2) $L: \mathbb{R}^n \times \mathbb{U}\times {\mathcal P}_2(\mathbb{R}^n) \to \mathbb{R}_+$  and $g: \mathbb{R}^n \times {\cal P}_2(\mathbb{R}^n)\to \mathbb{R}_+$ are  continuous, and
$$
L( x, u, \mu),\ g(x,\mu) \le C_a(1+|x|^2+|u|^2+\langle|y|\rangle^2_\mu),
$$
for all  $ (x,u,\mu)\in  \mathbb{R}^n \times \mathbb{U}\times {\cal P}_2(\mathbb{R}^n)$.

\subsection{The $\phi$-value function}

To explain the idea underlying the  derivation of  the master equation,
 we analyze a family of  $(N,\phi)$-indexed control problems for agent ${\cal A}_i$ in a population of $N$ agents.
 Consider a set of control laws
$\phi(t,X_t^k, \mu_t^{-k})$, $1\le k\le N$,  with a continuous function $\phi $ from $[0,T]\times \mathbb{R}^n \times {\mathcal P}_2 (\mathbb{R}^n )$   to ${\mathbb U}$ that ensures a unique solution to the SDE system \eqref{sdeXi}.

For each $\phi$,
let $V^{N,\phi}(t, x^i, \mu^{-i})$ denote the cost
$$J_i(t,x^i, \mu^{-i},\hat{\mathbf{u}}(\cdot))=\EE\int_t^T L(X_s^i, u_s^i, \mu_{s}^{-i}) ds +\EE g(X_T^i, \mu_T^{-i})$$
of agent ${\cal A}_i$ when all $N$ agents apply the control laws $\hat u^k_s= \phi (s,X_s^k, \mu_s^{-k} ) $, $1\le k\le N$, on $[t,T]$
with $X_t^i=x^i$ and
$\mu_{t}^{-i}=\mu^{-i}\in {\mathcal P}_{\rm em}^{N-1}(\mathbb{R}^n)$.
The choice of $\mu^{-i}\in{\mathcal P}_{\rm em}^{N-1}(\mathbb{R}^n) $ is arbitrary, which may be matched by appropriate initial states of all other agents ${\mathcal A}_j$, $j\ne i$ at time $t$.   Then $V^{N,\phi}$ is a well-defined function on $[0,T]\times \mathbb{R}^n
\times {\mathcal P}_{\rm em}^{N-1}(\mathbb{R}^n)$.
It is clear that this cost depends on $N$. For notational simplicity, we will
just write it as $V^{\phi}(t, x^i, \mu)$ by dropping the superscript $N$. We may denote the cost of ${\cal A}_i$ in this form due to symmetry of the other $N-1$ agents. Specifically, if we permutate the other agents' initial states, the cost of ${\cal A}_i$ remains the same.
 We shall call $V^\phi$ the $\phi$-value function for ${\cal A}_i$.

Now let the $N$ agents be assigned the initial time-state values  $(t, x^j, \mu^{-j} )$, $1\le j\le N$, respectively, which are subject to the constraint
$\frac{1}{N}\delta_{x^j} + \frac{N-1}{N} \mu^{-j} = \frac{1}{N}\sum_{k=1}^N \delta_{x^k} $ for all $ j$.
For the initial time $t\in [0,T]$, denote the controlled state processes
for agents ${\cal A}_j$, $j\ne i$,
\begin{align}
dX_s^j=& f(X_s^j,
 \phi(s, X_s^j,  \mu_s^{-j}), \mu_s^{-j}) ds
  +\sigma dW_s^j +\sigma_0 dW_s^0,\quad s\ge t. \label{sdexjt}
\end{align}
For agent ${\cal A}_i$, we have
\begin{align}
dX_s^i =\ &f(X_s^i, u_s^i, \mu_s^{-i})ds +\sigma dW_s^i
 +\sigma_0 dW_s^0,\quad  t\le s\le T,\label{sdexit}
\end{align}
where we take $u_s^i\equiv u^i\in \mathbb{U}$ for $s\in [t,t+\deltae)$ and $u_s^i=\phi(s,X_s^i, \mu_s^{-i})$ on $[t+\deltae, T]$.
Under \eqref{sdexjt}--\eqref{sdexit}, denote
\begin{align}
J_i(t,x^i, \mu^{-i},u^i(\cdot), \hat {\mathbf u}^{-i}(\cdot)) & \coloneqq\EE\int_t^T L(X_s^i, u_s^i, \mu_{s}^{-i}) ds +\EE g(X_T^i, \mu_T^{-i}) \notag \\
 &= \EE\Big[ \int_t^{t+\epsilon} L(X_s^i, u^i,
\mu_s^{-i})ds+
V^\phi({t+\epsilon}, X^i_{t+\epsilon}, \mu^{-i}_{t+\epsilon} ) \Big].\label{Jidel}
\end{align}

Using  dynamic programming, we select $u^i\in {\mathbb U}$ on the small interval $[t, t+\deltae]$ to minimize
$$\EE\Big[ \int_t^{t+\epsilon} L(X_s^i, u^i,
\mu_s^{-i})ds+
\sum_{k=1}^NV^\phi({t+\deltae}, X^k_{t+\deltae},
\mu^{-k}_{t+\deltae} )\Big]
$$
 and next take $N\to \infty$. This allows us to formally derive the master equation of the value function $V(t,x,\mu)$, as the limiting form of $J_i$, after finding the minimizer $\hat u^i$ which is further required to be equal to $\phi$; see details in  appendix A.

\subsection{The dynamic programming equation (or master equation)}

Denote the matrices
$
\Sigma \coloneqq  \sigma \sigma^T,\
\Sigma_0 \coloneqq \sigma_0 \sigma_0^{T}.
$
 We interpret $(x, u^i)$ as the state and control of the representative agent $ {\mathcal A}_i $ in an infinite population.
The master equation of the value function $V(t,x,\mu)$ takes the form
\begin{align}
- \partial_t V(t, x, \mu)
= \ &   V_x (t, x, \mu)
f(x, \hat u^i, \mu) + \frac{1}{2} {\rm Tr}
[ V_{xx}(t, x, \mu) (\Sigma+\Sigma_0)  ]
\label{MEVwoCN}   \\
&+ L(x, \hat u^i, \mu)  + \langle \mu(dy),  \partial_y\delta_\mu V(t,x, \mu; y) f( y, \phi(t, y, \mu),\mu)\rangle \notag  \\
&+\frac{1}{2} \langle \mu(dy), {\rm Tr}[
\partial_{y}^2 \delta_\mu
V(t, x, \mu; y) (\Sigma+\Sigma_0)]\rangle \notag \\
&+\langle \mu(dy),  {\rm Tr} [ \partial_{xy}\delta_\mu V(t,x, \mu;y)  \Sigma_0 ]\rangle \notag  \\
&+\frac{1}{2}\langle \mu^{\otimes 2}(dydz),  {\rm Tr}[\partial_{yz}\delta_{\mu\mu} V (t, x, \mu;y, z)
\Sigma_0]\rangle ,\notag\\
&\hskip -1cmV(T, x, \mu)=g(x,\mu),\qquad (t,x,\mu)\in[0,T]\times \mathbb{R}^{n}\times
 {\cal P}_2(\mathbb{R}^n), \label{vtcg}
\end{align}
where
 \begin{align}
 \hat u^i\coloneqq & \phi(t,x, \mu)
 \label{optu_noCN}
 \end{align} is selected with $\phi$ fulfilling the  condition
\begin{align}
 \Phi(t, x,  \phi(t, x, \mu),\mu, V(\cdot))=& \min_{u^i}\Phi(t, x, u^i,\mu,   V(\cdot) )
\end{align}
for
\begin{align}
\Phi(t, x, u^i,\mu,   V(\cdot) )
 \coloneqq &
 V_x (t, x, \mu)
f(x, u^i, \mu) + L(x, u^i, \mu)  \nonumber\\
&   +\langle
\mu(dy),  \partial_{x} \delta_\mu V(t, y, \mu; x) f(x,  u^i,\mu)   \rangle . \nonumber 
\end{align}

\begin{definition}\label{def:vsol}
We call the pair $(V, \phi)$, as mappings from $[0,T]\times \mathbb{R}^n\times {\mathcal P}_2(\mathbb{R}^n) $ to $\mathbb{R}$ and $\mathbb{U}$, respectively,  a solution of the master equation \eqref{MEVwoCN} if the following conditions are satisfied:

(i) $V$, $\partial_tV$, $V_{x}$ and $V_{xx}$ are each jointly continuous  on
 $[0,T]\times \mathbb{R}^n \times {\mathcal P}_2 (\mathbb{R}^n )$;

(ii)
$\delta_\mu V(t,x,\mu; y)$, $\delta_{\mu\mu} V(t, x, \mu; y,z)$, $\partial_y\delta_\mu V(t,x,\mu; y)$, $\partial_y^2\delta_\mu V(t,x,\mu; y)$, $\partial_{xy}\delta_\mu V(t,x,\mu; y)$, $\partial_{yz}\delta_{\mu\mu} V(t, x, \mu; y,z)$ are    jointly continuous in $t\in [0,T]$, $x,y,z\in \mathbb{R}^n$, and
 $\mu \in   {\mathcal P}_2(\mathbb{R}^n)  $;

(iii) $\phi$ is continuous on $[0,T] \times \mathbb{R}^n\times {\mathcal P}_2(\mathbb{R}^n)$ with
 $$
 |\phi(t, x, \mu)|\le C_\phi (1+|x|+\langle |y| \rangle_{\mu});
 $$

(iv) there exists a constant $C_a$  such that for all $(t, x, y,z, \mu)$,
\begin{align}
\begin{cases}
|\delta_\mu V(t,x,\mu;y)|, \ |\delta_{\mu\mu} V(t,x,\mu; y,z)|\le C_a (1+|x|^2+|y|^2  +|z|^2+\langle |y|\rangle_{\mu}^2 ) , \\
 |\partial_y\delta_\mu V(t,x,\mu;y)|\le C_a(1+|x|+|y|+  \langle |y| \rangle_{\mu} ),\\
|\partial_{y}^2 \delta_\mu V(t,x,\mu; y)|,\ | \partial_{xy}\delta_\mu V(t,x, \mu;y)|\le C_a , \\
 |\partial_{yz}\delta_{\mu\mu} V (t, x, \mu; y,z)|\le C_a;
\end{cases} \label{vgc1}
\end{align}

(v) $(V,\phi)$ satisfies \eqref{MEVwoCN} with terminal condition \eqref{vtcg}.
\end{definition}

 For the subsequent analysis, it is necessary to look for a solution of  equation \eqref{MEVwoCN} with further restrictions on $V$. We introduce  the following class of functions ${\cal C}_V$ consisting of all functions $V$ from $[0,T]\times \mathbb{R}^n \times {\mathcal P}_2 (\mathbb{R}^n )$     to $\mathbb{R}$ such that (a)
$V$ fulfills conditions (i) and (ii) in Definition \ref{def:vsol}, and furthermore,    $\delta_{\mu\mu\mu} V(t,x,\mu;y,z,w)$,
$\partial_x \delta_\mu V(t,x,\mu;y)$,
$\partial_x^2 \delta_\mu V$, $ \partial_t\delta_\mu V $,
$\partial_y\delta_{\mu\mu} V (t,x,\mu;y,z) $, $\partial_x\delta_{\mu\mu} V  $,
$\partial_y^2\delta_{\mu\mu} V  $, $\partial_{xy}\delta_{\mu\mu} V  $,
 $\partial_{y}\delta_{\mu\mu\mu} V$,
$\partial_{yz}\delta_{\mu\mu\mu} V$ are jointly continuous for $t\in [0,T]$, $x,y,z,w\in \mathbb{R}^n$, $\mu\in {\cal P}_2(\mathbb{R}^n)$;
(b) $V$ satisfies \eqref{vgc1} and moreover,
\begin{align} \label{agc}
\begin{cases}
|V(t,x,\mu)|\le C_a(1+|x|^2+\langle | y|\rangle^{2}_\mu ), \\
|V_x(t,x,\mu)|\le C_a(1+|x|+\langle |y| \rangle_{\mu}  ) ,  \\
 |V_{xx}(t,x,\mu)|\le C_a,\\
  |\partial_t\delta_\mu V(t, x, \mu;y)| \le C_a(1+|x|^2+|y|^2+\langle |y| \rangle^2_{\mu}   ), \\
  |\partial_x\delta_\mu V(t, x, \mu;y)| \le C_a(1+|x|+|y|+\langle |y| \rangle_{\mu}   ),   \\
   |\partial_x^2\delta_\mu V(t, x, \mu;y)|\le C_a,  \\
  |\partial_y\delta_{\mu\mu} V(t, x, \mu;y,z)|\le C_a(1+|x|+|y|+|z|+ \langle |y| \rangle_{\mu}  ) ,  \\
 |\partial_y^2\delta_{\mu\mu} V(t, x, \mu;y,z)| ,  \
  |\partial_{xy}\delta_{\mu\mu} V(t, x, \mu;y,z)|\le C_a,    \\
   |\delta_{\mu\mu\mu}(t, x,\mu;y,z,w)|\le C_a(1+|x|^2+|y|^2+|z|^2
+|w|^2+\langle |y| \rangle_{\mu}^2 ),    \\
 |\partial_{y}\delta_{\mu\mu\mu} V(t,x, \mu;y,z,w)|\le C_a (1+|x|+|y|+|z|+|w|+ \langle |y| \rangle_{\mu} ),   \\
  |\partial_{yz}\delta_{\mu\mu\mu} V(t, x, \mu;y,z,w)|\le C_a .
\end{cases}
\end{align}

The above additional growth conditions in \eqref{agc}
are mainly for ensuring  well-defined coefficients in the equation
of $U$ in Section \ref{sec:PbP} and for constructing a solution of $U$, which involves differentiating both sides of \eqref{MEVwoCN} and leads to various higher order derivatives.

\section{Close-loop systems and auxiliary master equations}
\label{sec:PbP}

We begin by introducing the following assumption.

\begin{assumption}\label{assum:meV}
The master equation \eqref{MEVwoCN} has a solution pair $(V, \phi)$ with $V\in {\cal C}_V$.
\end{assumption}

A question of central interest is what kind of social performance can be achieved  when the  master equation-based control law
 $\phi $ in \eqref{optu_noCN} is used by all $N$ agents. Note that $\phi(t, x, \mu)$ involves the measure-valued variable $\mu$. For
 its  implementation, we will use the actual process $\mu_t^{-i}$ in  the control laws of the $N$ agents:
\begin{align}
\hat u_t^i= \phi(t, X_t^i, \mu_t^{-i}), \quad 1\le i\le N.
\end{align}
Under the set of control laws $\hat u_t^i$, $1\le i\le N$,
the closed-loop system takes the form
\begin{align}
dX_t^i=\ & f(X_t^i, \phi(t, X_t^i,\mu_t^{-i}), \mu_{t}^{-i})dt+
\sigma dW_t^i +\sigma_0 dW_t^0 , \quad 0\le t\le T, \label{allXphi0} \\
&\quad 1\le i\le N. \notag
\end{align}

 To analyze the performance of the control law $\phi$ in terms of  PbP optimality, we need to consider a general control
 $u_t^1$ for agent ${\cal A}_1$ while the other agents take
 \begin{align}
\hat u_t^k= \phi(t, X_t^k,\mu^{-k}_t), \quad 2\le k\le N.
 \end{align}
The key question is by how much the social cost may be reduced by optimizing $u_t^1$.
This leads to  a control problem with social cost $J_{\rm soc}^{(N)}(u^1(\cdot), \hat {\mathbf u}^{-1} (\cdot)) $ and   dynamics
\begin{align}
dX_t^1=\ & f(X_t^1, u_t^1, \mu_{t}^{-1})dt+
\sigma dW_t^1+\sigma_0 dW_t^0,  \label{X1u} \\
dX_t^k=\ & f(X_t^k, \phi(t, X_t^k,\mu_t^{-k}), \mu_{t}^{-k})dt+
\sigma dW_t^k +\sigma_0 dW_t^0 ,   \label{Xkuhat} \\
& 0\le t\le T, \quad 2\le k\le N.   \notag
\end{align}
  To facilitate the performance analysis, we need to examine two functions $U$ and $\overline U$ below, which are  related to the asymptotic behavior of the social costs attained by $\phi$ as $N\to \infty$.

The social cost under the control law $\phi$ in \eqref{optu_noCN} for all agents will   serve as a benchmark performance level.
Based on \eqref{allXphi0},  we consider the state processes
on $[s,T]$:
\begin{align}
dX_t^i=\ & f(X_t^i, \phi(t,X_t^i,\mu_t^{-i}) , \mu_t^{-i}) dt +
 \sigma dW_t^i+\sigma_0 dW_t^0 , \quad s\le t\le T, \label{Xphis}  \\
& \quad 1\le i\le N, \notag
\end{align}
which are assigned  the initial condition $(s, x^1,\cdots, x^N)$.  The use of  general initial conditions will enable us to derive a partial differential equation of the cost
\begin{align} \label{JNphi}
J^{(N)}_{\rm soc, \phi}(s, x^1, \cdots, x^N)\coloneqq  \sum_{i=1}^NJ_i(s,x^i, \mu^{-i}, \phi)
\end{align}
subject to  dynamics  \eqref{Xphis}. Here $J_i(s, x^i,\mu^{-i}, \phi)$ means that the cost is evaluated with initial time $s$, initial state $(x^i, \mu^{-i})$, and the same control law $\phi$ for $N$ agents.

 We will look for a suitable representation of $J_{\rm soc, \phi}^{(N)}$.
By symmetry of all other agents,  agent ${\cal A}_1$ may write $J_{\rm soc, \phi}^{(N)}$  in \eqref{JNphi} as
$U^{N}_{\rm soc }(s, x^1, \mu^{-1})$, where the initial condition $\mu^{-1}$, as the empirical  probability distribution of the state values
$(x^2, \cdots, x^N   )  $, is sufficient for describing the behavior of all other agents as a whole.

We take a decomposition of the form
\begin{align}
U^{N}_{\rm soc }(t,x^1, \mu^{-1})= U^N(t, x^1, \mu^{-1}) + (N-1) \overline U(t, \mu^{-1}),
\label{U1nrep}
\end{align}
where the function $\overline U$, to be identified later,  is defined on $[0,T]\times {\mathcal P}_2({\mathbb R}^n)$ and does not depend on $N$.
Given  $U_{\rm soc }^{N}(t, x^1, \mu^{-1})$, one might choose different pairs $(U^N, \overline U)$ for the right hand side of
\eqref{U1nrep}. However, an appropriate choice of $\overline U$ is crucial in order to prevent uncontrolled growth of $U^N$ as $N\to \infty$. We  will take
\begin{align}
\overline U (t, \mu)\coloneqq \langle \mu(dy), V(t, y, \mu) \rangle, \quad
\mu\in {\cal P}_2(\mathbb{R}^n),
\label{defUUb}
\end{align}
which is well-defined in view of \eqref{agc}.
Our next step is to get the limit form of $U^N(t, x^1, \mu^{-1})$, denoted by $U(t, x, \mu)$, and determine the equation of $U$. Since $U$ aggregates the effect of a particular agent's state $x$ on the social cost $J_{\rm soc, \phi}^{(N)}$  which itself approaches infinity as $N\to \infty$, we shall call $U$ the instrumental value function (IVF) to distinguish it from the  value function $V$.

\subsection{Auxiliary master equations}

For $(t,x, \mu)\in [0,T]\times \mathbb{R}^n\times {\mathcal P}_2(\mathbb{R}^n)$,  define
 \begin{align*}
& f^*(t,x,\mu)\coloneqq f(x, \phi(t, x,\mu), \mu),\quad    L^*(t,x,\mu)\coloneqq L(x, \phi(t, x,\mu), \mu)  .
 \end{align*}
We introduce the following assumption:

\begin{assumption}\label{assum:flg}
Each of the functions $ \delta_\mu f^*$, $ \delta_{\mu\mu} f^*$, $ \delta_\mu L^*$, $ \delta_{\mu\mu} L^*$, $ \delta_\mu g$ and $ \delta_{\mu\mu} g$ is jointly continuous in its arguments, and
there exists a constant $C_a$ such that
\begin{align*}
&|\delta_\mu f^*(t, x, \mu;y)| \le C_a(1+|x|+ |y|+\langle |y| \rangle_{\mu}  ),   \\
& |\delta_{\mu\mu} f^*(t, x, \mu;y,z)| \le C_a(1+|x|+ |y|+|z|+\langle |y| \rangle_{\mu}   ),    \\
 &|\delta_{\mu} L^*(t, x, \mu; y) |\le C_a(1+|x|^2+ |y|^2+\langle |y| \rangle_{\mu}^2),\\
&|\delta_{\mu\mu} L^*(t, x, \mu; y,z) |\le C_a(1+|x|^2+ |y|^2+|z|^2+\langle |y| \rangle_{\mu}^2),\\
&|\delta_{\mu} g( x, \mu; y) |\le C_a(1+ |x|^2+|y|^2+\langle |y| \rangle_{\mu}^2),\\
& |\delta_{\mu\mu} g( x, \mu; y,z) |\le C_a(1+ |x|^2+|y|^2+|z|^2+\langle |y| \rangle_{\mu}^2),
\end{align*}
for all $t\in [0,T]$, $x,y,z\in\mathbb{R}^n $.
\end{assumption}

We introduce the following master equation for the instrumental value  function $U(t,x,\mu)$:
\begin{align}
0=\ &\partial_t U(t, x,\mu) + U_x(t,x,\mu) f^*(t,x,\mu)  \label{MEUa}   \\
&+ \frac{1}{2}{\rm Tr}[  U_{xx}(t, x, \mu)
(\Sigma+\Sigma_0)]+L^*(t, x, \mu)\notag \\
 & + \langle \mu(dy), \ \partial_y\delta_\mu U(t,x,\mu;y) f^*(t,y,\mu)\rangle \nonumber \\
& +\frac12 \langle \mu(dy), \   {\rm Tr}[ \partial_{y}^2 \delta_\mu U(t,x,\mu; y)( \Sigma  +\Sigma_0)]\rangle \nonumber \\
& + \langle \mu(dy),   \mbox{Tr}[ \partial_{xy} \delta_\mu U(t, x, \mu; y) \Sigma_0] \rangle   \notag  \\
& +\frac12 \langle \mu^{\otimes2}(dydz), \mbox{Tr}[ \partial_{yz}  \delta_{\mu\mu} U(t, x, \mu; y,z) \Sigma_0 ]     \rangle    \notag   \\
 & + \langle \mu(dy) , \   \delta_\mu L^* (t,y,\mu; x) - \delta_\mu L^*(t,y,\mu; y) \rangle \nonumber  \\
 & + \langle \mu(dy) , \  \partial_y\delta_\mu \overline U(t,\mu;y)    [\delta_\mu f^* (t,y,\mu; x) - \delta_\mu f^*(t,y,\mu; y)] \rangle \nonumber \\
  &+   \frac{1}{2}\langle \mu(dy),    \mbox{Tr}\{ [\partial_{yz}
\delta_{\mu\mu}
\overline U(t,\mu;y, z)]|_{z=y}\Sigma\}\rangle \notag,\\
&\hskip -0.5cm U(T, x, \mu)= g(x, \mu) + \langle \mu(dy), \delta_\mu g(y,\mu; x)- \delta_\mu g(y,\mu; y)  \rangle  ,  \label{UTm}  \\
&\qquad\qquad \qquad(t,x,\mu)\in[0,T]\times \mathbb{R}^n\times
 {\cal P}_2(\mathbb{R}^n),\notag
\end{align}
where $\overline U(t,\mu)$ is given by \eqref{defUUb}.

It will be helpful to explain how equation
\eqref{MEUa} is derived.
Our method is to follow the idea underlying Feynman--Kac's formula to derive an equation for
$U_{\rm soc}^{N}(t,x^1, \mu^{-1}) $  using the dynamics of $(X_t^1, \mu_t^{-1})$ in \eqref{Xphis}. Subsequently we separate terms of two different scales, depending on whether they are multiplied by $N-1$, to formally obtain two equations for $U^{N}$ and $\overline U$, respectively. Finally, taking $N\to \infty$, we obtain a limit form $U$ for $U^N$ and its  equation \eqref{MEUa}. See appendix D for details. By this procedure we have in fact identified the equation of $\overline U$ (see \eqref{MEUb}) first and will next show that \eqref{defUUb} gives a solution.

Our next step is to construct a particular  solution to  master equation \eqref{MEUa}.  To do this, we need to introduce another function $M$: $[0,T]\times {\mathcal P}_2(\mathbb{R}^2)\to \mathbb{R}$ by
 the following equation:
\begin{align}
0=&\ \partial_t M(t, \mu)+
\langle  \mu(dy) ,  \partial_y \delta_\mu  M(t, \mu; y) f^*(t, y, \mu)     \rangle \label{meG2}\\
& +\frac12 \langle \mu(dy), \mbox{Tr} [\partial_{y}^2\delta_\mu M (t,\mu;y) (\Sigma+\Sigma_0)]\rangle \nonumber \\
& +\frac12 \langle \mu^{\otimes2}(dydz), \mbox{Tr}[ \partial_{yz}\delta_{\mu\mu} M (t,\mu;y,z) \Sigma_0]\rangle \nonumber \\
& + \langle \mu^{\otimes2}(dydz), \partial_{y}\delta_\mu V (t,z,\mu;y) [\delta_\mu f^*(t,y,\mu;z) -\delta_\mu f^*(t,y,\mu;y)   ]\rangle, \nonumber \\
&+ \frac12 \langle \mu^{\otimes2}(dydw), \mbox{Tr}\{  [\partial_{yz}\delta_{\mu\mu} V (t,w,\mu;y,z)]|_{z=y} \Sigma\}\rangle,  \nonumber\\
&\hskip -0.5cm  M(T,\mu)=0, \quad (t,\mu)\in[0,T]\times
 {\cal P}_2(\mathbb{R}^n).
\end{align}

We have constructed this equation so as to use $M$ as an adjustment term in the representation of $U$ (see \eqref{defUU} below).
 With $V$ known, we may view \eqref{meG2} as a linear equation.

Let  the class ${\cal C}_M$ consist of all functions $M(t, \mu)$ from $[0,T]\times {\cal P}_2(\mathbb{R}^n)$ to $\mathbb{R}$ such that (i) $M(t, \mu)$, $\partial_tM(t, \mu)$ $\delta_\mu M(t, \mu;y)$, $\delta_{\mu\mu} M(t, \mu; y,z)$, $\partial_y\delta_\mu M(t, \mu;y)$, $\partial_y^2\delta_\mu M(t, \mu;y)$, $\partial_{yz} \delta_{\mu\mu} M(t, \mu;y,z)$  are jointly continuous; (ii) for  some constant $C_a,$
\begin{align*}
 & |\delta_\mu M(t,\mu;y) |,\  |\delta_{\mu\mu} M(t,\mu;y,z) |\le C_a(1+|x|^2+|y|^2+ \langle |y|^2 \rangle_\mu ),   \\
&|\partial_y\delta_\mu M(t,\mu;y)|\le C_a(1+|y|+\langle |y|\rangle_{\mu}), \\
& |\partial_y^2\delta_\mu M(t,\mu;y)|\le C_a ,  \ |\partial_{yz}\delta_{\mu\mu} M(t,\mu;y,z)|\le C_a.
\end{align*}

 We further introduce the following assumption.
\begin{assumption}\label{assum:meM}
There exists a solution $M\in {\cal C}_M$ to equation \eqref{meG2}.
\end{assumption}

\begin{theorem} Under Assumption \ref{assum:meV},
 $\overline U$ defined by \eqref{defUUb} satisfies
the following equation
\begin{align}
&0=  \partial_t\overline U(t,\mu) +\langle \mu,  L^*(t,y, \mu)    \rangle
+
 \langle \mu, \partial_y \delta_\mu \overline U (t, \mu; y) f^*(t, y, \mu)   \rangle  \label{MEUb} \\
& \qquad + \frac12 \langle \mu,  {\rm Tr} [\partial_{y}^2 \delta_\mu \overline U (t,\mu; y)(\Sigma +\Sigma_0)]  \rangle  \nonumber\\
&\qquad + \frac12 \langle  \mu^{\otimes2}(dydz),   {\rm Tr}[ \partial_{yz} \delta_{\mu\mu} \overline U(t,  \mu; y,z) \Sigma_0]      \rangle , \notag\\
&\overline U(T,\mu)=  \langle \mu, g(y,\mu) \rangle ,\quad (t,\mu)\in[0,T]\times
 {\cal P}_2(\mathbb{R}^n).
\end{align}

\end{theorem}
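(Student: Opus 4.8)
The plan is to prove \eqref{MEUb} by direct verification. Write $\overline U(t,\mu)=\langle\mu(dy),V(t,y,\mu)\rangle$ as in \eqref{defUUb}, differentiate this identity once in $t$ and once or twice in $\mu$, express each derivative of $\overline U$ through the derivatives of $V$, and substitute the master equation \eqref{MEVwoCN} that the pair $(V,\phi)$ satisfies. The hypothesis $V\in{\cal C}_V$ enters throughout: the joint continuity and the polynomial growth bounds of \eqref{vgc1}--\eqref{agc} make every integrand below $\mu$-integrable, license differentiation under the integral sign, and ensure that the coefficients on the right of \eqref{MEUb} are themselves well defined (this is already how $\overline U$ in \eqref{defUUb} was seen to be well defined).

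For the time derivative, differentiation under the integral gives $\partial_t\overline U(t,\mu)=\langle\mu(dy),\partial_tV(t,y,\mu)\rangle$. Evaluating \eqref{MEVwoCN} at the state value $y$ --- where $\hat u^i=\phi(t,y,\mu)$, so $f(y,\phi(t,y,\mu),\mu)=f^*(t,y,\mu)$ and $L(y,\phi(t,y,\mu),\mu)=L^*(t,y,\mu)$ --- and integrating against $\mu(dy)$ expresses $\partial_t\overline U(t,\mu)$ as $-\langle\mu(dy),\{\cdots\}\rangle$, where $\{\cdots\}$ collects the seven right-hand side terms of \eqref{MEVwoCN} evaluated at $y$, each mean-field integral written over fresh variables $z$, resp.\ $(z,w)$. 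For the measure derivatives, the product rule for the linear functional derivative applied to $\mu\mapsto\langle\mu(dy),V(t,y,\mu)\rangle$, in which $\mu$ occurs both as the integrating measure and as an argument of $V$, gives, modulo terms annihilated by the spatial differentiations that follow,
\begin{align*}
\delta_\mu\overline U(t,\mu;x)&=V(t,x,\mu)+\langle\mu(dy),\delta_\mu V(t,y,\mu;x)\rangle,\\
\delta_{\mu\mu}\overline U(t,\mu;x,x')&=\delta_\mu V(t,x,\mu;x')+\delta_\mu V(t,x',\mu;x)+\langle\mu(dy),\delta_{\mu\mu}V(t,y,\mu;x,x')\rangle .
\end{align*}
Differentiating these in the spatial arguments yields $\partial_x\delta_\mu\overline U$, $\partial_x^2\delta_\mu\overline U$ in terms of $V_x$, $V_{xx}$, $\partial_y\delta_\mu V$, $\partial_y^2\delta_\mu V$, and $\partial_{xx'}\delta_{\mu\mu}\overline U$ as the sum of two ``cross'' terms (each a mixed partial of $\delta_\mu V$, differentiated once in its state slot and once in its new slot) plus one $\mu$-integral of $\partial_{xx'}\delta_{\mu\mu}V$.

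It then remains to match the two expansions. Substituting the formulas just obtained into the right-hand side of \eqref{MEUb}: the $L^*$-terms coincide; $\langle\mu,\partial_y\delta_\mu\overline U\,f^*\rangle$ splits into $\langle\mu(dy),V_x(t,y,\mu)f^*(t,y,\mu)\rangle$ plus a double $\mu$-integral, which reproduce the first-order term and the $\partial_z\delta_\mu V$ term of \eqref{MEVwoCN}; the $(\Sigma+\Sigma_0)$-Hessian term of $\overline U$ splits into $\tfrac12\langle\mu(dy),{\rm Tr}[V_{xx}(t,y,\mu)(\Sigma+\Sigma_0)]\rangle$ plus a double integral, which reproduce the $V_{xx}$ term and the $\partial_z^2\delta_\mu V$ term; and in $\tfrac12\langle\mu^{\otimes2}(dydz),{\rm Tr}[\partial_{yz}\delta_{\mu\mu}\overline U\,\Sigma_0]\rangle$ the two cross contributions are exchanged by the symmetry of $\mu\otimes\mu$ together with $\Sigma_0=\Sigma_0^{T}$ and hence add up to the $\partial_{xy}\delta_\mu V$ term of \eqref{MEVwoCN}, while the surviving $\delta_{\mu\mu}V$ contribution reproduces the $\partial_{yz}\delta_{\mu\mu}V$ term. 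Thus all seven right-hand side terms of \eqref{MEVwoCN} are accounted for, $\partial_t\overline U(t,\mu)$ plus the remaining terms of \eqref{MEUb} sums to zero, and the terminal condition holds because $\overline U(T,\mu)=\langle\mu(dy),V(T,y,\mu)\rangle=\langle\mu(dy),g(y,\mu)\rangle$.

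The main obstacle is the bookkeeping of the measure derivatives: correctly differentiating an integral in which $\mu$ plays the double role of integrator and functional argument, and keeping track of which spatial slot of $\delta_\mu V$ and $\delta_{\mu\mu}V$ each partial derivative acts on, together with checking that the growth bounds in ${\cal C}_V$ indeed justify differentiating under the integral. The only step that is not mere relabeling of integration variables is the symmetrization of the two cross terms in the $\Sigma_0$-integral.
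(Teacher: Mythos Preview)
Your proposal is correct and follows essentially the same approach as the paper: compute $\delta_\mu\overline U$ and $\delta_{\mu\mu}\overline U$ from \eqref{defUUb} via the product rule (the paper displays these as \eqref{oUdmu1}--\eqref{oUdmu3}, including the normalizing terms $\chi,\chi_1$ that your spatial differentiations kill), substitute into the right-hand side of \eqref{MEUb}, and verify the result vanishes by integrating \eqref{MEVwoCN} against $\mu(dx)$. Your symmetrization of the two cross terms via the $\mu\otimes\mu$ swap together with $\Sigma_0=\Sigma_0^T$ is exactly the content of the paper's Lemma~\ref{lemma:trDyz} applied in \eqref{dVyz}.
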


\begin{proof}
For $\overline U$ defined in \eqref{defUUb},
we  have
\begin{align}
\delta_\mu\overline U(t, \mu;y)= V(t,y,\mu) + \chi(t,\mu) +\langle \mu(dw), \delta_\mu V(t, w, \mu;y) \rangle, \label{oUdmu1}
\end{align}
where $\chi$ is a normalizing term.
We have
\begin{align}
\delta_{\mu\mu}\overline U(t, \mu; y,z) = &\ \delta_\mu V(t, y, \mu; z)+\delta_\mu\chi(t, \mu;z) +\delta_\mu V(t, z, \mu; y)+\chi_1(t,\mu,y) \label{oUdu2}\\
&  +\langle \mu(dw),  \delta_{\mu\mu} V (t,w,\mu;y,z) \rangle.\notag
\end{align}
Therefore,
\begin{align}
\partial_{yz}\delta_{\mu\mu}\overline U(t, \mu; y,z) =&\partial_{yz}\delta_\mu V(t, y, \mu; z) +\partial_{yz}\delta_\mu V(t, z, \mu; y) \label{oUdmu3}\\
& +\langle \mu(dw),  \partial_{yz}\delta_{\mu\mu} V (t,w,\mu;y,z) \rangle.\notag
\end{align}
The partial differentiation in the last term can get inside the integration by an application of the dominated convergence theorem.

Substituting these expressions into the right hand side of \eqref{MEUb}, we
obtain
\begin{align}
\Xi \coloneqq&\  \partial_t\langle \mu, V(t,y,\mu)\rangle +\langle \mu,  L^*(t,y, \mu)    \rangle
+ \langle\mu ,  V_y(t,y,\mu) f^*(t, y,\mu)\rangle
   \label{CXi}\\
&+
 \langle \mu^{\otimes2}(dydw), \partial_y \delta_\mu V (t,w, \mu; y) f^*(t, y, \mu)   \rangle \notag  \\
& + \frac12 \langle \mu,  {\rm Tr} [
V_{yy}(t,y,\mu)(\Sigma +\Sigma_0)]  \rangle \notag  \\
&  + \frac12 \langle \mu^{\otimes 2}(dydw),  {\rm Tr} [\partial_{y}^2  \delta_\mu V (t,w,\mu;y)(\Sigma +\Sigma_0)]  \rangle  \nonumber\\
& + \frac12 \langle  \mu^{\otimes2}(dydz),   \mbox{Tr}\{[ \partial_{yz} \delta_{\mu} V(t, y, \mu; z) + \partial_{yz}\delta_{\mu} V(t, z, \mu; y)] \Sigma_0\}      \rangle  \notag\\
& + \frac12 \langle  \mu^{\otimes3}(dydzdw),   \mbox{Tr}\{ \partial_{yz} \delta_{\mu\mu} V(t, w, \mu;y, z)  \Sigma_0\}      \rangle     \notag  .
\end{align}
 By a change of variables, we further apply Lemma \ref{lemma:trDyz} to write the constituent term
\begin{align}
 & \langle  \mu^{\otimes2}(dydz),   \mbox{Tr}[\partial_{yz}\delta_{\mu} V(t, z, \mu; y)\Sigma_0]      \rangle\notag \\
&=  \langle  \mu^{\otimes2}(dydz),   \mbox{Tr}[\partial_{yz}\delta_{\mu} V(t, y, \mu; z)\Sigma_0].
 \label{dVyz}
\end{align}
 Now after substituting \eqref{dVyz} into \eqref{CXi}, we show $\Xi=0$
by integrating both sides of \eqref{MEVwoCN} using $\mu(dx)$, where we have $\langle \mu(dx), \partial_t V(t,x,\mu)\rangle = \partial_t\langle \mu(dx),  V(t,x,\mu)\rangle$.
\end{proof}

  We now take
 \begin{align}
 U(t, x, \mu)=\ &  V(t, x, \mu) + M(t, \mu) \label{defUU}   \\
   & \ + \langle \mu(dy),  \delta_\mu  V(t, y,\mu; x)- \delta_\mu  V(t, y,\mu; y)      \rangle \notag
 \end{align}
 for $(t,x,\mu)\in [0,T]\times \mathbb{R}^n\times
 {\cal P}_2(\mathbb{R}^n) $, and  proceed to show that this function satisfies \eqref{MEUa}.

\begin{remark}
We explain the idea behind the representation \eqref{defUU}.
We write $U^{N}_{\rm soc }(t,x^1, \mu^{-1})= \sum_{i=1}^N V^\phi (t, x^i, \mu^{-i}) $ and next expand $V^\phi(t, x^i, \mu^{-i})$ around $\mu^{-1}$.
The resulting sum is compared with \eqref{U1nrep} to suggest the construction in  \eqref{defUU}.
\end{remark}

\begin{theorem}\label{theorem:U}
Under Assumptions \ref{assum:meV}, \ref{assum:flg} and \ref{assum:meM},
 $U$  defined by \eqref{defUU} is a solution to  master equation   \eqref{MEUa}
 with terminal condition \eqref{UTm}.
\end{theorem}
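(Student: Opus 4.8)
The plan is to verify Theorem \ref{theorem:U} by direct substitution: plug the function $U$ from \eqref{defUU} into the right-hand side of \eqref{MEUa} and check that it vanishes, and check the terminal condition separately. The terminal condition is immediate, since $V(T,x,\mu)=g(x,\mu)$ and $M(T,\mu)=0$ reduce \eqref{defUU} at $t=T$ to exactly \eqref{UTm}. The regularity needed to make sense of \eqref{MEUa} for $U$ (joint continuity of $\partial_tU$, $U_x$, $U_{xx}$ and of the measure derivatives through $\partial_{yz}\delta_{\mu\mu}U$, with the prescribed growth) is inherited from $V\in{\cal C}_V$, $M\in{\cal C}_M$ and Assumption \ref{assum:flg}; indeed $\partial_{yz}\delta_{\mu\mu}$ of the last summand in \eqref{defUU} only brings in $\partial_{yz}\delta_{\mu\mu\mu}V$, which is precisely one of the higher derivatives built into ${\cal C}_V$. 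So the substance is the PDE.

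The first task is to express every derivative of $U$ entering \eqref{MEUa} in terms of derivatives of $V$ and $M$. Write $R(t,x,\mu)\coloneqq\langle\mu(dy),\delta_\mu V(t,y,\mu;x)-\delta_\mu V(t,y,\mu;y)\rangle$ for the third summand in \eqref{defUU}. The $x$-derivatives are immediate, e.g.\ $U_x=V_x+\langle\mu(dy),\partial_x\delta_\mu V(t,y,\mu;x)\rangle$ and $U_{xx}=V_{xx}+\langle\mu(dy),\partial_x^2\delta_\mu V(t,y,\mu;x)\rangle$, the diagonal piece $\delta_\mu V(t,y,\mu;y)$ being $x$-independent; note also $\partial_{xy}\delta_\mu M=0$ since $\delta_\mu M$ does not depend on $x$. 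The $\mu$-derivatives of $R$ come from the flat-derivative chain rule applied to an integrand that itself depends on $\mu$; this produces, at first order, a $\delta_{\mu\mu}V$ term, and at second order a $\delta_{\mu\mu\mu}V$ term, together with normalizing $\chi$-terms, and one must track carefully the diagonal term $\delta_\mu V(t,y,\mu;y)$, whose $\mu$-derivative puts the new variable in two distinct slots. Differentiation under the integral sign at each step is justified by dominated convergence using the bounds \eqref{agc} defining ${\cal C}_V$, those defining ${\cal C}_M$, and Assumption \ref{assum:flg}, exactly as in the preceding proof. It is convenient to record, from \eqref{oUdmu1}--\eqref{oUdmu3}, the derivatives $\delta_\mu\overline U$ and $\partial_{yz}\delta_{\mu\mu}\overline U$ in terms of $V$, since $\overline U$ (given by \eqref{defUUb}) appears explicitly as a coefficient in the last two lines of \eqref{MEUa}.

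With these identities substituted into the right-hand side of \eqref{MEUa}, I would split the result into three groups by provenance and show each vanishes. The first group consists of the terms built from $V(t,x,\mu)$, its $x$-derivatives, and the single-$\mu$ terms $\partial_y\delta_\mu V$, $\partial_y^2\delta_\mu V$, $\partial_{xy}\delta_\mu V$, $\partial_{yz}\delta_{\mu\mu}V$; using $L^*(t,x,\mu)=L(x,\hat u^i,\mu)$ and $U_xf^*=V_xf(x,\hat u^i,\mu)$ on the diagonal, these reassemble exactly the right-hand side of the master equation \eqref{MEVwoCN} for $V$ at $(t,x,\mu)$, so together with $\partial_tV$ they sum to zero. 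The second group collects the terms carrying $M$ and the $\partial_y\delta_\mu V(t,z,\mu;y)$-type cross terms appearing in \eqref{meG2}; these vanish because $M$ solves \eqref{meG2}. The third, residual group involves only $R$ together with the two $\overline U$-terms and the $\delta_\mu L^*$-difference term of \eqref{MEUa}; into those one substitutes \eqref{oUdmu1} and \eqref{oUdmu3} and checks, term by term, that $\partial_tR$, $R_xf^*$, the $\Sigma+\Sigma_0$ traces coming from $R_{xx}$ and $\partial_y^2\delta_\mu R$, the $\Sigma_0$ cross terms from $\partial_{xy}\delta_\mu R$ and $\partial_{yz}\delta_{\mu\mu}R$, and $\langle\mu(dy),\partial_y\delta_\mu R\,f^*\rangle$ are precisely compensated. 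As in the proof of the $\overline U$-equation \eqref{MEUb}, Lemma \ref{lemma:trDyz} is used to symmetrize the double integrals in the $\Sigma_0$ terms and the $z=y$ evaluations in the $\Sigma$ terms, and all $\chi$-terms drop out since they integrate to zero against $\mu$.

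I expect the bookkeeping in the third group to be the main obstacle. Differentiating $R$ twice in $\mu$ spawns a proliferation of terms — $\delta_{\mu\mu}V(t,y,\mu;x,\cdot)$, $\delta_{\mu\mu}V(t,y,\mu;y,\cdot)$, $\delta_{\mu\mu\mu}V$, and their $\partial_y$, $\partial_y^2$, $\partial_{xy}$, $\partial_{yz}$ variants — which must match exactly the $\delta_{\mu\mu}V$ and $\delta_{\mu\mu\mu}V$ content hidden inside the $\overline U$-derivatives \eqref{oUdu2}--\eqref{oUdmu3} and the last two lines of \eqref{meG2}. The two genuinely delicate points are (a) the diagonal terms $\delta_\mu V(t,y,\mu;y)$, where one variable plays two roles and a term is easily lost or double-counted, and (b) the common-noise cross-covariance contributions $\langle\mu(dy),{\rm Tr}[\partial_{xy}\delta_\mu U\,\Sigma_0]\rangle$ and $\tfrac12\langle\mu^{\otimes2},{\rm Tr}[\partial_{yz}\delta_{\mu\mu}U\,\Sigma_0]\rangle$, which draw in the higher measure derivatives of $R$ and must be reconciled against the corresponding $\Sigma_0$ lines of \eqref{MEVwoCN} and \eqref{meG2}. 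Once these are correctly accounted for, the cancellation is essentially forced: equations \eqref{meG2} and \eqref{MEUb} were designed so that this identity holds, as noted after \eqref{MEUa}, and the remaining work is to confirm that nothing is left over.
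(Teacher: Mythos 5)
Your overall strategy — substitute \eqref{defUU} into \eqref{MEUa}, split the result by provenance into a $V$-master-equation block, an $M$-equation block, and a residual, then cancel the residual using Lemma~\ref{lemma:trDyz} and the semi-symmetry property \eqref{ssp} — is in the right spirit, and your Groups~1 and~2 are identified correctly. The terminal condition and regularity remarks are also fine.

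However there is a genuine gap in your treatment of the residual group. Writing $R(t,x,\mu)\coloneqq\langle\mu(dy),\delta_\mu V(t,y,\mu;x)-\delta_\mu V(t,y,\mu;y)\rangle$, the term $\partial_t U = \partial_t V+\partial_t M+\partial_t R$ contributes $\partial_t R$ to your Group~3, and after moving $\partial_t$ under the integral this becomes an integral of $\partial_t\delta_\mu V(t,y,\mu;x)$. This is an object for which you have no formula: it is not a derivative of $U$, $M$, or $\overline U$, and no amount of algebraic rearrangement of the other residual terms (via Lemma~\ref{lemma:trDyz}, \eqref{ssp}, or \eqref{oUdmu1}--\eqref{oUdmu3}) can generate it, so the claim that everything in Group~3 is "precisely compensated" conceals rather than performs the key step. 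The only handle on $\partial_t\delta_\mu V$ is to apply the measure derivative $\delta_\mu$ to the master equation \eqref{MEVwoCN} itself, which yields an identity for $\delta_\mu\partial_t V(t,x,\mu;y)$, and then to invoke the commutation $\delta_\mu\partial_t V=\partial_t\delta_\mu V$. That commutation is exactly the content of the paper's Lemma~\ref{lemma:dtdmu}, a nontrivial extension of Schwarz's theorem to the closed interval $[0,T]$ (where the $t$- and $s$-derivatives are one-sided at the endpoints), and you never mention it, nor do you mention differentiating \eqref{MEVwoCN} in $\mu$. The paper's Appendix~C does precisely this: it $\mu$-differentiates \eqref{MEVwoCN}, swaps $x$ and $y$ in the result, and integrates against $\mu(dy)$ to obtain an equation for $\partial_t R$; this is added to \eqref{MEVwoCN} and \eqref{meG2} to form the summed equation $\Psi_V$, and the remaining work is to verify $\Psi_V=\Psi_U$ where $\Psi_U$ is the right-hand side of \eqref{MEUa} (minus $\partial_t U$) expanded via \eqref{defUU} and \eqref{defUUb}. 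Without this additional equation for $\partial_t R$ — and without the commutation lemma that legitimizes its derivation — your residual group cannot be closed, so the plan as stated does not constitute a proof.
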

\begin{proof}
See appendix C.
\end{proof}

\subsection{A minimizer property for $\phi$}

The next lemma gives a useful inequality resulting from taking $u^i$ in place of $\phi(t, x, \mu)$ in equation \eqref{MEUa} for $U$.

\begin{lemma}\label{lemma:optimizer}
Suppose Assumptions \ref{assum:meV}, \ref{assum:flg} and \ref{assum:meM} hold and    let $U$ be given by \eqref{defUU}.
Then for each $u\in \mathbb{U }$, we have
\begin{align}
0\le \ &\partial_t U(t, x,\mu) + U_x(t,x,\mu) f(x,u,\mu)  \label{MEU}   \\
&+ \frac{1}{2}{\rm Tr}[  U_{xx}(t, x, \mu)
(\Sigma+\Sigma_0)]+L( x, u,\mu)\notag \\
 & + \langle \mu(dy), \ \partial_y\delta_\mu U(t,x,\mu;y) f^*(t,y,\mu)\rangle \nonumber \\
& +\frac12 \langle \mu(dy), \   {\rm Tr}[ \partial_{y}^2 \delta_\mu U(t,x,\mu; y)( \Sigma  +\Sigma_0)]\rangle \nonumber \\
& + \langle \mu(dy),   {\rm Tr}[ \partial_{xy} \delta_\mu U(t, x, \mu; y) \Sigma_0] \rangle   \notag  \\
& +\frac12 \langle \mu^{\otimes 2} (dydz), {\rm Tr}[ \partial_{yz}  \delta_{\mu\mu} U(t, x, \mu; y,z) \Sigma_0 ]     \rangle    \notag   \\
 & + \langle \mu(dy) , \   \delta_\mu L^* (t,y,\mu; x) - \delta_\mu L^*(t,y,\mu; y) \rangle \nonumber  \\
 & + \langle \mu(dy) , \  \partial_y \delta_\mu \overline U(t,\mu;y)    [\delta_\mu f^* (t,y,\mu; x) - \delta_\mu f^*(t,y,\mu; y)] \rangle \nonumber \\
  &+   \frac{1}{2}\langle \mu(dy),    {\rm Tr}\{ [\partial_{yz}
\delta_{\mu\mu}
\overline U(t,\mu;y, z)]|_{z=y}\Sigma\}\rangle,\notag
\end{align}
where $\overline U$ is given by \eqref{defUUb}.
\end{lemma}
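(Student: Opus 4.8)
The plan is to trace the inequality back to the minimizing property of $\phi$ that is already built into the master equation \eqref{MEVwoCN} of $V$, so that the lemma reduces to an algebraic identity plus a sign argument. First I would differentiate the representation \eqref{defUU} in $x$. Since $M(t,\mu)$ and $\langle\mu(dy),\delta_\mu V(t,y,\mu;y)\rangle$ are $x$-independent, and $\partial_x$ passes inside $\langle\mu(dy),\delta_\mu V(t,y,\mu;x)\rangle$ by dominated convergence (justified by the polynomial bound on $\partial_x\delta_\mu V$ in \eqref{agc}), this yields
\begin{align*}
U_x(t,x,\mu)=V_x(t,x,\mu)+\langle\mu(dy),\partial_x\delta_\mu V(t,y,\mu;x)\rangle .
\end{align*}
The right-hand side is exactly the row vector that multiplies $f(x,u^i,\mu)$ in the definition of $\Phi(t,x,u^i,\mu,V(\cdot))$, hence
\begin{align*}
\Phi(t,x,u^i,\mu,V(\cdot))=U_x(t,x,\mu)\,f(x,u^i,\mu)+L(x,u^i,\mu).
\end{align*}

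Next I would use that $\phi(t,x,\mu)$ attains $\min_{u^i}\Phi(t,x,u^i,\mu,V(\cdot))$ by construction. Writing $f^*(t,x,\mu)=f(x,\phi(t,x,\mu),\mu)$ and $L^*(t,x,\mu)=L(x,\phi(t,x,\mu),\mu)$, the identity above immediately gives, for every $u\in\mathbb{U}$,
\begin{align*}
U_x(t,x,\mu)\,f(x,u,\mu)+L(x,u,\mu)\ \ge\ U_x(t,x,\mu)\,f^*(t,x,\mu)+L^*(t,x,\mu).
\end{align*}

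Finally, by Theorem \ref{theorem:U} the function $U$ of \eqref{defUU} satisfies \eqref{MEUa} with the equality sign. On the right-hand side of \eqref{MEUa}, the control of the representative agent at the running state $x$ enters only through the two terms $U_x(t,x,\mu)f^*(t,x,\mu)$ and $L^*(t,x,\mu)$; every remaining term involves either no control or the population law $\phi$ evaluated at the dummy integration variables $y,z$ (inside $f^*(t,y,\mu)$, $\delta_\mu f^*$, $\delta_\mu L^*$, $\partial_{yz}\delta_{\mu\mu}\overline U$, etc.) and so stays unchanged. Replacing $U_x f^*+L^*$ by the no-smaller quantity $U_x f(x,u,\mu)+L(x,u,\mu)$ in \eqref{MEUa}, while keeping every other term verbatim, turns the identity $0=\cdots$ into the inequality $0\le\cdots$, which is precisely \eqref{MEU}. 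I expect no genuine obstacle here: the only delicate point is the interchange of $\partial_x$ with the $\mu$-integral, which is routine under the growth bounds \eqref{agc}; the substance of the lemma is the recognition that $U_x$ is exactly the $f$-coefficient of $\Phi$, so that the optimality selection of $\phi$ for $V$ transfers to the equation for $U$.
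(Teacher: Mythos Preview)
Your proposal is correct and follows essentially the same route as the paper's own proof: compute $U_x$ from \eqref{defUU}, recognize it as the $f$-coefficient in $\Phi$, invoke the minimizer property of $\phi$ to get $U_xf+L\ge U_xf^*+L^*$, and then replace these two terms in the identity \eqref{MEUa} (supplied by Theorem~\ref{theorem:U}) to obtain the inequality \eqref{MEU}. The only point you flag as delicate, passing $\partial_x$ through the $\mu$-integral via dominated convergence under \eqref{agc}, is exactly the justification the paper gives.
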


\begin{proof}
There are only two  terms within \eqref{MEU} that depend on $u$.
By \eqref{defUU}, we have
\begin{align}
&U_x(t, x, \mu)= V_x(t,x,\mu) + \langle \mu(dy) , \partial_x\delta_\mu V (t, x, \mu;y)\rangle,
\end{align}
where differentiation $\partial_x  $ can go inside the integral by dominated convergence.
 We have
\begin{align*}
& U_x(t,x,\mu) f(x, u, \mu)+L(x,u,\mu)\\
& \quad =[ V_x(t,x,\mu) + \langle \mu(dy) , \partial_x\delta_\mu V (t, x, \mu;y)\rangle ]f(x,u,\mu)  +L(x,u,\mu)  \\
&\quad \ge [ V_x(t,x,\mu) + \langle \mu(dy) , \partial_x\delta_\mu V (t, x, \mu;y)\rangle ]f^*(t,x,\mu)
  +L^*(t,x,\mu),
\end{align*}
where the inequality is due to the choice of $\phi$ in \eqref{optu_noCN}. On the other hand, $U$ satisfies \eqref{MEUa} by Theorem \ref{theorem:U}.
The lemma follows from \eqref{MEUa} and the above inequality.
\end{proof}

\section{Person-by-person optimality}
\label{sec:PO}

\begin{assumption} \label{assum:Lip}
$\phi(t, x, \mu)$ in the solution pair $(V, \phi)$ for \eqref{MEVwoCN} is  Lipschitz continuous in $(x, \mu)\in\mathbb{R}^n\times {\cal P}_2(\mathbb{R}^n)$,
 uniformly with respect to $t$.
\end{assumption}

\begin{assumption}\label{assum:X0}
There exists a constant $C_X$ such that
$\sup_N \max_{1\le i\le N} \EE| X_0^i|^2
\le C_X$.
\end{assumption}

Under the standing assumption (A1) and Assumptions \ref{assum:meV} and \ref{assum:Lip},  $f^*(t, x, \mu)$ is Lipschitz continuous in $(x, \mu)$ with linear growth, i.e., for some constant $C_{f,\phi}$,
\begin{align}
&|f^*(t,x, \mu)-f^*(t,y,\nu)|\le C_{f,\phi}(|x-y|+W_2(\mu, \nu)), \label{fLipC} \\
&|f^*(t,x, \mu)|\le C_{f,\phi} (1+|x|+\langle|y| \rangle_\mu).
\end{align}

\begin{proposition} \label{prop:ex2}
Under Assumptions \ref{assum:meV}, \ref{assum:flg}, \ref{assum:meM},  \ref{assum:Lip} and \ref{assum:X0}, for $(X_t^1, \cdots, X_t^N)$ given by \eqref{allXphi0}, we have
$$
\sup_N \max_{1\le i\le N} \sup_{0\le t\le T}\EE|X_t^i|^2\le C_{T,X},
$$
where the constant $C_{T,X}$ only depends on $(T, C_X, C_{f,\phi}, \Sigma, \Sigma_0)$.
\end{proposition}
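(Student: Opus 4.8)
The plan is to establish the uniform second-moment bound by a standard Gr\"onwall argument applied to $m_i(t) \coloneqq \EE|X_t^i|^2$, exploiting the linear growth of $f^*$ in $(x,\mu)$ together with the exchangeability-type structure of the system. First I would apply It\^o's formula to $|X_t^i|^2$ under the closed-loop dynamics \eqref{allXphi0}, giving
\[
\EE|X_t^i|^2 = \EE|X_0^i|^2 + \EE\int_0^t \Big( 2 (X_s^i)^T f^*(s, X_s^i, \mu_s^{-i}) + {\rm Tr}(\Sigma+\Sigma_0)\Big)\,ds.
\]
Using the linear growth bound $|f^*(s,x,\mu)| \le C_{f,\phi}(1+|x|+\langle |y|\rangle_\mu)$ and the elementary inequality $2a^Tb \le |a|^2 + |b|^2$, I would bound the drift contribution by a constant multiple of $1 + \EE|X_s^i|^2 + \EE\langle |y|\rangle_{\mu_s^{-i}}^2$. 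The only nontrivial point is the mean-field term: by Jensen's inequality $\langle |y|\rangle_{\mu_s^{-i}}^2 \le \langle |y|^2\rangle_{\mu_s^{-i}} = \frac{1}{N-1}\sum_{j \ne i}|X_s^j|^2$, so $\EE\langle |y|\rangle_{\mu_s^{-i}}^2 \le \frac{1}{N-1}\sum_{j\ne i}\EE|X_s^j|^2 \le \max_{1\le j\le N}\EE|X_s^j|^2$.

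Next I would pass to the worst-case quantity $m(t) \coloneqq \max_{1\le i\le N}\EE|X_t^i|^2$. From the previous step, for every $i$,
\[
\EE|X_t^i|^2 \le C_X + C_1\int_0^t \big(1 + \EE|X_s^i|^2 + m(s)\big)\,ds \le C_X + C_1 T + 2C_1\int_0^t m(s)\,ds,
\]
where $C_1$ depends only on $C_{f,\phi}$ and ${\rm Tr}(\Sigma+\Sigma_0)$. Since the right-hand side is independent of $i$, taking the maximum over $i$ gives $m(t) \le (C_X + C_1 T) + 2C_1\int_0^t m(s)\,ds$. A preliminary finiteness check — that $m(t) < \infty$ on $[0,T]$ — follows from the standard well-posedness and moment estimates for SDEs with Lipschitz coefficients of linear growth (Assumption \ref{assum:Lip} makes $f^*$ Lipschitz in $(x,\mu)$), which justifies the It\^o computation and makes Gr\"onwall applicable. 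Then Gr\"onwall's inequality yields $m(t) \le (C_X + C_1 T)e^{2C_1 t} \le (C_X + C_1 T)e^{2C_1 T} \eqqcolon C_{T,X}$ for all $t\in[0,T]$, and this bound is uniform in $N$ and in $i$, as claimed.

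The main obstacle — really the only subtlety worth flagging — is handling the empirical-measure argument $\mu_s^{-i}$ cleanly: one must avoid any circularity by first establishing finiteness of all moments via SDE well-posedness before running Gr\"onwall, and one must use the Jensen step to convert $\langle|y|\rangle_\mu$ into an average of the $\EE|X_s^j|^2$ so that the coupling closes at the level of the single scalar function $m(t)$. Everything else is routine: the constants are explicit, no moment beyond the second is needed because $f^*$ has linear (not superlinear) growth and the noise coefficients are constant, and the common noise $W^0$ enters only through $\Sigma_0$ in the trace term, contributing a bounded constant.
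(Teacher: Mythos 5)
Your proof is correct and follows essentially the same route as the paper's: It\^o's formula for $|X_t^i|^2$, the linear-growth bound on $f^*$ combined with Jensen to convert $\langle |y|\rangle_{\mu_s^{-i}}^2$ into $\frac{1}{N-1}\sum_{j\ne i}\EE|X_s^j|^2$, then maximizing over $i$ and closing the loop with Gr\"onwall. (You also correctly carry the factor $2$ from It\^o's formula, which the paper's displayed equation drops as a minor typo, and you explicitly flag the preliminary finiteness of $m(t)$ needed to invoke Gr\"onwall -- a point the paper handles implicitly via SDE well-posedness.)
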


\begin{proof}
Consider the closed-loop system
\begin{align}
dX_t^i =f^*(t,X_t^i, \mu^{-i}_t)dt + \sigma dW^i_t +\sigma_0 dW^0_t, \quad 1\le i\le N.\label{sdexf5}
\end{align}
Take $\mu_1= \frac{1}{k}\sum_{j=1}^k \delta_{x^j}$ and $\mu_2= \frac{1}{k}\sum_{j=1}^k \delta_{y^j} $ with $x^j,y^j\in \mathbb{R}^n$. Taking a particular coupling of $\mu_1$ and $\mu_2$  as the distribution
assigning probability $1/k$ at each point $(x^j,y^j)\in \mathbb{R}^{2n}$, we obtain $W_2(\mu_1, \mu_2)
\le (\frac{1}{k}\sum_{j}|x^j-y^j|^2)^{1/2}\le \sum_j |x^j-y^j|$. So in view of \eqref{fLipC},  $f^*(t, X_t^i, \mu_t^{-i})$ is Lipschitz continuous in $(X_t^1, \cdots, X_t^N)$
ensuring that the SDE system \eqref{sdexf5} has a unique solution on $[0,T]$.
Applying It\^o's formula to \eqref{sdexf5} and next taking expectation, we have
\begin{align}
\EE|X_t^i|^2 &= \EE|X_0^i|^2 + \EE\int_0^t[ X_s^{iT}f^*(s, X_s^i, \mu_s^{-i})+   {\rm Tr} (\Sigma+\Sigma_0) ]ds\notag \\
   &\le \EE|X_0^i|^2+ T {\rm Tr} (\Sigma+\Sigma_0) +
  C_{f,\phi} \EE\int_0^t |X_s^i| \Big(1+|X_s^i| + \frac{1}{N-1}\sum_{k\ne i} |X^k_s|\Big)ds\notag\\
  & \le \EE|X_0^i|^2+ {T} {\rm Tr} (\Sigma+\Sigma_0) +  C_1 \int_0^t \Big(1+\EE|X_s^i|^2+ \frac{1}{N-1}\sum_{k\ne i}\EE |X_s^k|^2\Big)ds. \notag
\end{align}
Denote $z_t=\max_{1\le i\le N} \EE|X_t^i|^2$. We obtain
$
z_t\le C_2+C_1\int_0^t (1+ 2 z_s)ds.
$
By Gr\"onwall's inequality, we obtain the desired estimate
with $C_{T,X}$ as specified.
\end{proof}

For any fixed constant $K_0>0$,
let
${\cal U}_{\cal F^N}^{K_0}$
consist of all $ \mathbb{R}^{n_1}$-valued processes $u^1_t\coloneqq u^1(t, \omega)$ that satisfy the conditions: (i) $u^1(t,\omega)$ is adapted to the filtration ${\mathcal F}^N_t\coloneqq\sigma(X_0^k,W_s^k,\\ W_s^0, k=1, \cdots, N, s\le t)$, (ii)
$\EE\int_0^T|u^{1} (t, \omega)|^2dt\le K_0.$

\begin{remark}
Given $\hat {\mathbf u}^{-1}$,
if $u^1(t, x_1, \cdots, x_N)$ is a Lipschitz feedback control law yielding  a unique closed-loop state process $(X_t^1, \cdots, X_t^N)$, we may identify such a control law as the process $u^1(t, X_t^1, \cdots, X_t^N)$, which is adapted to the filtration ${\mathcal F}_t^N$.
\end{remark}

Denote $\hat u_t^1= \phi(t, X_t^1, \mu_t^{-1})$ in \eqref{allXphi0}. Then by Proposition \ref{prop:ex2}, $\hat u_t^1$ belongs to $ {\cal U}_{\cal F^N}^{K_0}$ if $K_0$ is sufficiently large.

\begin{theorem}[Asymptotic person-by-person optimality]
\label{theorem:pbpo}
 Under Assumptions \ref{assum:meV}, \ref{assum:flg}, \ref{assum:meM}, \ref{assum:Lip}   and \ref{assum:X0}, for \eqref{X1u}--\eqref{Xkuhat} and any fixed $K_0>0$, we have
$$
J^{(N)}_{\rm soc} (\hat u^1(\cdot) , \hat u^2(\cdot), \cdots, \hat u^N(\cdot))\le  \inf_{u^1(\cdot)\in {\cal U}_{\cal F^N}^{K_0}}J^{(N)}_{\rm soc} (u^1(\cdot) , \hat u^2(\cdot), \cdots, \hat u^N(\cdot))+\epsilon_N,
$$
where $\epsilon_N =O(1/N)$ and $J^{(N)}_{\rm soc}$ is defined by \eqref{jsocN}.
\end{theorem}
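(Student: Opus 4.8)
The plan is to compare the social cost under the candidate control law $\hat{\mathbf u}$ with the social cost under an arbitrary admissible perturbation $(u^1, \hat u^2, \dots, \hat u^N)$, using the instrumental value function $U$ together with $\overline U$ as a near-martingale certificate. First I would fix $u^1(\cdot)\in{\cal U}_{\cal F^N}^{K_0}$ and write the resulting state process via \eqref{X1u}--\eqref{Xkuhat}, noting that it suffices to bound the perturbed social cost from below by $U^N_{\rm soc}(0,X_0^1,\mu^{-1}_0)-\epsilon_N$, since $U^N_{\rm soc}(0,\cdot)=J^{(N)}_{\rm soc}(\hat{\mathbf u})$ by \eqref{JNphi}--\eqref{U1nrep} and the definition of $\overline U$. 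Using the decomposition $U^N_{\rm soc}= U^N + (N-1)\overline U$ from \eqref{U1nrep}, I would further replace $U^N$ by its $N\to\infty$ limit $U$ given by \eqref{defUU}, incurring an $O(1/N)$ error (this replacement, i.e.\ the quantitative rate $|U^N - U|=O(1/N)$ on the relevant moment-bounded set, is presumably established just before this theorem or in an appendix; I would invoke it together with Proposition \ref{prop:ex2} and the analogous moment bound for the perturbed system, which follows from Assumption \ref{assum:X0}, the linear growth in (A1)--(A2), and $\EE\int_0^T|u^1|^2\,dt\le K_0$).

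Next I would apply It\^o's formula to $t\mapsto U(t,X_t^1,\mu_t^{-1}) + (N-1)\overline U(t,\mu_t^{-1})$ along the perturbed dynamics. The point is that $\mu_t^{-1}$ evolves as an empirical measure of $N-1$ agents each running $f^*$ plus common noise $\sigma_0 dW^0_t$; the It\^o expansion in the measure argument (using $\delta_\mu$, $\delta_{\mu\mu}$ and the common-noise cross terms, as in the heuristic derivations pointed to in appendices C--D) reproduces exactly the terms on the right-hand side of \eqref{MEUa} and \eqref{MEUb}, \emph{up to} discretization errors of order $O(1/N)$ coming from finite-$N$ corrections (e.g.\ replacing $\mu_t^{-1}$ by $\mu_t^{-1}\pm\frac1N\delta$, Taylor remainders in the measure derivative, and the difference between $\frac1{N-1}$ and $\frac1N$ normalizations). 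For the $x^1$-drift I would use that agent ${\cal A}_1$ runs the \emph{general} control $u^1$, so the drift picks up $U_x\,f(X^1,u^1,\mu^{-1}) + L(X^1,u^1,\mu^{-1})$ plus the measure cross-terms, and here Lemma \ref{lemma:optimizer} gives precisely that this expression is $\ge 0$ after adding $\partial_t U$ and the remaining $\overline U$-generated terms. Integrating from $0$ to $T$, taking expectations, using the terminal conditions \eqref{UTm} (which matches $\sum_i g(X_T^i,\mu_T^{-i})$ up to the usual empirical-measure $O(1/N)$ correction) and the running cost identity, I would arrive at
\[
\EE\Big[\sum_{i=1}^N\int_0^T L(X_t^i,u_t^i,\mu_t^{-i})\,dt + \sum_{i=1}^N g(X_T^i,\mu_T^{-i})\Big] \ge U(0,X_0^1,\mu_0^{-1}) + (N-1)\overline U(0,\mu_0^{-1}) - O(1/N),
\]
i.e.\ $J^{(N)}_{\rm soc}(u^1,\hat{\mathbf u}^{-1})\ge J^{(N)}_{\rm soc}(\hat{\mathbf u}) - \epsilon_N$ with $\epsilon_N=O(1/N)$.

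The delicate point — and the main obstacle — is controlling the accumulation of the $O(1/N)$ discretization errors uniformly over all $u^1\in{\cal U}_{\cal F^N}^{K_0}$. The social cost is $O(N)$, so an $O(1/N)$ relative error is only an $O(1)$ absolute error; to land at the claimed $\epsilon_N=O(1/N)$ \emph{absolute} regret, the per-step corrections must genuinely be $O(1/N^2)$ in a sense that sums to $O(1/N)$, which is exactly why the macroscopic piece $(N-1)\overline U$ had to be peeled off first (so that only the \emph{residual} $U$, not the $O(N)$ aggregate, is differentiated with error). Making this rigorous requires: (a) uniform-in-$N$ second-moment bounds for the perturbed state, with constants depending on $K_0$ but not $N$ (via Gr\"onwall as in Proposition \ref{prop:ex2}, now with the extra $|u^1|^2$ term integrable over $[0,T]$); (b) the regularity of $U$, $\overline U$ from Theorems \ref{theorem:U} and the $\overline U$-equation theorem, plus the growth bounds in \eqref{agc} and the class ${\cal C}_V$, to ensure all the It\^o coefficients and their measure-Taylor remainders are dominated by polynomials in the states; and (c) a careful bookkeeping that the sum over $i$ of the finite-$N$ corrections in $\mu_t^{-i}$ versus the limiting mean field contributes $O(1)\cdot O(1/N) = O(1)$ terms that must, by the construction of $\overline U$ and $M$, cancel at leading order, leaving only $O(1/N)$. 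I would organize this last cancellation as a lemma: along the closed-loop dynamics \eqref{allXphi0}, $\EE[U^N_{\rm soc}(0,X_0^1,\mu_0^{-1})] - \EE[\,U(0,X_0^1,\mu_0^{-1}) + (N-1)\overline U(0,\mu_0^{-1})\,] = O(1/N)$, whose proof reuses the same It\^o expansion but in the \emph{reverse} direction (now $U^N_{\rm soc}$ is exactly the social cost, with equality in Lemma \ref{lemma:optimizer} since $u^1=\phi$), so subtracting the two expansions makes the dominant terms telescope and only the higher-order remainders survive.
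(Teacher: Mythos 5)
Your approach matches the paper's: apply It\^o's formula to $U(t,X_t^1,\mu_t^{-1}) + (N-1)\overline U(t,\mu_t^{-1})$ along the perturbed dynamics, invoke Lemma~\ref{lemma:optimizer} to get a one-sided drift inequality that becomes equality at $u^1=\phi$, and control the finite-$N$ remainders with second-moment bounds uniform over $u^1\in\mathcal{U}^{K_0}_{\mathcal{F}^N}$ (the paper's Lemmas~\ref{lemma:Uintrep}, \ref{lemma:xi}, \ref{lemma:xm2}, assembled in Theorem~\ref{theorem:Jlb}). The one point to clean up: the intermediate claim $|U^N - U| = O(1/N)$ is neither established in the paper nor needed, and pursuing it would introduce a genuine extra difficulty (a quantitative propagation-of-chaos estimate for the finite-$N$ value function). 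The paper never uses $U^N$ in the rigorous argument; $U^N$ appears only as a heuristic scaffold in Appendix D. Instead, both $J^{(N)}_{\rm soc}(\hat{\mathbf u})$ and the lower bound for general $u^1$ are compared to the \emph{same} reference quantity $\EE\bigl[U(0,X_0^1,\mu_0^{-1}) + (N-1)\overline U(0,\mu_0^{-1})\bigr]$ by running the identical It\^o expansion twice: once with general $u^1$ (giving $\ge$ minus an $O(1/N)$ error), once with $u^1=\phi$ (giving equality, with the running/terminal-cost identities \eqref{LLxi}--\eqref{gxi} holding up to $O(1/N)$). Your final paragraph recognizes this, so the earlier $|U^N-U|$ step is a redundant detour you can simply drop.
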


\subsection{Proof of Theorem \ref{theorem:pbpo}}
Throughout this subsection, we suppose that all assumptions in Theorem \ref{theorem:pbpo} hold.
Using It\^o's formula
we obtain the following lemma.

\begin{lemma}\label{lemma:Uintrep}
Let the processes $(X^1_t, \cdots, X_t^N)$ be given by \eqref{X1u}--\eqref{Xkuhat}
with initial states  $(X^1_0, \cdots, X_0^N)$ at $t=0$ and $u_t^1\in{\mathcal U}_{{\mathcal F}^N}^{K_0}$ for some fixed $K_0>0$ as in Theorem \ref{theorem:pbpo} ensuring a unique strong solution to \eqref{X1u}--\eqref{Xkuhat}, and denote the derivatives $\delta_\mu U(t, x, \mu; y)$, $\delta_{\mu\mu} U(t, x, \mu; y,z)$, $\delta_\mu\overline U(t, \mu;y)$, $\delta_{\mu\mu}\overline U(t, \mu; y,z)$ for $U$ and $\overline U$ defined by \eqref{defUU} and \eqref{defUUb}, respectively. Then  we have
\begin{align}
&\EE [U(T, X_T^1, \mu_T^{-1})  - U(0, X_0^1, \mu_0^{-1} )] \label{Urep}\\
& = \EE \int_0^T\Big\{\partial_t U(t, X^1_t, \mu_t^{-1}) + U_{x}(t, X^1_t, \mu_t^{-1}) f(X_t^1, u^1_t, \mu_t^{-1}) \notag \\
&\qquad +\frac{1}{2} {\rm Tr}[ U_{xx}(t, X^1_t, \mu_t^{-1}) (\Sigma+\Sigma_0)]\notag \\
&\qquad  + \frac{1}{N-1}\sum_{j\ge 2} \partial_{y}\delta_\mu U(t, X^1_t,\mu_t^{-1}; X^j_t)f^*(t, X_t^j, \mu_t^{-j}) \notag \\
&\qquad + \frac{1}{2(N-1)}\sum_{j\ge 2 }{\rm Tr}[ \partial_{y}^2\delta_\mu U(t, X^1_t,\mu_t^{-1}; X^j_t)(\Sigma+\Sigma_0)] \notag\\
&\qquad  + \frac{1}{N-1}\sum_{j\ge 2  }{\rm Tr} [\partial_{xy}\delta_\mu U(t, X^1_t,\mu_t^{-1}; X^j_t)\Sigma_0 ]\notag\\
&\qquad  +   \frac{1}{2(N-1)^2}\sum_{j\ge 2,k\ge 2  } {\rm Tr[}\partial_{yz}
\delta_{\mu\mu}
U(t, X^1_t,\mu_t^{-1};X_t^k, X^j_t)\Sigma_0 ]\notag\\
&\qquad  +   \frac{1}{2(N-1)^2}\sum_{k\ge 2  }{\rm Tr[} \partial_{yz}
\delta_{\mu\mu}
U(t, X^1_t,\mu_t^{-1};X_t^k, X^k_t)\Sigma] \Big\}dt\notag
\end{align}
and
\begin{align}
&\EE [\overline U(T,  \mu_T^{-1})  - \overline U(0, \mu^{-1} )]\label{bUrep} \\
&\quad =\EE\int_0^T \Big\{ \partial_t \overline U(t, \mu_t^{-1})+ \frac{1}{N-1}\sum_{j\ge 2} \partial_{y}\delta_\mu \overline U(t, \mu_t^{-1}; X^j_t)f^*(t, X_t^j, \mu_t^{-j}) \notag \\
&\qquad + \frac{1}{2(N-1)}\sum_{j\ge 2 }{\rm Tr}[ \partial_{y}^2\delta_\mu \overline U(t, \mu_t^{-1}; X^j_t)(\Sigma+\Sigma_0) ]\notag \\
&\qquad+   \frac{1}{2(N-1)^2}\sum_{j\ge 2,k\ge 2  }{\rm Tr[} \partial_{yz}
\delta_{\mu\mu}
\overline U(t, \mu_t^{-1};X_t^k, X^j_t)\Sigma_0 ]\notag \\
&\qquad+   \frac{1}{2(N-1)^2}\sum_{k\ge 2  } {\rm Tr}[ \partial_{yz}
\delta_{\mu\mu}
\overline U(t,\mu_t^{-1};X_t^k, X^k_t)\Sigma]  \Big\}dt. \notag
\end{align}
\end{lemma}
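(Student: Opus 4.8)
The plan is to reduce both \eqref{Urep} and \eqref{bUrep} to the classical, finite-dimensional It\^o formula. Fix $N$ and write $\mu\coloneqq\frac{1}{N-1}\sum_{j\ge 2}\delta_{x^j}$. Because $\mu_t^{-1}$ depends only on the finitely many particles $X_t^2,\dots,X_t^N$, the processes of interest equal $\widetilde U(t,X_t^1,\dots,X_t^N)$ and $\widetilde{\overline U}(t,X_t^2,\dots,X_t^N)$, where $\widetilde U(t,x^1,\dots,x^N)\coloneqq U(t,x^1,\mu)$ and $\widetilde{\overline U}(t,x^2,\dots,x^N)\coloneqq\overline U(t,\mu)$. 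First I would record the standard empirical-projection identities (in the flat-derivative formalism of \cite{CDLL19}) relating the Euclidean derivatives of $\widetilde U$ to the measure derivatives of $U$: one has $\partial_t\widetilde U=\partial_tU(t,x^1,\mu)$, $\partial_{x^1}\widetilde U=U_x(t,x^1,\mu)$, $\partial_{x^1x^1}\widetilde U=U_{xx}(t,x^1,\mu)$, and, for $j,k\ge 2$,
\begin{align*}
\partial_{x^j}\widetilde U&=\tfrac{1}{N-1}\,\partial_y\delta_\mu U(t,x^1,\mu;x^j),\qquad
\partial_{x^1x^j}\widetilde U=\tfrac{1}{N-1}\,\partial_{xy}\delta_\mu U(t,x^1,\mu;x^j),\\
\partial_{x^jx^j}\widetilde U&=\tfrac{1}{N-1}\,\partial_y^2\delta_\mu U(t,x^1,\mu;x^j)+\tfrac{1}{(N-1)^2}\,\partial_{yz}\delta_{\mu\mu}U(t,x^1,\mu;x^j,x^j),\\
\partial_{x^jx^k}\widetilde U&=\tfrac{1}{(N-1)^2}\,\partial_{yz}\delta_{\mu\mu}U(t,x^1,\mu;x^j,x^k)\qquad(j\ne k),
\end{align*}
with the evident analogues for $\widetilde{\overline U}$ (no $x^1$ slot). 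The joint continuity of $\partial_tU$, $U_x$, $U_{xx}$, $\partial_y\delta_\mu U$, $\partial_y^2\delta_\mu U$, $\partial_{xy}\delta_\mu U$ and $\partial_{yz}\delta_{\mu\mu}U$ built into Definition \ref{def:vsol} and the class ${\cal C}_V$, which holds for $U$ defined by \eqref{defUU} in view of Theorem \ref{theorem:U} (and similarly for $\overline U$ defined by \eqref{defUUb}, whose measure derivatives are given by \eqref{oUdmu1}--\eqref{oUdmu3} in terms of those of $V$), then guarantees $\widetilde U,\widetilde{\overline U}\in C^{1,2}$ in the Euclidean sense, so the ordinary It\^o formula applies.

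Next I would apply It\^o's formula to $\widetilde U(t,X_t^1,\dots,X_t^N)$ along \eqref{X1u}--\eqref{Xkuhat}. The remaining inputs are the drifts -- $f(X_t^1,u_t^1,\mu_t^{-1})$ for $X^1$ and $f^*(t,X_t^j,\mu_t^{-j})$ for each $X^j$, $j\ge 2$ -- and the quadratic covariations, which, since $W^1,\dots,W^N$ are mutually independent and the common noise $W^0$ is independent of them, satisfy $d\langle X^i,X^j\rangle_t=(\Sigma\,\mathbf 1_{\{i=j\}}+\Sigma_0)\,dt$. Substituting the empirical-projection identities and collecting terms reproduces the integrand of \eqref{Urep}: the first-order term $\partial_{x^1}\widetilde U\cdot f$ gives $U_xf(X^1_t,u^1_t,\mu_t^{-1})$; the first-order terms $\partial_{x^j}\widetilde U\cdot f^*$, $j\ge 2$, give the $\partial_y\delta_\mu U$ sum with $f^*(t,X^j_t,\mu_t^{-j})$ appearing automatically; the $(1,1)$ second-order term gives $\frac12{\rm Tr}[U_{xx}(\Sigma+\Sigma_0)]$; the mixed $(1,j)$ and $(j,1)$ second-order terms, carrying only $\Sigma_0$, give the $\partial_{xy}\delta_\mu U$ sum; and the $j,k\ge 2$ second-order terms, upon inserting the covariance split together with the identities for $\partial_{x^jx^j}\widetilde U$ and for $\partial_{x^jx^k}\widetilde U$ ($j\ne k$), reorganize into (i) the $\partial_y^2\delta_\mu U$ sum weighted by $\Sigma+\Sigma_0$ (both the $\Sigma$- and $\Sigma_0$-diagonals contributing), (ii) the double sum $\frac{1}{2(N-1)^2}\sum_{j,k\ge 2}{\rm Tr}[\partial_{yz}\delta_{\mu\mu}U(t,X^1_t,\mu_t^{-1};X_t^j,X_t^k)\Sigma_0]$ pooling the $\Sigma_0$-weighted diagonal and off-diagonal contributions, and (iii) the $\Sigma$-weighted diagonal term $\frac{1}{2(N-1)^2}\sum_{j\ge 2}{\rm Tr}[\partial_{yz}\delta_{\mu\mu}U(t,X^1_t,\mu_t^{-1};X_t^j,X_t^j)\Sigma]$. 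Running the identical computation for $\widetilde{\overline U}$, with the $x^1$-derivatives deleted, yields \eqref{bUrep}.

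Finally I would discard the It\^o stochastic integrals by a martingale argument and then take expectations. Their integrands are, up to the fixed factors $\sigma$, $\sigma_0$ and $\frac{1}{N-1}$, built from $U_x$, the $\partial_y\delta_\mu U(t,X^1_t,\mu_t^{-1};X_t^j)$ and the analogous quantities for $\overline U$, each of which is dominated by $C\big(1+|X_t^1|+\max_{2\le j\le N}|X_t^j|\big)$ with $C$ independent of $t$, by the growth bounds \eqref{agc}, \eqref{vgc1} and the bounds defining ${\cal C}_M$. Since the coefficients of \eqref{X1u}--\eqref{Xkuhat} have at most linear growth in the state variables (using (A1), the linear growth of the Lipschitz feedback $u^1$, the linear growth of $f^*$, and Assumption \ref{assum:X0}), an a priori moment estimate carried out exactly as in Proposition \ref{prop:ex2} gives $\sup_{0\le t\le T}\EE|X_t^i|^2<\infty$ for each $i$; hence the integrands are square-integrable on $[0,T]$, the It\^o integrals are genuine martingales with zero mean, and taking expectations in the two expansions gives \eqref{Urep} and \eqref{bUrep}.

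The step I expect to be the main obstacle is the bookkeeping of the second-order terms -- in particular tracking how the common-noise covariance $\Sigma_0$ is spread over the $O(N^2)$ diagonal and off-diagonal particle pairs and coalesces into the single $\frac{1}{(N-1)^2}$ double sum, while only $\Sigma$ survives in the diagonal $\delta_{\mu\mu}$ term -- together with the more foundational task of justifying the empirical-projection differentiation identities and the consequent $C^{1,2}$ regularity of $\widetilde U$ and $\widetilde{\overline U}$ from the measure-derivative regularity postulated for $U$ and $V$. Once these are settled, the integrability and martingale estimates are routine.
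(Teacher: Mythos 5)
Your proposal is correct and follows essentially the same route as the paper: the paper directly invokes the It\^o formula along measure flows from the cited references, then integrates and takes expectation, while you derive that formula explicitly via the standard finite-dimensional empirical-projection trick (which is precisely how the cited formula is established in the literature). Your bookkeeping of the second-order terms, the split of the covariance $\Sigma\,\mathbf{1}_{\{i=j\}}+\Sigma_0$ into the $\partial_y^2\delta_\mu U$, double-sum $\partial_{yz}\delta_{\mu\mu}U\Sigma_0$, and diagonal $\partial_{yz}\delta_{\mu\mu}U\Sigma$ contributions, and the martingale/integrability argument all check out and reproduce the paper's displayed expansion verbatim.
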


\begin{proof}
For agent ${\mathcal A}_1$, by It\^o's formula involving the measure flow
$\{\mu^{-1}_t, 0\le t\le T\}$ (see e.g. \cite{CDLL19,CD18,C21}),  we have
{\allowdisplaybreaks
\begin{align*}
dU(t, X_t^1, \mu_t^{-1}) =&\ [\partial_t U(t, X^1_t, \mu_t^{-1}) + U_{x}(t, X^1_t, \mu_t^{-1}) f(X_t^1, u^1_t, \mu_t^{-1})] dt \\
& +\frac{1}{2} {\rm Tr}[ U_{xx}(t, X^1_t, \mu_t^{-1}) (\Sigma+\Sigma_0)] dt \\
&  +  U_{x}(t, X^1_t, \mu_t^{-1})(\sigma dW^1_t+ \sigma_0 dW_t^0)  \\
&+ \frac{1}{N-1}\sum_{j\ge 2} \partial_{y}\delta_\mu U(t, X^1_t,\mu_t^{-1}; X^j_t)f^*(t, X_t^j, \mu_t^{-j}) dt \\
& + \frac{1}{2(N-1)}\sum_{j\ge 2 }{\rm Tr}[\partial_{y}^2\delta_\mu U(t, X^1_t,\mu_t^{-1}; X^j_t)(\Sigma+\Sigma_0 )]dt \\
&  + \frac{1}{N-1}\sum_{j\ge 2  } \partial_{y}\delta_\mu U(t, X^1_t,\mu_t^{-1}; X^j_t)(\sigma dW^j_t + \sigma_0 dW_t^0 )   \\
&  + \frac{1}{N-1}\sum_{k\ge 2  }{\rm Tr} [\partial_{xy}\delta_\mu U(t, X^1_t,\mu_t^{-1}; X^k_t)\Sigma_0 ]dt\\
&+   \frac{1}{2(N-1)^2}\sum_{j\ge 2,k\ge 2  } \mbox{Tr}[\partial_{yz}
\delta_{\mu\mu}
U(t, X^1_t,\mu_t^{-1};X_t^k, X^j_t)\Sigma_0 ]dt\\
&+   \frac{1}{2(N-1)^2}\sum_{k\ge 2  }\mbox{Tr}[ \partial_{yz}
\delta_{\mu\mu}
U(t, X^1_t,\mu_t^{-1};X_t^k, X^k_t)\Sigma ]dt.
\end{align*}
}
Integrating  both sides on $[0,T]$ and taking expectation, we obtain \eqref{Urep}. The proof of \eqref{bUrep}  is similar and  uses  the following relation
\begin{align}
d\overline U(t, \mu_t^{-1}) = & \ \partial_t \overline U(t, \mu_t^{-1})dt+\frac{1}{N-1}\sum_{j\ge 2} \partial_{y}\delta_\mu \overline U(t, \mu_t^{-1}; X^j_t)f^*(t, X_t^j, \mu_t^{-j}) dt \\
& + \frac{1}{2(N-1)}\sum_{j\ge 2 }{\rm Tr}[ \partial_{y}^2\delta_\mu \overline U(t, \mu_t^{-1}; X^j_t)(\Sigma+\Sigma_0) ]dt \notag \\
&  + \frac{1}{N-1}\sum_{j\ge 2  } \partial_{y}\delta_\mu
\overline U(t, \mu_t^{-1}; X^j_t) (\sigma dW^j_t  +\sigma_0  dW^0_t  )   \notag \\
&+   \frac{1}{2(N-1)^2}\sum_{j\ge 2,k\ge 2  }\mbox{Tr} [\partial_{yz}
\delta_{\mu\mu}
\overline U(t, \mu_t^{-1};X_t^k, X^j_t)\Sigma_0 ]dt\notag \\
&+   \frac{1}{2(N-1)^2}\sum_{k\ge 2  } \mbox{Tr} [\partial_{yz}
\delta_{\mu\mu}
\overline U(t,\mu_t^{-1};X_t^k, X^k_t)\Sigma]dt.\notag
\end{align}
\end{proof}

To obtain the crucial lower bound in  Theorem \ref{theorem:Jlb} below, we need to slightly rewrite the integrands in \eqref{Urep} and \eqref{bUrep} to relate to  equations \eqref{MEUa} and \eqref{MEUb} for $U$ and $\overline U$, respectively.
We proceed to introduce the following error terms:
\begin{align*}
&\xi^a_1\coloneqq \frac{1}{N-1}\sum_{j\ge 2} \partial_{y}\delta_\mu U(t, X^1_t,\mu_t^{-1}; X^j_t)[f^*(t, X_t^j, \mu_t^{-j}) - f^*(t, X_t^j, \mu_t^{-1})],\\
& \xi_2^a\coloneqq
 \frac{1}{2(N-1)^2}\sum_{k\ge 2  }\mbox{Tr} [\partial_{yz}
\delta_{\mu\mu}
U(t, X^1_t,\mu_t^{-1};X_t^k, X^k_t)\Sigma],
\end{align*}
and
\begin{align*}
 \xi^b&\coloneqq  \sum_{j\ge 2} \partial_{y}\delta_\mu \overline U(t, \mu_t^{-1}; X^j_t)\Big\{f^*(t, X_t^j, \mu_t^{-j})- f^*(t, X_t^j, \mu_t^{-1})\\
  & \qquad \qquad\qquad - \frac{1}{N-1} [   \delta_\mu f^*(t, X_t^j, \mu_t^{-1}; X_t^1) - \delta_\mu f^*(t, X_t^j, \mu_t^{-1}; X_t^j)] \Big\}.
\end{align*}
Moreover,  for $2\le j\le N$, we denote
\begin{align*}
\xi^L_j&\coloneqq
L^*(t, X_t^j, \mu_t^{-j})- L^*(t, X^j_t, \mu_t^{-1})\\
&\qquad - \frac{1}{N-1}[ \delta_\mu L^*(t, X^j_t, \mu^{-1}_t; X_t^1)-\delta_\mu L^*(t, X^j_t, \mu^{-1}_t; X_t^j)],\\
\xi^g_j&\coloneqq
g(X_T^j, \mu_T^{-j})- g(X_T^j, \mu_T^{-1})\\
&\qquad- \frac{1}{N-1}[ \delta_\mu g( X^j_T, \mu^{-1}_T; X_T^1)-\delta_\mu g( X^j_T, \mu^{-1}_T; X_T^j)].
 \end{align*}

\begin{theorem}  \label{theorem:Jlb}
For \eqref{X1u}--\eqref{Xkuhat} and every control $u^1$ as in Theorem \ref{theorem:pbpo},
we  have
\begin{align}
J_{\rm soc}^{(N)}(u^1(\cdot), \hat {\mathbf u}^{-1}(\cdot))&\ge \EE[U(0, X_0^1, \mu_0^{-1}) +(N-1) \overline U(0, \mu_0^{-1} ) ]\\
&\quad+\EE \int_0^T \Big( \xi_1^a +\xi_2^a+ \xi^b+ \sum_{j=2}^N \xi^L_j\Big) dt  +\EE \sum_{j=2}^N \xi_j^g,\nonumber
\end{align}
where the equality holds if $u_t^1=\phi(t, X_t^1, \mu_t^{-1})$ for all $t\in [0,T]$ in \eqref{X1u} for agent ${\mathcal A}_1$.
\end{theorem}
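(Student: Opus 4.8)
The plan is to rewrite the social cost $J_{\rm soc}^{(N)}(u^1(\cdot),\hat{\mathbf u}^{-1}(\cdot))=J_1+\sum_{j=2}^N J_j$, where $J_1$ carries the running cost $L(X_t^1,u_t^1,\mu_t^{-1})$ and, for $j\ge 2$, $J_j$ carries $L^*(t,X_t^j,\mu_t^{-j})$, and then match every term against the two master equations \eqref{MEUa}--\eqref{MEUb} and the It\^o representations \eqref{Urep}--\eqref{bUrep}. I would first dispose of the terminal cost. Using the terminal conditions \eqref{UTm} and $\overline U(T,\mu)=\langle\mu,g\rangle$, the identity $\mu_T^{-1}=\frac1{N-1}\sum_{j\ge2}\delta_{X_T^j}$, and the definition of $\xi_j^g$, one checks the algebraic identity $U(T,X_T^1,\mu_T^{-1})+(N-1)\overline U(T,\mu_T^{-1})=g(X_T^1,\mu_T^{-1})+\sum_{j\ge2}\bigl(g(X_T^j,\mu_T^{-j})-\xi_j^g\bigr)$; after taking expectations this gives $\EE g(X_T^1,\mu_T^{-1})+\EE\sum_{j\ge2}g(X_T^j,\mu_T^{-j})=\EE[U(T,X_T^1,\mu_T^{-1})+(N-1)\overline U(T,\mu_T^{-1})]+\EE\sum_{j\ge2}\xi_j^g$. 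Invoking Lemma \ref{lemma:Uintrep} converts the terminal expectation on the right into $\EE[U(0,X_0^1,\mu_0^{-1})+(N-1)\overline U(0,\mu_0^{-1})]$ plus the time integral of the integrands in \eqref{Urep} and \eqref{bUrep}.

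The core step is to show that the sum of the four pieces $L(X_t^1,u_t^1,\mu_t^{-1})$, $\sum_{j\ge2}L^*(t,X_t^j,\mu_t^{-j})$, the $U$-integrand of \eqref{Urep} and $(N-1)$ times the $\overline U$-integrand of \eqref{bUrep} equals $\Lambda_t+\xi_1^a+\xi_2^a+\xi^b+\sum_{j\ge2}\xi_j^L$, where $\Lambda_t$ is the right-hand side of Lemma \ref{lemma:optimizer} evaluated at $(t,X_t^1,\mu_t^{-1},u_t^1)$ (recall that evaluating $\langle\mu_t^{-1},\cdot\rangle=\frac1{N-1}\sum_{j\ge2}$, and $\langle(\mu_t^{-1})^{\otimes2},\cdot\rangle=\frac1{(N-1)^2}\sum_{j,k\ge2}$, since $\mu_t^{-1}$ is empirical). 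To obtain this I would (i) subtract $(N-1)$ times \eqref{MEUb} (at $\mu=\mu_t^{-1}$) from the $\overline U$-integrand, which cancels the $\partial_t\overline U$, the $\partial_y^2\delta_\mu\overline U$ and the $\partial_{yz}\delta_{\mu\mu}\overline U\,\Sigma_0$ contributions and leaves $-\sum_{j\ge2}L^*(t,X_t^j,\mu_t^{-1})$, a drift remainder containing $f^*(t,X_t^j,\mu_t^{-j})-f^*(t,X_t^j,\mu_t^{-1})$, and the $\Sigma$-diagonal term $\frac1{2(N-1)}\sum_{k\ge2}{\rm Tr}[\partial_{yz}\delta_{\mu\mu}\overline U(t,\mu_t^{-1};X_t^k,X_t^k)\Sigma]$; and (ii) subtract $\Lambda_t$ from $L(X_t^1,u_t^1,\mu_t^{-1})$ plus the $U$-integrand, which cancels the $\partial_tU$, $U_xf$, $U_{xx}$, $L$, $\partial_y^2\delta_\mu U$, $\partial_{xy}\delta_\mu U$ and $\partial_{yz}\delta_{\mu\mu}U\,\Sigma_0$ contributions, produces $\xi_1^a$ from the drift mismatch $f^*(t,X_t^j,\mu_t^{-j})-f^*(t,X_t^j,\mu_t^{-1})$ and $\xi_2^a$ from the $\Sigma$-diagonal $U$-term, and subtracts the $\delta_\mu L^*$, $\partial_y\delta_\mu\overline U\,\delta_\mu f^*$ and $[\partial_{yz}\delta_{\mu\mu}\overline U]|_{z=y}\Sigma$ terms of $\Lambda_t$. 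Adding (i) and (ii): the two $\Sigma$-diagonal $\overline U$ terms cancel; $\sum_{j\ge2}L^*(t,X_t^j,\mu_t^{-j})$, $-\sum_{j\ge2}L^*(t,X_t^j,\mu_t^{-1})$ and the subtracted $\delta_\mu L^*$ term regroup as $\sum_{j\ge2}\xi_j^L$; and the $\partial_y\delta_\mu\overline U$ drift remainder together with the subtracted $\delta_\mu f^*$ term regroup as $\xi^b$; everything else cancels. Finally, since $U_x=V_x+\langle\mu(dy),\partial_x\delta_\mu V(t,y,\mu;x)\rangle$, the condition on $\phi$ in \eqref{optu_noCN} says precisely that $u\mapsto U_x(t,x,\mu)f(x,u,\mu)+L(x,u,\mu)$ is minimized at $u=\phi(t,x,\mu)$; hence $\Lambda_t\ge0$ by Lemma \ref{lemma:optimizer}, and $\Lambda_t=0$ when $u_t^1=\phi(t,X_t^1,\mu_t^{-1})$ because $\Lambda_t$ then coincides with the right-hand side of \eqref{MEUa}.

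Putting the two reductions together and dropping the nonnegative term $\EE\int_0^T\Lambda_t\,dt$ yields exactly the asserted inequality; and since $\EE\int_0^T\Lambda_t\,dt$ vanishes when $u_t^1=\phi(t,X_t^1,\mu_t^{-1})$ — at which point $L(X_t^1,u_t^1,\mu_t^{-1})=L^*(t,X_t^1,\mu_t^{-1})$ — the inequality becomes an equality in that case. The argument is a long but essentially mechanical bookkeeping exercise; the places that need genuine care are keeping the empirical-measure evaluations $\langle\mu_t^{-1},\cdot\rangle$ and $\langle(\mu_t^{-1})^{\otimes2},\cdot\rangle$ aligned with the finite sums of Lemma \ref{lemma:Uintrep}, distinguishing the $\Sigma$- and $\Sigma_0$-traces and the diagonal $\Sigma$-terms (which are exactly the sources of $\xi_2^a$ and of the $\overline U$-diagonal cancellation), and verifying that the $\delta_\mu g$ terms in \eqref{UTm} reproduce the $\xi_j^g$ after passing to $\mu_T^{-1}$. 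I expect the careful tracking of these finite-$N$ artifacts — precisely the quantities that will later be estimated as $O(1/N)$ — to be the main obstacle, rather than any conceptual difficulty.
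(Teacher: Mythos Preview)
Your proposal is correct and follows essentially the same route as the paper. The paper organizes the computation by introducing intermediate quantities $\Theta^a$, $\Theta^b$, $\bar\Theta_1^b$, $\Theta_2^b$, $\Theta_1^b$ and $\Upsilon$ (where $\Upsilon$ plays exactly the role of your $\Lambda_t$ minus the $L$ and $\delta_\mu L^*$ terms), but the substance is identical: both arguments combine the It\^o representations of Lemma \ref{lemma:Uintrep}, subtract the master equation \eqref{MEUb} to extract $-(N-1)\langle\mu_t^{-1},L^*\rangle$, invoke Lemma \ref{lemma:optimizer} for the key inequality, and use the definitions of $\xi_j^L$, $\xi_j^g$ to reconstruct $\sum_{j\ge2}L^*(t,X_t^j,\mu_t^{-j})$ and $\sum_j g(X_T^j,\mu_T^{-j})$ from their $\mu_t^{-1}$-based approximations.
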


\begin{proof}
Denote
\begin{align*}
\Theta^a \coloneqq \ &\partial_t U(t, X^1_t, \mu_t^{-1}) + U_{x}(t, X^1_t, \mu_t^{-1}) f(X_t^1, u^1_t, \mu_t^{-1})  \\
& +\frac{1}{2} {\rm Tr}[ U_{xx}(t, X^1_t, \mu_t^{-1}) (\Sigma +\Sigma_0) ]\\
&+ \frac{1}{N-1}\sum_{j\ge 2} \partial_{y}\delta_\mu U(t, X^1_t,\mu_t^{-1}; X^j_t)f^*(t, X_t^j, \mu_t^{-1})  \\
& + \frac{1}{2(N-1)}\sum_{j\ge 2 }{\rm Tr}[ \partial_{y}^2\delta_\mu U(t, X^1_t,\mu_t^{-1}; X^j_t)(\Sigma+\Sigma_0 )] \\
&  + \frac{1}{N-1}\sum_{j\ge 2  }{\rm Tr}[ \partial_{xy}\delta_\mu U(t, X^1_t,\mu_t^{-1}; X^j_t)\Sigma_0 ]\\
&+   \frac{1}{2(N-1)^2}\sum_{j\ge 2,k\ge 2  } {\rm Tr}[\partial_{yz}
\delta_{\mu\mu}
U(t, X^1_t,\mu_t^{-1};X_t^k, X^j_t)\Sigma_0 ], \notag
\end{align*}
which consists of the first six lines  in the integrand of \eqref{Urep} but   replaces $\mu_t^{-j}$  by $\mu_t^{-1}$ within $f^*$ on the third line.
Then from Lemma \ref{lemma:Uintrep}, we have the relation
\begin{align}
&\EE [U(T, X_T^1, \mu_T^{-1})  - U(0, X_0^1, \mu_0^{-1} )] = \EE \int_0^T (\Theta^a +\xi^a_1+\xi^a_2)(t)  dt. \notag
\end{align}

 Similarly, based on  \eqref{bUrep}, we define
\begin{align*}
\Theta^b\coloneqq
  & (N-1)\Big\{ \partial_t \overline U(t, \mu_t^{-1})+ \frac{1}{N-1}\sum_{j\ge 2} \partial_{y}\delta_\mu \overline U(t, \mu_t^{-1}; X^j_t)f^*(t, X_t^j, \mu_t^{-1})  \\
& + \frac{1}{2(N-1)}\sum_{j\ge 2 }{\rm Tr}[ \partial_{y}^2\delta_\mu \overline U(t, \mu_t^{-1}; X^j_t)(\Sigma+\Sigma_0) ]\\
&+   \frac{1}{2(N-1)^2}\sum_{j\ge 2,k\ge 2  }{\rm Tr}[ \partial_{yz}
\delta_{\mu\mu}
\overline U(t, \mu_t^{-1};X_t^k, X^j_t)\Sigma_0 ]
\Big
\}, \notag
\end{align*}
and
\begin{align*}
& \bar \Theta^b_1 \coloneqq \sum_{j\ge 2} \partial_{y}\delta_\mu \overline U(t, \mu_t^{-1}; X^j_t)[f^*(t, X_t^j, \mu_t^{-j})- f^*(t, X_t^j, \mu_t^{-1})], \\
& \Theta_2^b \coloneqq  \frac{1}{2(N-1)} \sum_{k\ge 2 }{\rm Tr}[ \partial_{yz}\delta_{\mu\mu} \overline U(t, \mu_t^{-1}; X^k_t, X^k_t)\Sigma ].
\end{align*}
Using \eqref{bUrep},   we obtain
\begin{align*}
&(N-1)\EE [\overline U(T,  \mu_T^{-1})  - \overline U(0, \mu_0^{-1} )] = \EE \int_0^T(\Theta^b +\bar\Theta^b_1 + \Theta_2^b )(t) dt.
\end{align*}
Subsequently, we obtain
\begin{align}
&\EE [U(T, X_T^1, \mu_T^{-1})+(N-1) \overline U(T,  \mu_T^{-1})] \label{5UUb} \\
&\qquad   - \EE [U(0, X_0^1, \mu_0^{-1}) +(N-1) \overline U(0, \mu_0^{-1} ) ]\notag \\
&\qquad =\EE \int_0^T (\Theta^a+\xi_1^a+\xi_2^a+\Theta^b+ \bar \Theta^b_1+ \Theta^b_2) dt.\notag
\end{align}
We need to further decompose  $\bar \Theta^b_1$ as the sum  of a main component and a small error term.
For this purpose, denote
\begin{align*}
& \Theta^{b}_1 \coloneqq \frac{1}{N-1} \sum_{j\ge 2} \partial_{y}\delta_\mu \overline U(t, \mu_t^{-1}; X^j_t) [   \delta_\mu f^*(t, X_t^j, \mu_t^{-1}; X_t^1) - \delta_\mu f^*(t, X_t^j, \mu_t^{-1}; X_t^j) ].
 \end{align*}
Then we have
$\bar \Theta_1^b=\Theta_1^b+   \xi^b   ,$
where $\xi^b$ is viewed as a  small error term.
By \eqref{MEUb}, we have
 \begin{align*}
\Theta^{b}=&  -(N-1)\langle \mu_t^{-1}(dy), L^*(t, y, \mu_t^{-1}) \rangle.
\end{align*}

For \eqref{5UUb}, denote
$$
\Theta^*\coloneqq \Theta^a+\xi_1^a+\xi_2^a+\Theta^b+ \bar \Theta^b_1+ \Theta^b_2.
$$
Now we have
\begin{align*}
  \Theta^*
=& \Theta^a+\Theta^b+  \Theta^b_1+ \Theta^b_2+ \xi_1^a+\xi_2^a+\xi^b  \\
 =&\ \Big\{ \partial_t U(t, X^1_t, \mu_t^{-1}) + U_{x}(t, X^1_t, \mu_t^{-1}) f(X_t^1, u^1_t, \mu_t^{-1})  \\
& +\frac{1}{2} {\rm Tr}[ U_{xx}(t, X^1_t, \mu_t^{-1}) (\Sigma+ \Sigma_0)] \\
&+ \frac{1}{N-1}\sum_{j\ge 2} \partial_{y}\delta_\mu U(t, X^1_t,\mu_t^{-1}; X^j_t)f^*(t, X_t^j, \mu_t^{-1})  \\
& + \frac{1}{2(N-1)}\sum_{j\ge 2 }{\rm Tr}[ \partial_{y}^2\delta_\mu U(t, X^1_t,\mu_t^{-1}; X^j_t)(\Sigma+\Sigma_0) ] \\
&  + \frac{1}{N-1}\sum_{j\ge 2  }{\rm Tr}[ \partial_{xy}\delta_\mu U(t, X^1_t,\mu_t^{-1}; X^j_t)\Sigma_0 ]\\
&+   \frac{1}{2(N-1)^2}\sum_{j\ge 2,k\ge 2  } {\rm Tr}[\partial_{yz}
\delta_{\mu\mu}
U(t, X^1_t,\mu_t^{-1};X_t^k, X^j_t)\Sigma_0 ]\\
& + \frac{1}{N-1} \sum_{j\ge 2} \partial_{y}\delta_\mu \overline U(t, \mu_t^{-1}; X^j_t) [   \delta_\mu f^*(t, X_t^j, \mu_t^{-1}; X_t^1)\\
&\qquad\qquad\qquad\qquad\qquad\qquad\qquad - \delta_\mu f^*(t, X_t^j, \mu_t^{-1}; X_t^j) ]\\
& + \frac{1}{2(N-1)}\sum_{k\ge 2  } {\rm Tr}[ \partial_{yz}
\delta_{\mu\mu}
\overline U(t,\mu_t^{-1};X_t^k, X^k_t)\Sigma]\Big\} \qquad (\eqqcolon\Upsilon)
 \notag \\
 & -(N-1)\langle \mu_t^{-1}(dy), L^*(t, y, \mu_t^{-1}) \rangle + \xi_1^a+\xi_2^a +  \xi^b\\
  =& \Upsilon -(N-1)\langle \mu_t^{-1}(dy), L^*(t, y, \mu_t^{-1}) \rangle +\xi_1^a+\xi_2^a   + \xi^b.
\end{align*} 
Now  Lemma \ref{lemma:optimizer} implies, after setting $x=X_t^1$, $u=u_t^1$ and $\mu=\mu_t^{-1}$ in \eqref{MEU}, that
\begin{align}
&\Upsilon +  L(X_t^1, u_t^1, \mu_t^{-1}) \label{upsge} \\
&\quad+  \langle \mu_t^{-1}(dy) , \   \delta_\mu L^* (t,y,\mu_t^{-1}; X_t^1) - \delta_\mu L^*(t,y,\mu_t^{-1}; y) \rangle\ge0,
\notag
\end{align}
where the equality holds if
$u_t^1=\phi(t, X_t^1, \mu_t^{-1})$  in both  $\Upsilon $ and $L$.
It follows from \eqref{upsge} that
\begin{align}
 \Theta^*
\ge &- L(X_t^1, u_t^1, \mu_t^{-1})-  \langle \mu_t^{-1}(dy) , \   \delta_\mu L^* (t,y,\mu_t^{-1}; X_t^1) - \delta_\mu L^*(t,y,\mu_t^{-1}; y) \rangle \notag \\
& -(N-1)\langle \mu_t^{-1}(dy), L^*(t, y, \mu_t^{-1}) \rangle
  + \xi_1^a +\xi_2^a+ \xi^b. \label{Thexi}
\end{align}
We have
\begin{align}
\sum_{j=2}^N L^*(t, X_t^j, \mu_t^{-j})&=
 (N-1)\langle \mu_t^{-1}(dy), L^*(t, y, \mu_t^{-1}) \rangle\label{LLxi}  \\
&\hskip -1cm  +\langle \mu_t^{-1}(dy) , \   \delta_\mu L^* (t,y,\mu_t^{-1}; X_t^1) - \delta_\mu L^*(t,y,\mu_t^{-1}; y) \rangle + \sum_{j=2}^N \xi^L_j \notag
\end{align}
and, in view of the terminal conditions of $U$ and $\overline U$,
\begin{align}
\sum_{j=1}^N g(X_T^j, \mu_T^{-j})=   U(T, X_T^1, \mu_T^{-1})+(N-1) \overline U(T,  \mu_T^{-1}) +\sum_{j=2}^N \xi^g_j.\label{gxi}
\end{align}

Combining \eqref{Thexi} and \eqref{LLxi} yields
\begin{align}
\Theta^*(t) &\ge - L(X_t^1, u_t^1, \mu_t^{-1})-\sum_{j=2}^NL^*(t, X_t^j, \mu_t^{-j}) +\xi_1^a+ \xi_2^a + \xi^b + \sum_{j=2}^N \xi^L_j.\label{TheLxL}
\end{align}
By \eqref{5UUb} and \eqref{TheLxL}, we have
\begin{align}
&\EE [U(T, X_T^1, \mu_T^{-1})+(N-1) \overline U(T,  \mu_T^{-1})]\label{ULxi} \\
& \qquad -\EE [U(0, X_0^1, \mu_0^{-1}) +(N-1) \overline U(0, \mu_0^{-1} ) ]\notag\\
&\qquad \ge -\EE \int_0^T\Big[ L(X_t^1, u_t^1, \mu_t^{-1})+\sum_{j=2}^NL^*(t, X_t^j, \mu_t^{-j})\Big]dt\notag \\
&\qquad\qquad+\EE \int_0^T\Big
(\xi_1^a +\xi_2^a+ \xi^b+ \sum_{j=2}^N \xi^L_j\Big) dt
.\notag
\end{align}
By \eqref{gxi} and \eqref{ULxi},
 we have
\begin{align*}
&\EE \int_0^T \Big[ L(X_t^1, u_t^1, \mu_t^{-1})+\sum_{j=2}^NL^*(t, X_t^j, \mu_t^{-j})\Big] dt +\EE \sum_{j=1}^N g(X_T^j, \mu_{T}^{-j})\\
& \qquad \ge\EE [U(0, X_0^1, \mu_0^{-1}) +(N-1) \overline U(0, \mu_0^{-1} ) ]\\
&\qquad\qquad+\EE \int_0^T \Big( \xi_1^a +\xi_2^a + \xi^b+ \sum_{j=2}^N \xi^L_j
 \Big) dt +\EE   \sum_{j=2}^N \xi^g_j,
\end{align*}
where the equality holds if $u_t^1=\phi(t, X_t^1, \mu_t^{-1})$ for all $t$ in \eqref{X1u} for agent ${\mathcal A}_1$.
\end{proof}

\begin{lemma}\label{lemma:xi}
 For \eqref{X1u}--\eqref{Xkuhat},
we have
\begin{align}
& |\xi^a_1 |, |\xi^b|\le \frac{C}{N-1} (1+|X_t^1|^2 +\langle|y|^2\rangle_{\mu_t^{-1}}  ),   \label{xia1b}  \\
& |\xi^a_2 |\le \frac{C}{N-1} ,  \label{xia2}   \\
&|\xi_j^L|\le \frac{C}{|N-1|^2} (1+|X^1_t|^2+|X_t^j|^2+\langle|y|^2\rangle_{\mu_t^{-1}}  ),\label{xiLj}  \\
&|\xi_j^g|\le \frac{C}{|N-1|^2} (1+|X^1_T|^2+|X_T^j|^2+\langle|y|^2\rangle_{\mu_T^{-1}} ).\label{xigj}
\end{align}

\end{lemma}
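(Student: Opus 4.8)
\emph{Proof plan.} The plan is to read all five quantities as Taylor remainders for functions of a probability measure, expanded along the segment joining $\mu_t^{-1}$ to $\mu_t^{-j}$. The structural fact that makes everything small is
\[
\mu_t^{-j}-\mu_t^{-1}=\frac{1}{N-1}\bigl(\delta_{X_t^1}-\delta_{X_t^j}\bigr),
\]
i.e.\ a signed measure of total variation $2/(N-1)$ carried by the two points $X_t^1,X_t^j$; hence a first-order increment of $f^{*}$, $L^{*}$ or $g$ costs a factor $1/(N-1)$ once it is paired with a flat derivative of at most polynomial growth, while an ``increment minus its linear term'' costs $1/(N-1)^2$ once paired with a second-order flat derivative. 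This is genuinely finer than the Wasserstein-Lipschitz bound \eqref{fLipC}, which would only give $O(N^{-1/2})$, and it is why the flat-derivative regularity in Assumption \ref{assum:flg} is needed. For $s\in[0,1]$ I set $\mu_t^{s}\coloneqq\mu_t^{-1}+\tfrac{s}{N-1}(\delta_{X_t^1}-\delta_{X_t^j})$, which is again a probability measure (the mass at $X_t^j$ only decreases, to $(1-s)/(N-1)\ge0$), and record the elementary interpolation estimate $\langle|y|^2\rangle_{\mu_t^{s}}\le\langle|y|^2\rangle_{\mu_t^{-1}}+|X_t^1|^2$, so that every factor $\langle|y|\rangle_{\mu_t^{s}}$ arising along the way is dominated by $1+|X_t^1|+\langle|y|\rangle_{\mu_t^{-1}}$.

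The bound \eqref{xia2} needs no expansion: from the representation \eqref{defUU}, together with the boundedness of $\partial_{yz}\delta_{\mu\mu}V$ and $\partial_{yz}\delta_{\mu\mu\mu}V$ in \eqref{agc} and of $\partial_{yz}\delta_{\mu\mu}M$ in the class ${\cal C}_M$, one gets $|\partial_{yz}\delta_{\mu\mu}U(t,x,\mu;y,z)|\le C$, and summing the $N-1$ bounded summands of $\xi_2^a$ against the prefactor $\tfrac{1}{2(N-1)^2}$ gives \eqref{xia2}. For $\xi_1^a$ I would write, using the first-order flat Taylor formula and $\langle\mu_t^{-j}-\mu_t^{-1},1\rangle=0$,
\[
f^{*}(t,X_t^j,\mu_t^{-j})-f^{*}(t,X_t^j,\mu_t^{-1})=\frac{1}{N-1}\int_0^1\bigl[\delta_\mu f^{*}(t,X_t^j,\mu_t^{s};X_t^1)-\delta_\mu f^{*}(t,X_t^j,\mu_t^{s};X_t^j)\bigr]\,ds,
\]
bound the bracket by $C(1+|X_t^1|+|X_t^j|+\langle|y|\rangle_{\mu_t^{-1}})$ via Assumption \ref{assum:flg} and the interpolation estimate, multiply by $|\partial_y\delta_\mu U(t,X_t^1,\mu_t^{-1};X_t^j)|\le C(1+|X_t^1|+|X_t^j|+\langle|y|\rangle_{\mu_t^{-1}})$ (read off from \eqref{defUU}, \eqref{vgc1}, \eqref{agc} and ${\cal C}_M$), and apply $ab\le\tfrac12(a^2+b^2)$ together with $\langle|y|\rangle_\mu^2\le\langle|y|^2\rangle_\mu$; averaging over $j\ge2$ against $\tfrac{1}{N-1}$ then yields \eqref{xia1b}.

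The three remaining terms are handled by the second-order flat Taylor formula
\[
\Psi(\mu_t^{-j})-\Psi(\mu_t^{-1})-\bigl\langle\mu_t^{-j}-\mu_t^{-1},\,\delta_\mu\Psi(\mu_t^{-1};\cdot)\bigr\rangle=\int_0^1(1-s)\bigl\langle(\mu_t^{-j}-\mu_t^{-1})^{\otimes2},\,\delta_{\mu\mu}\Psi(\mu_t^{s};\cdot,\cdot)\bigr\rangle\,ds,
\]
used with $\Psi=f^{*}(t,X_t^j,\cdot)$ for $\xi^b$, with $\Psi=L^{*}(t,X_t^j,\cdot)$ for $\xi_j^L$, and with $\Psi=g(X_T^j,\cdot)$ for $\xi_j^g$; because $\langle\mu_t^{-j}-\mu_t^{-1},1\rangle=0$ the subtracted first-order term equals exactly $\tfrac{1}{N-1}[\delta_\mu\Psi(\mu_t^{-1};X_t^1)-\delta_\mu\Psi(\mu_t^{-1};X_t^j)]$, which matches the definitions of $\xi^b,\xi_j^L,\xi_j^g$, while $(\mu_t^{-j}-\mu_t^{-1})^{\otimes2}$ is a signed measure of total variation $\le4/(N-1)^2$ supported on $\{X_t^1,X_t^j\}^2$. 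Feeding in the quadratic-growth bounds for $\delta_{\mu\mu}L^{*}$ and $\delta_{\mu\mu}g$ from Assumption \ref{assum:flg} gives \eqref{xiLj} and \eqref{xigj} outright; for $\xi^b$ one multiplies the $O((N-1)^{-2})$ remainder (now using the linear-growth bound on $\delta_{\mu\mu}f^{*}$) by $|\partial_y\delta_\mu\overline U(t,\mu_t^{-1};X_t^j)|\le C(1+|X_t^j|+\langle|y|\rangle_{\mu_t^{-1}})$ (from \eqref{oUdmu1}, \eqref{vgc1} and \eqref{agc}, the normalizing term $\chi$ there being a constant that drops out against a difference of probability measures) and then sums over the $N-1$ values of $j$, which costs one power of $N-1$ and produces \eqref{xia1b}. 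The only genuine work — and the main, though modest, obstacle — is the moment bookkeeping: one must verify that the interpolation estimate and Young's inequality collapse every $\langle|y|\rangle_{\mu_t^{s}}$ and every product of linear-growth factors into the stated quadratic right-hand sides, uniformly in $s\in[0,1]$ and in $N$.
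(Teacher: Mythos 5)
Your proposal is correct and follows essentially the same approach as the paper: a first-order flat Taylor expansion for $\xi_1^a$, a second-order one for $\xi^b,\xi_j^L,\xi_j^g$, direct boundedness for $\xi_2^a$, all driven by the observation that $\mu_t^{-j}-\mu_t^{-1}=\tfrac{1}{N-1}(\delta_{X_t^1}-\delta_{X_t^j})$, which together with the pointwise growth bounds in Assumptions \ref{assum:meV}, \ref{assum:flg} and the class ${\cal C}_M$ beats the $O(N^{-1/2})$ Wasserstein-Lipschitz estimate. The only cosmetic differences are your choice of the integral-remainder form of Taylor's formula versus the paper's double-integral form in \eqref{fTaylor}, and you do not explicitly invoke the semi-symmetry property \eqref{ssp} when asserting $|\partial_{yz}\delta_{\mu\mu}U|\le C$ for $\xi_2^a$, though your conclusion is the same.
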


\begin{proof}
By \eqref{defUU} we calculate $\partial_y \delta_\mu U(t, x, \mu;y)$
and use Assumption \ref{assum:meV} to  obtain
\begin{align}
|\partial_y\delta_\mu U(t, x, \mu;y)|\le C(1+|x|+|y|+\langle|y|\rangle_\mu).
\label{ymuUC}
\end{align}
Next for $\mu^{-j}\coloneqq \frac{1}{N-1}\sum_{k\ne j, k=1}^N \delta_{x^k}$, we have
\begin{align}
&f^*(t, x^j, \mu^{-j})-f^*(t, x^j, \mu^{-1}) \label{ffj1}  \\
&\qquad =
\int_0^1 \int_y\delta_\mu f^*(t, x^j, \mu^{-1} +s(\mu^{-j}-\mu^{-1}) ;y) (\mu^{-j}-\mu^{-1}) (dy)ds. \notag
\end{align}
Denote $\hat \mu_s\coloneqq  \mu^{-1} +s(\mu^{-j}-\mu^{-1})$
and $\Delta^{j,1} \mu\coloneqq \mu^{-j}-\mu^{-1}  $.
  Assumption \ref{assum:flg} implies
\begin{align}
\Big|\int_y \delta_\mu f^*(t, x^j, \hat \mu_s;y)  \Delta^{j,1} \mu (dy)\Big| &=\frac{1}{N-1} |\delta_\mu f^*(t, x^j, \hat \mu_s;x^1)- \delta_\mu f^*(t, x^j, \hat \mu_s;x^j) | \notag \\
&\le \frac{C}{N-1} \Big(1+|x^j|+|x^1|+\frac{1}{N-1}\sum_{k\ge 2} |x^k|\Big).\notag
\end{align}
Subsequently, by \eqref{ymuUC} and \eqref{ffj1}, we have
\begin{align}
|\xi_1^a|&\le \frac{C}{N-1}\Big(1+|X_t^1|^2+\frac{1}{N-1}\sum_{j\ge 2}|X_t^j|^2+ \langle |y|\rangle^2_{\mu_t^{-1}}\Big)\\
&\le \frac{C}{N-1}(1+|X_t^1|^2+ \langle |y|^2\rangle_{\mu_t^{-1}}).\notag
\end{align}

Next, we use the growth conditions of $V$,  as specified by $ {\cal C}_V$, and the semi-symmetry property \eqref{ssp} to establish
\begin{align}
|\partial_{yz}
\delta_{\mu\mu}
U(t, X^1_t,\mu_t^{-1};X_t^k, X^k_t)|\le C,
\end{align}
which implies the bound of $|\xi_2^a|$ in  \eqref{xia2}.
To get the bound of $|\xi^b|$, we write

\begin{align}
f^*(t, x^j, \mu^{-j})=& f^*(t, x^j, \mu^{-1}) +
\langle \Delta^{j,1} \mu(dy)  , \ \delta_\mu f^*(t, x^j, \mu^{-1} ; y)     \rangle \label{fTaylor} \\
& + \int_0^1\int_0^1\int_{y,z} s \delta_{\mu\mu} f^*(t,x^j, \mu^{-1}+ s\tau (\mu^{-j}-\mu^{-1}); y,z) \notag \\
&\qquad\qquad \qquad \cdot  \Delta^{j,1} \mu (dy)
 \Delta^{j,1} \mu  (dz)d\tau  ds.\notag
\end{align}
Denote $\hat\mu_{s,\tau}\coloneqq \mu^{-1}+ s\tau (\mu^{-j}-\mu^{-1}) $. Then
\begin{align*}
&\int_{y,z}\delta_{\mu\mu} f^*(t,x^j,\hat \mu_{s,\tau};y,z)\Delta^{j,1}\mu (dy)\Delta^{j,1}\mu (dz) \\
&=  \frac{1}{N-1} \int_z [\delta_{\mu\mu} f^*(t,x^j,\hat \mu_{s,\tau};x^1,z)- \delta_{\mu\mu} f^*(t,x^j,\hat \mu_{s,\tau};x^j,z) ]\Delta^{j,1}\mu (dz)\\
&=\frac{1}{(N-1)^2}\Big\{ [\delta_{\mu\mu} f^*(t,x^j,\hat \mu_{s,\tau};x^1,x^1)- \delta_{\mu\mu} f^*(t,x^j,\hat \mu_{s,\tau};x^j,x^1) ]\\
&\qquad\qquad\qquad- [\delta_{\mu\mu} f^*(t,x^j,\hat \mu_{s,\tau};x^1,x^j)- \delta_{\mu\mu} f^*(t,x^j,\hat \mu_{s,\tau};x^j,x^j) ]\Big\}.
\end{align*}
Consequently, under Assumption \ref{assum:flg}, we have
\begin{align}
&\Big|\int_{y,z}\delta_{\mu\mu} f^*(t,x^j,\hat \mu_{s,\tau};y,z)\Delta^{j,1}\mu (dy)\Delta^{j,1}\mu (dz)\Big| \label{fsdede}  \\
&\qquad \le \frac{C}{(N-1)^2} (1+|x^1|+|x^j| + \langle|y|\rangle_{\mu^{-1}} ).\notag
\end{align}
We  use Assumption \ref{assum:meV} to obtain
\begin{align}
|\partial_y \delta_\mu \overline U(t, \mu; y)|&\le
|\partial_y V(t, y,\mu)|+
|\langle \mu(dw), \partial_y\delta_\mu V(t, w, \mu; y)  \rangle | \label{ymubU}  \\
&\le C(1+ |y|+\langle |y|\rangle_\mu).\notag
\end{align}
By \eqref{fTaylor}, \eqref{fsdede} and \eqref{ymubU}, we obtain the bound of $|\xi^b|$ in \eqref{xia1b}.
Finally, we follow the method in \eqref{fTaylor} to obtain \eqref{xiLj} and \eqref{xigj}.
\end{proof}

\begin{lemma}\label{lemma:xm2}
 There exists a constant $C_{K_0}$ depending on $K_0$ such that
 for \eqref{X1u}--\eqref{Xkuhat},
we have
\begin{align}\label{ex2k}
\sup_N \sup_{u^1\in {\cal U}_{\cal F^N}^{K_0}} \max_{1\le i\le N, 0\le t\le T} \EE |X_t^i|^2\le C_{K_0}.
\end{align}
\end{lemma}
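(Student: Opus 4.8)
\emph{Proof plan.} The plan is to run the Gr\"onwall argument of Proposition \ref{prop:ex2}, now keeping track of the extra drift contribution of agent $\mathcal{A}_1$'s control and checking that every constant stays independent of $N$ and of the chosen $u^1\in{\cal U}_{\cal F^N}^{K_0}$. First I would fix $N$ and $u^1\in{\cal U}_{\cal F^N}^{K_0}$. Since $f$ is Lipschitz in $(x,\mu)$ uniformly in $u$ and $f^*$ is Lipschitz by \eqref{fLipC}, and (exactly as in the proof of Proposition \ref{prop:ex2}) the Wasserstein distance between two empirical measures on $N-1$ points is dominated by the Euclidean distance of the generating vectors, the drift of \eqref{X1u}--\eqref{Xkuhat} is, for a.e.\ $\omega$, Lipschitz and of linear growth in $(X^1_t,\dots,X^N_t)$; by standard SDE theory this yields a unique strong solution on $[0,T]$. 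To make the moment manipulations rigorous before knowing the second moments are finite, I would localize with the stopping times $\tau_m\coloneqq\inf\{t\ge0:\max_{1\le i\le N}|X^i_t|\ge m\}\wedge T$, so that the Brownian integrands arising from It\^o's formula are bounded on $[0,\tau_m]$ and the corresponding stochastic integrals are true martingales; all estimates are first derived for $t\wedge\tau_m$ and then passed to the limit $m\to\infty$ by Fatou's lemma, using $\tau_m\to T$ a.s.\ (no explosion, by linear growth).

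Applying It\^o's formula to $|X^i_t|^2$, taking expectations, and integrating, the diffusion part contributes at most $T\,{\rm Tr}(\Sigma+\Sigma_0)$. For $i\ge2$ the drift is $f^*(t,X^i_t,\mu_t^{-i})$ and is handled exactly as in Proposition \ref{prop:ex2}: by $|f^*(t,x,\mu)|\le C_{f,\phi}(1+|x|+\langle|y|\rangle_\mu)$, Young's inequality, and $\langle|y|\rangle_{\mu_s^{-i}}^2\le\langle|y|^2\rangle_{\mu_s^{-i}}=\frac{1}{N-1}\sum_{k\ne i}|X^k_s|^2$, one gets $2\EE[|X^i_s||f^*(s,X^i_s,\mu_s^{-i})|]\le C(1+\EE|X^i_s|^2+\frac{1}{N-1}\sum_{k\ne i}\EE|X^k_s|^2)$ (with $X$ evaluated at $s\wedge\tau_m$). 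For $i=1$ the drift is $f(X^1_t,u^1_t,\mu_t^{-1})$, and (A1) gives the same type of bound with the single extra summand $C\,\EE|u^1_s|^2$, whose time integral is at most $C\,\EE\int_0^T|u^1_s|^2dt\le CK_0$ by the definition of ${\cal U}_{\cal F^N}^{K_0}$. Writing $z_t\coloneqq\max_{1\le i\le N}\EE|X^i_{t\wedge\tau_m}|^2$ (which is $\le m^2<\infty$, the localization being used precisely for this), observing that $\frac{1}{N-1}\sum_{k\ne i}\EE|X^k_{s\wedge\tau_m}|^2\le z_s$ for each $i$ --- a convex average, with no dependence on $N$ --- and invoking Assumption \ref{assum:X0} for $\max_i\EE|X^i_0|^2\le C_X$, one collects
\[
z_t\le C_X+T\,{\rm Tr}(\Sigma+\Sigma_0)+CK_0+C\int_0^t(1+2z_s)\,ds ,\qquad 0\le t\le T .
\]
Gr\"onwall's inequality then gives $z_t\le C_{K_0}$ on $[0,T]$, with $C_{K_0}$ depending only on $(T,C_X,C_a,C_{f,\phi},\Sigma,\Sigma_0,K_0)$ and not on $N$, $m$ or $u^1$; letting $m\to\infty$ and applying Fatou's lemma yields \eqref{ex2k}.

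The argument is routine, and the only point deserving care is the bookkeeping that guarantees $N$-uniformity of $C_{K_0}$: the mean-field coupling enters every estimate only through the empirical second moment $\langle|y|^2\rangle_{\mu^{-i}}$, which is an average of the $|X^k|^2$ and hence $\le z_s$ uniformly in $N$ and $i$; the control contributes a single additive $CK_0$ independent of $N$; and Assumption \ref{assum:X0} supplies an $N$-uniform bound on $\max_i\EE|X^i_0|^2$. The stopping-time truncation disposes of the (equally routine) integrability issue caused by $u^1$ being merely an adapted $L^2$ process rather than a Lipschitz feedback law.
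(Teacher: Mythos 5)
Your proposal is correct and follows essentially the same route as the paper's proof: apply It\^o's formula to $|X_t^i|^2$, bound the drift via (A1) (for $i=1$) and the linear growth of $f^*$ (for $i\ge2$), exploit $\EE\int_0^T|u_s^1|^2\,ds\le K_0$ as a single additive constant, and close with Gr\"onwall's inequality applied to $z_t=\max_i\EE|X_t^i|^2$. The only addition is the stopping-time localization and Fatou-limit passage, which the paper leaves implicit; this is a welcome rigor refinement rather than a different argument.
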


\begin{proof}
By It\^o's formula, we have
\begin{align*}
|X_t^1|^2=\ & |X_0^1|^2+ \int_0^t \Big[X_s^{1T}f(X_s^1,u^1_s, \mu^{-1}_s) +  \mbox{Tr} (\Sigma +\Sigma_0)\Big]ds\\
& +
\int_0^t X_s^{1T} (\sigma dW_s^1+\sigma_0dW^0_s).
\end{align*}
We have
\begin{align*}
|X_s^{1T}f(X_s^1,u^1_s, \mu^{-1}_s)| &\le C|X_s^1|\cdot
\Big(1+|X_s^1|+|u_s^1|+\frac{1}{N-1}\sum_{k=2}^N |X_s^k|\Big)\\
&\le C\Big(1+|X_s^1|^2+ |u_s^1|^2 + \frac{1}{N-1} \sum_{k=2}^N |X_s^k|^2\Big).
\end{align*}
Then it follows that
\begin{align}
\EE|X_t^1|^2 \le \EE|X_0^1|^2  + C\int_0^t \Big(1+ \EE|u_s^1|^2+\max_{1\le k\le N} \EE|X_s^k|^2\Big) ds.
\end{align}
Similarly,
for $k\ge 2$, we use Assumption (A1) and linear growth of
$\phi$ in Definition \ref{def:vsol} to obtain
\begin{align}
\EE|X_t^k|^2 \le \EE|X_0^k|^2  + C\int_0^t \Big(1+\max_{1\le k\le N} \EE|X_s^k|^2\Big) ds.
\end{align}
Next we follow the proof of Proposition \ref{prop:ex2} to  use Gr\"onwall's lemma to obtain  \eqref{ex2k}.
\end{proof}

{\bf Proof of Theorem \ref{theorem:pbpo}}.
By Theorem \ref{theorem:Jlb}, Lemmas \ref{lemma:xi} and \ref{lemma:xm2}, we obtain
\begin{align*}
&J^{(N)}_{\rm soc} (\hat u^1, \hat {\mathbf u}^{-1})= \EE [U(0, X_0^1, \mu_0^{-1})+(N-1) \overline U(0,  \mu_0^{-1})]+ O(1/N), \\
& J^{(N)}_{\rm soc} ( u^1, \hat {\mathbf u}^{-1})\ge \EE [U(0, X_0^1, \mu_0^{-1})+(N-1) \overline U(0,  \mu_0^{-1})]- O(1/N)
\end{align*}
for all $u^1(\cdot)\in  {\cal U}_{\cal F^N}^{K_0}$. Then Theorem
\ref{theorem:pbpo} follows readily.  \qed

\section{Explicit solutions in  LQ models}
\label{sec:lq}

This section uses  LQ mean field social optimization models
    to illustrate explicit solutions of the master equations of $V$, $M$ and $U$.
The individual agent has the dynamics
\begin{align}\label{xilq}
dX^i_t=(AX_t^i+Bu_t^i+G X_t^{(-i)}) dt
+D dW_t^i+D_0 dW_t^0,\quad 1\le i\le N,
\end{align}
where the initial states are independent with $\EE |X_0^i|^2<\infty$.
The individual  cost is given by
\begin{align}\label{jilq}
J_i=\EE\int_0^T
(|X_t^i-\Gamma X_t^{(-i)}-\eta|_Q^2
+u_t^{iT} R u_t^i) dt+\EE |X_T^i-\Gamma_f X_T^{(-i)}-\eta_f|_{Q_f}^2,
\end{align}
where $X_t^{(-i)}\coloneqq \frac{1}{N-1}\sum_{j=1, j\ne i}^N X_t^j$ and $|x|_Q^2 \coloneqq x^TQx$.
We have symmetric matrices $Q\ge 0$ and $R>0$.
Denote the social cost $J_{\rm soc}^{(N)}=\sum_{i=1}^N J_i$.

\subsection{The master equation of the value function} \label{sec:sub:lqme}
Denote $\bar x \coloneqq \langle  \mu(dy),y \rangle\in \mathbb{R}^n $ as a function of $\mu$.
For the model \eqref{xilq}--\eqref{jilq}, corresponding to \eqref{optu_noCN} we determine
\begin{align*}
\Phi =\   &  V_x (t, x, \mu)(Ax+Bu^i+G\bar x)+   |x-\Gamma \bar x-\eta|_Q^2 + u^{iT}R u^i\\
&+\langle \mu(dy), \partial_{x}\delta_\mu V (t, y, \mu; x) (Ax+Bu^i+G\bar x)    \rangle.
\end{align*}
The minimizer of $\Phi$ takes the form
\begin{align}
&\hat u^i =\phi(t,x, \mu)= -\frac{1}{2} R^{-1}B^T \big[ V_x^T(t,x,\mu)+\langle \mu(dy),  \partial^T_{x}\delta_\mu V(t,y,\mu; x)\rangle \big].\nonumber
\end{align}
The master equation \eqref{MEVwoCN} becomes
\begin{align*}
&\hskip -0.5cm-\partial_t V(t, x, \mu)\\
=\ &  V_x(t, x, \mu) (Ax+G\bar x)
 +\frac{1}{2} {\rm Tr} [ V_{xx}(t,x,\mu) (DD^T+D_0D_0^T)]\\
& + |x-\Gamma \bar x-\eta|_Q^2 -\frac{1}{4}  V_{x}BR^{-1}B^T  V_{x}^T\\
&+\frac{1}{4} \langle \mu(dy),   \partial_{x}\delta_\mu V(t,y,\mu; x)\rangle \cdot BR^{-1}B^T
\cdot\langle\mu(dy),  \partial^T_{x}\delta_\mu V(t,y,\mu; x)\rangle  \\
& +\langle \mu(dy),   \partial_y\delta_\mu V(t,x,\mu; y) [Ay+B\phi(t,y,\mu)+G\bar x]\rangle \\
&+\frac{1}{2}\langle \mu(dy), {\rm Tr} [\partial_{y}^2 \delta_\mu V(t,x,\mu; y)
(DD^T+D_0D_0^T) ]\rangle \\
& +\langle \mu(dy),  {\rm Tr} [ \partial_{xy}\delta_\mu V(t,x, \mu;y) D_0D_0^T ]\rangle \notag  \\
&+\frac{1}{2}\langle \mu^{\otimes2}(dydz), {\rm Tr}[\partial_{yz}\delta_{\mu\mu} V (t, x, \mu; y,z)
D_0D_0^T]\rangle,\notag
\end{align*}
where $V(T, x,\mu)=|x-\Gamma_f \bar x -\eta_f|_{Q_f}^2$.

We take the ansatz
\begin{align}
V(t,x,\mu) =\ & x^{T} P(t) x
+\bar x^T \Lambda(t) \bar x + 2 x^{T} H(t) \bar x  \label{Vqsol}   \\
& +2 x^T S(t) +2\bar x^{T} \theta(t)+r(t), \nonumber
\end{align}
where the functions  $P(t)$, $\Lambda(t)$, $\cdots$, $r(t)$  are to be determined, with $P(t)$ and $\Lambda (t)$ being symmetric.
We calculate
\begin{align}
& \delta_{\mu}V(t,x,\mu;y)= 2\bar x^T\Lambda y+2x^THy+2y^{T}\theta+\chi(t,x,\mu) ,  \notag \\
& \delta_{\mu\mu} V(t,x, \mu;y,z)=2z^{T}\Lambda y+\delta_\mu \chi(t,x,\mu;z)  +\chi_1(t,x,y,\mu),  \notag
\end{align}
and derive the ordinary differential equation  (ODE) system:
\begin{align}
& \dot P = -A^T P -PA +PBR^{-1} B^T P -Q, \label{odeP}  \\
& \dot \Lambda  = -(\Lambda +H) BR^{-1}B^T (\Lambda+H^{T}) \label{odeLam} \\
 &\quad -\Lambda [A+G-BR^{-1} B^T(P+\Lambda+ H+H^T)]\notag \\
 & \quad -[A+G-BR^{-1} B^T(P+\Lambda +H+H^T)]^T \Lambda\notag \\
 & \quad +H^TBR^{-1} B^T H- H^TG - G^TH  -\Gamma^T Q\Gamma ,\notag \\
& \dot H  = -A^T H +P BR^{-1} B^T H  \label{odeH}  \\
& \quad -H [A+G-BR^{-1} B^T(P+\Lambda +H+H^T)]-PG +Q\Gamma  ,\notag \\
& \dot S = -A^T S+ PBR^{-1} B^TS+H BR^{-1} B^T(S+\theta) +Q\eta,\label{odeS} \\
& \dot \theta = H^T BR^{-1} B^TS +\Lambda    BR^{-1} B^T(S+\theta) \label{odetheta} \\
& \quad -[A+G-BR^{-1} B^T(P+\Lambda+ H+H^T)]^T\theta\notag \\
& \quad  -(\Lambda+H ) BR^{-1} B^T\theta-G^TS -\Gamma^T Q \eta ,\notag \\
& \dot r = S^T BR^{-1} B^T S
+ \theta^T BR^{-1}B^T (2S+\theta) \label{oder} \\
&\quad -{\rm Tr}[(P+\Lambda+H+H^T)(D_0D_0^T)]-{\rm Tr} (PDD^T)   -\eta^T Q\eta,\notag
 \end{align}
where the terminal conditions are
\begin{align*}
&P(T)= Q_f, \quad \Lambda(T)= \Gamma_f^T  Q_f\Gamma_f , \quad  H(T)=-Q_f \Gamma_f,\\
& S(T)=-Q_f\eta_f,   \quad   \theta(T)=\Gamma_f Q_f\eta_f ,\quad    r(T)= \eta_f^T Q_f \eta_f.
\end{align*}
 We determine the  control law
 \begin{align}
 \phi(t,x,\mu) =&
 -R^{-1} B^T[Px +(\Lambda+H+H^T )
 \bar x +S+\theta ].  \label{meui}
 \end{align}

\begin{theorem}\label{theorem:lqv}
The ODE system \eqref{odeP}--\eqref{oder} has a unique solution
$(P,\Lambda, \cdots, r)$ on $[0,T]$.
\end{theorem}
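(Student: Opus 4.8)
The plan is to exploit the cascade structure of \eqref{odeP}--\eqref{oder}: the Riccati equation \eqref{odeP} for $P$ decouples from the rest; once $P$ is known, the pair $(H,\Lambda)$ is determined by \eqref{odeLam}--\eqref{odeH}; once $(P,H,\Lambda)$ is known, $(S,\theta)$ is determined by the linear system \eqref{odeS}--\eqref{odetheta}; and $r$ is then obtained from \eqref{oder} by a single integration. Since every right-hand side is polynomial (hence locally Lipschitz) in the unknowns, local existence and uniqueness at each stage is automatic, and the only real issue is ruling out finite-time blow-up, which I would address stage by stage, working with the backward variable $s=T-t$. The first stage is \eqref{odeP}: this is the classical symmetric matrix Riccati differential equation with terminal datum $P(T)=Q_f\ge0$ (nonnegativity of $Q_f$ following from the sign condition on $g$ in (A2)); since $R>0$ and $Q,Q_f\ge0$, standard Riccati theory --- via the associated well-posed finite-horizon LQ regulator, or a comparison argument squeezing $P$ between $0$ and the solution of a linear matrix ODE --- gives a unique symmetric solution on all of $[0,T]$, bounded there with $P(t)\ge0$.

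The crucial step is the $(H,\Lambda)$ stage, for which I would introduce the combination
\[
\widehat\Lambda\coloneqq P+\Lambda+H+H^T,
\]
which is precisely the matrix appearing inside $BR^{-1}B^T(\cdot)$ throughout \eqref{odeLam}--\eqref{odeH}. Computing $\dot{\widehat\Lambda}=\dot P+\dot\Lambda+\dot H+\dot H^T$ and collecting terms, one finds that the quadratic cross-terms collapse and $\widehat\Lambda$ solves the symmetric Riccati equation
\[
\dot{\widehat\Lambda}=-(A+G)^T\widehat\Lambda-\widehat\Lambda(A+G)+\widehat\Lambda BR^{-1}B^T\widehat\Lambda-(I-\Gamma)^TQ(I-\Gamma),
\]
with terminal condition $\widehat\Lambda(T)=P(T)+\Lambda(T)+H(T)+H(T)^T=(I-\Gamma_f)^TQ_f(I-\Gamma_f)\ge0$, which is an immediate check from the given terminal data. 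Since $R>0$ and $(I-\Gamma)^TQ(I-\Gamma),(I-\Gamma_f)^TQ_f(I-\Gamma_f)\ge0$, standard Riccati theory again yields a unique symmetric solution $\widehat\Lambda$ on $[0,T]$, bounded there.

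With $P$ and $\widehat\Lambda$ now known, continuous and bounded on $[0,T]$, equation \eqref{odeH} reads
\[
\dot H=-A^TH-H(A+G)+PBR^{-1}B^TH+HBR^{-1}B^T\widehat\Lambda-PG+Q\Gamma,
\]
a \emph{linear} matrix ODE in $H$ with continuous bounded coefficients, hence uniquely solvable on $[0,T]$; then $\Lambda\coloneqq\widehat\Lambda-P-H-H^T$ is uniquely determined and, by reversibility of the algebraic identity used above, satisfies \eqref{odeLam}. For uniqueness one argues in reverse: any solution of \eqref{odeP}--\eqref{odeH} forces $P$ to be the symmetric Riccati solution, then forces $\widehat\Lambda$ to solve the displayed Riccati equation (so $\widehat\Lambda$ is unique), then forces $H$ to solve the displayed linear equation (so $H$ is unique), and finally fixes $\Lambda$. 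Once $P,\Lambda,H$ are in hand and bounded, \eqref{odeS}--\eqref{odetheta} is a first-order \emph{linear} system for $(S,\theta)$ with continuous bounded coefficients and bounded forcing, so it has a unique solution on $[0,T]$; and \eqref{oder} then expresses $\dot r$ as an explicit continuous function of $t$ through the already-determined $P,\Lambda,H,S,\theta$, so $r$ is recovered by integration, uniquely.

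Assembling the four stages gives existence and uniqueness of $(P,\Lambda,H,S,\theta,r)$ on all of $[0,T]$. I expect the main obstacle to be the algebraic bookkeeping in the $(H,\Lambda)$ stage: verifying that the quadratic cross-terms in $\dot P+\dot\Lambda+\dot H+\dot H^T$ genuinely assemble into the single quadratic $\widehat\Lambda BR^{-1}B^T\widehat\Lambda$ and that the remaining terms match the affine part $-(A+G)^T\widehat\Lambda-\widehat\Lambda(A+G)$ and the constant $-(I-\Gamma)^TQ(I-\Gamma)$. This is the one computation that requires care; everything else reduces to linear ODE theory and the classical global existence result for symmetric matrix Riccati differential equations with nonnegative weights.
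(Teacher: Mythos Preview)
Your proposal is correct and follows essentially the same approach as the paper: introduce the combination $Z=\widehat\Lambda=P+\Lambda+H+H^T$, observe that it satisfies a standard symmetric Riccati equation with nonnegative weights and terminal datum $(I-\Gamma_f)^TQ_f(I-\Gamma_f)$, solve it globally on $[0,T]$, then recover $H$ from the resulting linear equation, set $\Lambda=Z-P-H-H^T$, and finish with the linear equations for $(S,\theta)$ and $r$. Your discussion is in fact slightly more detailed than the paper's, which simply invokes the standard Riccati equation for $Z$ without spelling out why global solvability holds.
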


\begin{proof}
We first obtain a unique solution $P$ on $[0,T]$.
Denote $Z\coloneqq P+\Lambda +H+H^T$. By the ODEs of $(P, \Lambda, H)$, we can show that $Z$ satisfies
the following equation
\begin{align}
\dot Z=-(A+G)^T Z-Z(A+G) +ZBR^{-1} B^TZ -(I-\Gamma)^TQ (I-\Gamma), \label{odeZP}
\end{align}
where $Z(T) =  (I-\Gamma_f^T) Q_f(I- \Gamma_f)$. We start by solving the standard Riccati equation \eqref{odeZP}  to get a unique solution $Z(t)$ on $[0,T]$.
 Setting $P+\Lambda+H+H^T$  in \eqref{odeH} as $Z$, we determine $H$ using a linear equation, and further obtain $\Lambda =Z-P-H-H^T $.
 Subsequently, we solve two linear equations  to obtain $(S, \theta)$. Next, $r$ is determined from a linear equation.

The above calculation gives a solution of \eqref{odeP}--\eqref{oder}   on $[0,T]$, which is clearly unique by the local Lipschitz continuity  of the vector field of the ODE system.
\end{proof}

\subsection{The equation of $M$}

Given the control law $\phi$ in \eqref{meui},
 we determine
\begin{align*}
f^*(t, x,\mu)=\ & (A-BR^{-1} B^T P)x +\widehat G \bar x  - BR^{-1} B^T (S+\theta),
\end{align*}
where
\begin{align} \label{defwhg}
\widehat G(t)\coloneqq G- BR^{-1} B^T (\Lambda(t) +H(t)+H^T(t)).
\end{align}
Now master equation \eqref{meG2} reduces to the following form:
\begin{align}
0=&\ \partial_t M(t, \mu)+
\langle  \mu(dy) ,  [\partial_y \delta_\mu  M(t, \mu; y)] f^*(t, y, \mu)     \rangle \label{meG2a}\\
& +\frac12 \langle \mu(dy), \mbox{Tr} [\partial_{y}^2\delta_\mu M (t,\mu;y) (DD^T +D_0D_0^T)]\rangle \nonumber \\
& +\frac12 \langle \mu^{\otimes2}(dydz), \mbox{Tr}[ \partial_{yz}\delta_{\mu\mu} M (t,\mu;y,z) D_0D_0^T]\rangle \nonumber \\
& - \langle \mu^{\otimes2}(dydz), [\partial_{y}\delta_\mu V (t,z,\mu;y)]\cdot [\delta_\mu f^*(t,y,\mu;y)-\delta_\mu f^*(t,y,\mu;z) ]\rangle \nonumber \\
&+   \mbox{Tr}( \Lambda DD^T),  \nonumber\\
&\hskip -0.5cm  M(T,\mu)=0, \quad (t,\mu)\in[0,T]\times
 {\cal P}_2(\mathbb{R}^n).
\end{align}
We take the ansatz
\begin{align}
M(t,\mu)= \langle \mu, y^T \Pi_1^o(t) y\rangle +\bar x^T \Pi_2^o(t) \bar x +2\bar x^T \theta^o(t) +r^o(t),
\end{align}
where $ \Pi_1^o(t) $ and $\Pi_2^o(t)$ are symmetric.
This gives
\begin{align*}
&\delta_\mu M(t, \mu;y)=   y^T \Pi_1^o(t) y +2\bar x^T \Pi_2^o(t) y
+2y^T \theta^o(t)+\chi(t,\mu),\\
&\delta_{\mu\mu}M(t, \mu; y,z)= 2z^T \Pi_2^o y+ \delta_\mu \chi(t,\mu;z)+ \chi_1(t,y,\mu).
\end{align*}

We derive the following linear ODE system:
\begin{align*}
\dot\Pi_1^o =\ &-\Pi_1^o (A-BR^{-1}B^T P) -(A-BR^{-1}B^T P)^T \Pi_1^{o}  - (H\widehat G +\widehat G^TH^T),\\
\dot \Pi_2^o =\ & - \Pi_2^o (A+\widehat G-BR^{-1}B^TP) - (A+\widehat G-BR^{-1}B^TP)^T \Pi_2^o\\
& - \Pi_1^o\widehat G - \widehat G^T \Pi_1^o +H\widehat G +\widehat G^T
H^T, \\
\dot \theta^o =\ &  - (A+\widehat G-  BR^{-1}B^TP)^T \theta^o+
(\Pi_1^o+\Pi_2^o)BR^{-1}B^T (S+\theta),\\
 \dot r^o =\ &  2\theta^{oT} BR^{-1}B^T (S+\theta) - {\rm Tr}[ (\Pi_1^o+\Lambda) DD^T + (\Pi_1^o+\Pi_2^o)(D_0D_0^T)] ,
\end{align*}
with the terminal conditions
$\Pi_1^o(T)=\Pi_2^o(T)=0$, $ \theta^o(T)=0$ and $  r^o(T)=0$, where
the functions $(P,\Lambda, H, \widehat G, S, \theta )$ have been determined from Theorem \ref{theorem:lqv} and \eqref{defwhg}.
The next proposition is obvious.
\begin{proposition}\label{prop:M}
The ODE system of $(\Pi_1^o,\Pi_2^o, \theta^o, r^o)$ has a unique solution on
$[0,T]$.
\end{proposition}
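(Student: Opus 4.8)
The plan is to exploit two structural features of the system: it is \emph{linear} in the unknowns $(\Pi_1^o,\Pi_2^o,\theta^o,r^o)$, and it has a \emph{cascade (block-triangular) structure}, so one can solve it block by block, exactly as in the proof of Theorem \ref{theorem:lqv}. The only external input is that, by Theorem \ref{theorem:lqv} and \eqref{defwhg}, the functions $P,\Lambda,H,\widehat G,S,\theta$ are continuous (indeed $C^1$) on the \emph{closed} interval $[0,T]$; hence every matrix- and vector-valued coefficient appearing in the four equations is continuous on $[0,T]$, which rules out any finite-time escape.

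First I would treat the equation for $\Pi_1^o$, which reads $\dot\Pi_1^o = -\Pi_1^o\mathcal A(t)-\mathcal A(t)^T\Pi_1^o - \big(H\widehat G+\widehat G^TH^T\big)$ with $\mathcal A(t)\coloneqq A-BR^{-1}B^TP(t)$; this is a linear (Lyapunov-type) matrix ODE whose right-hand side is affine in $\Pi_1^o$ with continuous coefficients, so the terminal-value problem $\Pi_1^o(T)=0$ has a unique solution on all of $[0,T]$. Symmetry of $\Pi_1^o$ is automatic: the forcing term $H\widehat G+\widehat G^TH^T$ is symmetric and the terminal value is symmetric, so transposing the ODE shows $\Pi_1^{oT}$ solves the same problem, and uniqueness forces $\Pi_1^{oT}=\Pi_1^o$. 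Next, with $\Pi_1^o$ now a known continuous symmetric function, the equation for $\Pi_2^o$ is again a linear Lyapunov-type matrix ODE with continuous coefficients and (symmetric) forcing $-\Pi_1^o\widehat G-\widehat G^T\Pi_1^o+H\widehat G+\widehat G^TH^T$, so it has a unique symmetric solution on $[0,T]$ by the same argument. Then $\theta^o$ satisfies the linear vector ODE $\dot\theta^o=-(A+\widehat G-BR^{-1}B^TP)^T\theta^o+(\Pi_1^o+\Pi_2^o)BR^{-1}B^T(S+\theta)$ with continuous coefficients and forcing, hence a unique solution on $[0,T]$; and finally $r^o$ is obtained simply by integrating a continuous function, $r^o(t)=\int_t^T\big\{2\theta^{oT}BR^{-1}B^T(S+\theta)-{\rm Tr}[(\Pi_1^o+\Lambda)DD^T+(\Pi_1^o+\Pi_2^o)D_0D_0^T]\big\}\,ds$. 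Uniqueness of the whole quadruple follows from local Lipschitz continuity (in fact linearity) of the vector field, exactly as for \eqref{odeP}--\eqref{oder}.

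In truth there is no substantive obstacle here, which is why the proposition is stated as obvious; the only points that merit a sentence of care are (i) that the coefficients inherited from Theorem \ref{theorem:lqv} live on the closed interval, so the linear equations cannot blow up before time $0$, and (ii) that the symmetry of $\Pi_1^o$ and $\Pi_2^o$ posited in the ansatz is consistent with the equations — and both are immediate from the symmetry of the respective forcing terms together with uniqueness.
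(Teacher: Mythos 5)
Your proof is correct and matches the paper's intent: the paper declares the proposition "obvious" precisely because, once Theorem \ref{theorem:lqv} provides continuous $(P,\Lambda,H,\widehat G, S,\theta)$ on $[0,T]$, the system for $(\Pi_1^o,\Pi_2^o,\theta^o,r^o)$ is a linear, block-triangular ODE system with continuous coefficients, which one solves in cascade with no possibility of blow-up. Your added remarks on the consistency of the symmetric ansatz are harmless elaborations of the same argument.
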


\begin{remark}
In view of  Theorem \ref{theorem:lqv} and Proposition \ref{prop:M},
for the model \eqref{xilq}--\eqref{jilq}, we may verify Assumptions
\ref{assum:meV}, \ref{assum:flg}, \ref{assum:meM}, and \ref{assum:Lip}.
\end{remark}

\subsection{The solution of $ U$}

We use \eqref{defUU} to construct a solution of $U$
in the linear quadratic case  and obtain
\begin{align}
U(t,x,\mu)=\ & x^TPx+ \langle \mu, y^T (\Pi_1^o -H-H^T) y  \rangle +\bar x^T (\Pi_2^o - \Lambda)\bar x \label{ULQ} \\
&+ 2x^T (\Lambda+H+H^T)\bar x +2x^T(S+\theta)+2\bar x^T \theta^o + r+r^o, \notag
\end{align}
where $ \bar x \coloneqq \langle y\rangle_\mu   $.
On the other hand, we may also directly solve \eqref{MEUa} by looking for quadratic solutions of the following form
\begin{align}
 U(t, x, \mu) & =   x^T \Pi_1^\diamond(t) x + \langle \mu, y^T \Pi_2^\diamond(t) y\rangle+
 \bar x^T \Pi_3^\diamond(t) \bar x  \label{ULQ2} \\
& \quad + 2x^T \Pi_4^\diamond(t) \bar x+ 2 x^T S^\diamond(t) +  2\bar x^T \theta^\diamond(t) +r^\diamond(t),\notag
\end{align}
where $\Pi_1^\diamond(t)$, $\Pi_2^\diamond(t)$ and $\Pi_3^\diamond(t) $
 are symmetric matrix functions of $t$.

\begin{theorem}
For the  model \eqref{xilq}--\eqref{jilq}, the master equation \eqref{MEUa} has a unique  solution $U$ within the class of quadratic solutions, which coincides with the representation \eqref{ULQ}.
\end{theorem}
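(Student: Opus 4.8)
The plan is to substitute the quadratic ansatz \eqref{ULQ2} into the master equation \eqref{MEUa}, match the terms of equal functional type to obtain a closed \emph{linear} ODE system for the coefficient functions $(\Pi_1^\diamond,\Pi_2^\diamond,\Pi_3^\diamond,\Pi_4^\diamond,S^\diamond,\theta^\diamond,r^\diamond)$, argue that this system has a unique solution on $[0,T]$ by standard linear ODE theory, and then identify the coefficients read off from \eqref{ULQ} as precisely that solution. Combined with Theorem \ref{theorem:U}, which already guarantees that \eqref{ULQ} (being the function \eqref{defUU} in the LQ setting) solves \eqref{MEUa}, this yields both existence of a quadratic solution and its uniqueness within the quadratic class, and forces it to coincide with \eqref{ULQ}.

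First I would compute, from \eqref{ULQ2} with $\bar x=\langle y\rangle_\mu$, the derivatives entering \eqref{MEUa}: $U_x=2x^T\Pi_1^\diamond+2\bar x^T(\Pi_4^\diamond)^T+2(S^\diamond)^T$, $U_{xx}=2\Pi_1^\diamond$, and $\delta_\mu U(t,x,\mu;y)=y^T\Pi_2^\diamond y+2\bar x^T\Pi_3^\diamond y+2x^T\Pi_4^\diamond y+2(\theta^\diamond)^T y+\chi(t,x,\mu)$, whence $\partial_y\delta_\mu U=2\Pi_2^\diamond y+2\Pi_3^\diamond\bar x+2(\Pi_4^\diamond)^T x+2\theta^\diamond$, $\partial_y^2\delta_\mu U=2\Pi_2^\diamond$, $\partial_{xy}\delta_\mu U=2\Pi_4^\diamond$, and $\partial_{yz}\delta_{\mu\mu}U=2\Pi_3^\diamond$ (up to the normalizing terms $\chi,\chi_1$, which drop out under the $\mu$-integrations by the convention $\langle\mu(dy),\delta_\mu(\cdot;y)\rangle=0$). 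Next I would insert the already-determined LQ data: $f^*(t,y,\mu)=(A-BR^{-1}B^TP)y+\widehat G\bar x-BR^{-1}B^T(S+\theta)$ with $\widehat G$ as in \eqref{defwhg}, $L^*(t,y,\mu)=|y-\Gamma\bar x-\eta|_Q^2+\phi^TR\phi$ with $\phi$ from \eqref{meui}, the increments $\delta_\mu f^*(t,y,\mu;x)-\delta_\mu f^*(t,y,\mu;y)=\widehat G(x-y)$ together with $\delta_{\mu\mu}f^*\equiv 0$ (since $f^*$ is affine in $\mu$), the analogous increments of $\delta_\mu L^*$, and the derivatives of $\overline U(t,\mu)=\langle\mu,y^TPy\rangle+\bar x^T(\Lambda+H+H^T)\bar x+2\bar x^T(S+\theta)+r$ given by \eqref{defUUb}. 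Collecting the resulting terms by type---quadratic in $x$, of the form $\langle\mu,y^T(\cdot)y\rangle$, quadratic in $\bar x$, bilinear in $(x,\bar x)$, linear in $x$, linear in $\bar x$, and constant---produces the ODE system, with terminal conditions obtained by matching \eqref{UTm}. Each equation is linear in the unknowns because all its coefficients involve only the previously solved functions $(P,\Lambda,H,S,\theta,r)$ of Theorem \ref{theorem:lqv} and $(\Pi_1^o,\Pi_2^o,\theta^o,r^o)$ of Proposition \ref{prop:M}, which are continuous on $[0,T]$; hence the system admits a unique solution on $[0,T]$.

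To close the argument I would match \eqref{ULQ} against \eqref{ULQ2}, namely set $\Pi_1^\diamond=P$, $\Pi_2^\diamond=\Pi_1^o-H-H^T$, $\Pi_3^\diamond=\Pi_2^o-\Lambda$, $\Pi_4^\diamond=\Lambda+H+H^T$, $S^\diamond=S+\theta$, $\theta^\diamond=\theta^o$, $r^\diamond=r+r^o$. Since \eqref{ULQ} is the function \eqref{defUU} specialized to the LQ model, and the standing assumptions of Theorem \ref{theorem:U} have been verified for \eqref{xilq}--\eqref{jilq}, that theorem shows \eqref{ULQ} solves \eqref{MEUa}--\eqref{UTm}; therefore the coefficients just listed satisfy the ODE system. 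By the uniqueness established above, any quadratic-form solution of \eqref{MEUa} must have exactly these coefficients, so it equals \eqref{ULQ}. (Alternatively, one may observe that the difference of two quadratic solutions solves the homogeneous linear equation obtained from \eqref{MEUa} by deleting the $L^*$- and $\overline U$-terms, with zero terminal data, and hence vanishes.)

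The only real obstacle is the bookkeeping in the coefficient-matching step: one must keep careful track of the mixed measure derivatives paired with $\Sigma_0=D_0D_0^T$ and of the $\overline U$-contributions involving $\partial_y\delta_\mu\overline U$ and $[\partial_{yz}\delta_{\mu\mu}\overline U]|_{z=y}$ traced against $\Sigma=DD^T$, and verify that the normalizing terms $\chi,\chi_1$ indeed cancel under the $\mu$-integrations. Once this is carried out the system is manifestly linear and closed, so the existence-uniqueness claim and the identification with \eqref{ULQ} follow at once.
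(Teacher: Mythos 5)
Your proposal is correct and follows essentially the same route as the paper: substitute the quadratic ansatz \eqref{ULQ2} into \eqref{MEUa}, obtain a closed linear ODE system for the coefficient functions, verify that the coefficients read off from \eqref{ULQ} satisfy it (you invoke Theorem \ref{theorem:U} for this, while the paper checks the ODEs one by one—both are valid), and conclude uniqueness within the quadratic class from standard ODE uniqueness. Your explicit listing of the relevant derivatives, the observation that the normalizing terms $\chi,\chi_1$ cancel under $\mu$-integration, and the alternative homogeneous-equation argument for uniqueness are all fine refinements that the paper states more tersely.
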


\begin{proof}
By use of \eqref{ULQ2}, we derive an ODE system for $(\Pi_1^\diamond, \Pi_2^\diamond, \cdots, r^\diamond)$. We further show solvability  the ODEs one by one, in the following order to take
$\Pi^\diamond_1 =P$, $\Pi_2^\diamond =\Pi_1^o -H-H^T$,  $\Pi_4^\diamond=\Lambda+H+H^T  $, $ \Pi_3^\diamond=\Pi_2^o-\Lambda $,  $S^\diamond = S+\theta  $, $ \theta^\diamond=\theta^o$,
and $r^\diamond = r+r^o $. The solution is clearly unique by the local Lipschitz continuity property of the vector field of the ODE system of $(\Pi_1^\diamond,
\Pi_2^\diamond, \cdots, r^\diamond  )$.
\end{proof}

\subsection{An example from systemic risk}
\label{sec:sr}

In the system of inter-bank lending and borrowing,
the state processes of $N$ banks,  as the log-capitalization defined in \cite{CFS15}, have dynamics
\[
dX^i_t=u_t^idt+\sigma(\sqrt{1-\rho^2}dW^i_t +\rho dW_t^0),\quad 0\le \rho \le 1,
\]
with the initial value $X^{i}_0$,
where $W_t^0$, $W^1_t$, $\cdots$, $W_t^N$ are independent standard Brownian motions.
The $N$ banks obtain the social optimal strategy through minimizing
\[
J^{(N)}_{\rm soc}({\mathbf u}(\cdot))=\sum_{i=1}^NJ_i({\mathbf u}(\cdot)) ,\quad {\mathbf u}\coloneqq(u^1, \cdots, u^N),
\]
where
\[
J_i({\mathbf u}(\cdot))=\EE\int_0^T L(X_t^i,u_t^i  , X_t^{(-i)})dt+\EE g(X^i_T, X_{T}^{(-i)})
\]
with $X_t^{(-i)}\coloneqq \frac{1}{N-1}\sum_{j\ne i} X_t^j $, running cost
\[
L(x^i ,u^i , z)= (u^i)^2+2qu^i(x^i-z)
+{\epsilon_0}(x^i-z)^2,
\quad q^2\leq\epsilon_0,
\]
and  terminal cost
$
g(x^i,z)={c}(x^{i}-z )^2.
$
The parameters $q$, $\epsilon_0$ and $c$ are positive with $q^2\leq\epsilon_0$  to ensure convexity  of the running cost $L(x^i, u^i,z)$.
See \cite{CFS15} for an interpretation of $q$ in terms of incentive for lending/borrowing.

\subsubsection{The direct solution}

Denote
${\mathbf x}=(x^1,\cdots , x^N)^T$. The value function   is

\begin{eqnarray}
\nonumber&&U^N_{\rm soc }(t,{\mathbf x})=\inf_{{\mathbf u(\cdot)}}\sum_{i=1}^N\EE\left[\int_t^T L( X^{i}_s ,u^i_s ,  X_s^{(-i)})ds+g(X^{i}_T,X_T^{(-i)})\right].
\end{eqnarray}
where $(t,{\mathbf x})\in [0,T]\times{\mathbb R}^N$.
By solving the HJB equation of $U_{\rm soc}^N$, we obtain
\begin{align}
U^N_{\rm soc }(t,{\mathbf x})={\mathbf  x}^T {\mathbf P}(t) {\mathbf x} + r(t), \notag
\end{align}
where ${\mathbf P}(t)$ has $N$ diagonal entries $\pi_1$ and off-diagonal entries  all equal to $\pi_2$, with
 \begin{align*}
 &0=\dot \pi_1 - (\pi_1+q)^2  -\Big(\pi_2 -\frac{q}{N-1}\Big)^2(N-1) + \frac{\epsilon_0 N}{N-1},\\
&0=\dot \pi_2 - 2(\pi_1 +q)\Big(\pi_2-\frac{q}{N-1}\Big) -\Big(\pi_2-\frac{q}{N-1}\Big)^2(N-2)- \frac{\epsilon_0 N}{(N-1)^2} ,
 \end{align*}
 where $\pi_1(T)= cN/(N-1)$ and $\pi_2(T)=-cN/(N-1)^2$.
The optimal control law of the $i$-th bank is
\begin{align}\label{sroc}
\hat u^{N,i}(t, {\mathbf x})=- (\pi_1+q) x^i -\Big(\pi_2  - \frac{q}{N-1} \Big)\sum_{k\ne i, k=1}^N x^k.
\end{align}
Denote the ODE
\begin{align}
0=\dot P_d -(P_d+q)^2+\epsilon_0, \quad P_d(T)= c,
\end{align}
which has a unique solution on $[0,T]$. We can further show $\sup_{0\le t\le T}|\pi_1(t)-P_d(t)|=O(1/N)$. By checking the  ODE of $(N-1) \pi_2$, we obtain  $\sup_{0\le t\le T}|(N-1)\pi_2(t)+P_d(t)|=O(1/N)$.
As $N\to \infty$, the optimal control law \eqref{sroc} takes the limiting form
\begin{equation}\label{PO_systemic}
\hat u^i(t,x^i,\mu)=[P_d(t)+q](\bar x-x^i),
\end{equation}
where $\bar x\coloneqq \langle \mu, y \rangle$.
Comparing (\ref{PO_systemic}) with the open-loop and closed-loop Nash equilibria proposed in \cite{CFS15}, the social optimum has the same solution as the mean field game has in the limit with $N\rightarrow\infty$.

\subsubsection{The master equation-based control }

By   \eqref{optu_noCN}, we obtain the minimizer
\begin{align}
\hat u^i= -\frac12V_x (t,x,\mu)-q(x-\bar x) -\frac12
\langle \mu(dy) , \partial_x \delta_\mu V(t,y,\mu; x)   \rangle , \label{srmeu}
\end{align}
where $\bar x\coloneqq \langle \mu, y \rangle$.
We take the ansatz $V(t, x, \mu)=P(t) x^2 +\Lambda(t) \bar x^2+2H(t) x\bar x+r(t)$ and  derive
\begin{align*}
& 0= \dot P -(P+q)^2+\epsilon_0 ,  \\
& 0= \dot \Lambda +(\Lambda +H)^2-(H-q)^2  -2\Lambda (P+\Lambda +2H)+\epsilon_0 , \\
& 0= \dot H -(P+q)(H-q)-H(P+\Lambda +2H) -\epsilon_0   ,\\
&0 =\dot r  +\sigma^2 P+ \sigma^2 \rho^2 (\Lambda +2H ),
\end{align*}
where $P(T)=c$, $\Lambda (T)=c$, $H(T)= -c$, $r(T)=0$.
Clearly, $P=P_d$ on $[0,T]$.
Denote $Z_1\coloneqq \Lambda +2H$, and we use the equations of $\dot \Lambda$ and $\dot H$ to write the ODE:
\begin{align}\label{Z1ode}
0=\dot Z_1 +(Z_1-q)^2-2(P+Z_1)(Z_1-q) -\epsilon_0, \quad Z_1(T)= -c,
\end{align}
 for which we can show that $Z_1=-P$ is a solution. By uniqueness of the solution of \eqref{Z1ode}, it follows that $\Lambda + 2H=-P$ on $[0,T]$. Subsequently, we determine $H$ from a linear equation using the equation of $\dot H$ after setting $\Lambda +2H$ as $-P$; this in turn determines  $\Lambda$.
 The above solution $(P, \Lambda, H)$ is unique.
The  control law \eqref{srmeu} now becomes
\begin{align*}
\hat u^i(t,x,\mu)&= -\{[P(t)+q]x+[\Lambda(t) +2H(t)-q] \bar x\}\\
&= [P(t)+q](\bar x-x),
\end{align*}
which agrees with \eqref{PO_systemic}.

\section{Concluding remarks}

Our performance estimate via multi-scale analysis has been based on existence of sufficiently well behaved solutions of two master equations for $V$ and $M$. For future work, it will be of interest to develop  existence results following such methods as in \cite{CDLL19,CCD22} for more specific models.

\bibliographystyle{amsplain}

\section*{Appendix A: A formal derivation of the master  equation of $V$}

\label{sec:AppA}

\renewcommand{\theequation}{A.\arabic{equation}}
\setcounter{equation}{0}
\renewcommand{\thetheorem}{A.\arabic{theorem}}
\setcounter{theorem}{0}
\renewcommand{\thesection}{A.\arabic{theorem}}
\setcounter{theorem}{0}
\renewcommand{\thesubsection}{A.\arabic{subsection}}
\setcounter{subsection}{0}

This appendix considers a more general model with diffusion coefficients $\sigma (X_t^i, u_t^i, \mu_t^{-i})$ and
 $\sigma_0(X_t^i, u_t^i, \mu_t^{-i})$ before specializing to the form in \eqref{sdeXi}. This will give us more insights into the dynamic programming method.        Below we accordingly denote $\Sigma(x,u,\mu)\coloneqq (\sigma\sigma^T)(x,u,\mu)$
and
$\Sigma_0(x,u,\mu) \coloneqq (\sigma_0\sigma_0^T)(x,u,\mu)$.

Now we take  initial time $t\in [0,T)$ and initial states $(x^1, \cdots, x^N)$. For the feedback control law $\phi$,   denote the controlled state processes
for agents ${\cal A}_j$, $j\ne i$, as
\begin{align}
dX_s^j=& f(X_s^j,
 \phi(s, X_s^j,  \mu_s^{-j}), \mu_s^{-j}) ds
  +\sigma(X_s^j, \phi(s, X_s^j,  \mu_s^{-j}), \mu^{-j}_s) dW_s^j
  \label{dxjapp} \\
  &+\sigma_0(X_s^j,\phi(s, X_s^j,  \mu_s^{-j}), \mu_{s}^{-j}) dW_s^0,\quad s\ge t. \notag
\end{align}
For agent ${\cal A}_i$, we have
\begin{align} \label{dxiapp}
dX_s^i =\ &f(X_s^i, u_s^i, \mu_s^{-i}) +\sigma(X_s^i, u_s^i, \mu^{-i}_{s}) dW_s^i
 +\sigma_0(X_s^i, u_s^i, \mu_{s}^{-i}) dW_s^0,\quad s\ge t,
\end{align}
where we take $u_s^i\equiv u^i\in \mathbb{U}$ for $s\in [t,t+\epsilon)$ and $u_s^i=\phi(s, X_s^i, \mu_s^{-i})$ on $[t+\epsilon, T]$.

\subsection{The control perturbation of ${\cal A}_i$}
\label{sec:pbp}

Note that
$V^\phi(t, x^i, \mu)$ is defined on $[0,T]\times \mathbb{R}^n \times {\mathcal P}^{N-1}_{\rm em}({\mathbb R}^n)$, where ${\mathcal P}^{N-1}_{\rm em}({\mathbb R}^n) $ is only a subset of ${\mathcal P}_2(\mathbb{R}^n)$. We still formally denote the derivative $ \delta_\mu V^\phi(t, x^i, \mu;y)$, which is interpreted
in the following sense: for any $\nu\in{\cal P}_{\rm em}^{N-1}({\mathbb R}^n)$,
\begin{align*}
&V^\phi(t,x^i, \nu)-V^\phi(t,x^i, \mu)
= \int_y \delta_\mu V^\phi(t,x^i, \mu;y) (\nu-\mu) (dy)+ o(W_2(\nu, \mu)).
\end{align*}
We give an example to illustrate.
\begin{example}
Suppose $h(t, x^{i}, \mu^{{-i}})= x^{iT}\Pi_1(t) x^i + \bar x^{(-i)T} \Pi_2(t)\bar x^{(-i)}+ 2 x^{iT} \Pi_{12}(t) \bar x^{(-i)}  $, where $ \bar x^{(-i)}\coloneqq \langle y \rangle_{\mu^{-i}}=\frac{1}{N-1}\sum_{j=1,j\ne i}^N x^j $ and $\Pi_1(t)$ and $\Pi_2(t)$ are symmetric. Then  $$
\delta_\mu h(t, x^i, \mu^{-i}; y)= 2 y^T\Pi_2(t) \langle y \rangle_{\mu^{-i}} + 2 x^{iT} \Pi_{12}(t) y  +\chi(t,x^i, \mu^{-i}),
$$
 where $\chi(t,x^i, \mu^{-i})$ is a normalizing term.
\end{example}

We further formally denote the second order derivative $\delta_{\mu\mu} V^\phi(t, x^i, \mu; y,z)$.
 We make formal usage of  partial derivatives
$\partial_t V^\phi$, $\partial_{x^i} V^\phi$, $\partial_{x^i}^2 V^\phi$,
$\partial_{x^i}\delta_\mu V^\phi(t, x^i, \mu;y)$, $\partial_{x^iy}\delta_\mu V^\phi(t, x^i, \mu;y)$, $\partial_{y}^2\delta_\mu V^\phi(t, x^i, \mu;y)$, $\partial_{yz}\delta_{\mu\mu} V^\phi(t, x^i, \mu;y,z)$.

\subsection{Cost estimate of ${\cal A}_i$}

Under \eqref{dxjapp}--\eqref{dxiapp},
let $J_i$ be given by \eqref{Jidel}.
Taking a fixed constant control $u^i$ (given the initial condition $(t, x^i, \mu^{-i})$) on the small interval
$[t, t+\epsilon]$,
we have the approximation
\begin{align}
J_i=\ & L(x^i, u^i, \mu^{-i}) \deltae + V^\phi(t, x^i, \mu^{-i}) \label{jiex} \\
& + \partial_t V^\phi(t, x^i, \mu^{-i}) \deltae
+  \partial_{x^i}V^\phi (t, x^i, \mu^{-i})
f(x^i, u^i, \mu^{-i}) \deltae \notag  \\
&+ \frac{1}{2} {\rm Tr}
[\partial_{x^i}^2 V^\phi(t, x^i, \mu^{-i}) \Sigma  (x^i,u^i,\mu^{-i}  ) ] \deltae \notag  \\
&+ \frac{1}{2} {\rm Tr}
[\partial_{x^i}^2 V^\phi(t, x^i, \mu^{-i}) \Sigma_0  (x^i,u^i,\mu^{-i}  ) ] \deltae \notag  \\
&+
\EE\langle \Delta\mu^{-i}(dy) ,  \delta_{\mu} V^\phi (t, x^i, \mu^{-i}; y) \rangle  \notag    \\
&+ \EE \langle \Delta\mu^{-i}(dy) ,   \partial_{x^i} \delta_\mu V^\phi (t, x^i, \mu^{-i}; y)\xi_t^i \rangle \notag \\
&+ \frac{1}{2}\EE\int_z\int_y \delta_{\mu\mu} V^\phi (t, x^i, \mu^{-i}; y,z)\Delta \mu^{-i} (dy) \Delta\mu^{-i} (dz) \notag \\
&+ o(\deltae), \notag
\end{align}
where $\Delta\mu^{-i}\coloneqq \mu_{t+\deltae}^{-i}-\mu^{-i}  $ and
$$
\xi_t^i\coloneqq \sigma(x^i,u^i,\mu^{-i})(W_{t+\epsilon}^i-W_t^i)+
\sigma_0(x^i, u^i,\mu^{-i})(W_{t+\deltae}^0-W_t^0).
$$
The three expectations in \eqref{jiex} are needed since the  empirical distribution  $\mu_{t+\epsilon}^{-i} $  is random.
 Within $J_i$, denote
\begin{align*}
{\mathcal K}_1(t,x^i, \mu^{-i}, u^i)\coloneqq \ &L(x^i, u^i, \mu^{-i})+
\partial_{x_i} V^\phi (t, x^i, \mu^{-i})
f(x^i, u^i, \mu^{-i})\\
& + \frac{1}{2} {\rm Tr}
[\partial_{x^i}^2 V^\phi(t, x^i, \mu^{-i}) \Sigma  (x^i,u^i,\mu^{-i}  ) ] \\
&+ \frac{1}{2} {\rm Tr}
[\partial^2_{x^i} V^\phi(t, x^i, \mu^{-i}) \Sigma_0  (x^i,u^i,\mu^{-i}  ) ]  .
\end{align*}
The sum ${\cal K}_1$ explicitly depends on $u^i$.
The remaining components in $J_i$ receive either no or negligible impact from $u^i$.
The optimal choice of $u^i$, however, is not to simply minimize ${\cal K}_1$.
Instead, one should take into account the impact of $u^i$ on all other agents, which  we call the social impact.

We  check the double integral term in  $J_i$.
As $N\to \infty$,  we obtain the approximation
\begin{align*}
&  \frac{1}{2}\EE\int_z\int_y \delta_{\mu\mu} V^\phi (t, x^i, \mu^{-i}; y,z)\Delta \mu^{-i} (dy) \Delta\mu^{-i} (dz)\\
 &\qquad \approx
 \frac{\deltae}{2}\int_z\int_y {\rm Tr}\Big[\partial_{yz}\delta_{\mu\mu} V^\phi (t, x^i, \mu; y,z)\\
&\qquad\qquad\qquad\cdot
\sigma_0(z,\phi(t,z,\mu),\mu)\sigma_0^T(y,\phi(t,y,\mu),\mu)\Big]\mu(dy)\mu(dz),
\end{align*}
where $\mu^{-i}$ has been approximated by $\mu$.

\subsection{Cost estimate for all other agents}

We check how $u_t^i\equiv u^i$ on $[t, t+\deltae]$ affects the cost of
agent ${\cal A}_j$, $j\ne i$. By symmetry of the dynamics of the $N$ agents on  $[t+\deltae, T]$, the cost of
${\cal A}_j$ on $[t, T]$  may be written as
\begin{align}
J_j(t, x^j, \mu^{-j}, {\mathbf u}(\cdot))&=\EE\Big[ \int_t^{t+\deltae}
L(X_s^j,u_s^j,
\mu_s^{-j} )ds+
V^\phi({t+\deltae}, X^j_{t+\deltae}, \mu^{-j}_{t+\deltae} ) \Big],\label{jjV}
\end{align}
where $u_s^j=\phi(s, X_s^j, \mu_s^{-j})$
on the whole interval $[t,T]$. In general $J_j(t,x^j,\mu^{-j}, {\mathbf u}(\cdot))\ne V^\phi(t, x^j, \mu^{-j})$ since $u^i_s$ may differ from  $\phi(s, X_s^i, \mu_s^{-i})$ on $[t, t+\deltae]$.
Then we take the expansion of \eqref{jjV} to formally  obtain
\begin{align}
J_j =\ & L(x^j,  \phi(t,x^j,\mu^{-j}),\mu^{-j}) \deltae+ V^\phi(t, x^j,
\mu^{-j})  \notag   \\
  & +  \partial_t V^\phi(t, x^j, \mu^{-j}) \deltae
  + \partial_{x^j} V^\phi (t, x^j, \mu^{-j})
f(x^j, \phi(t,x^j,\mu^{-j}), \mu^{-j}) \deltae  \notag \\
  & + \frac{1}{2} {\rm Tr}
[\partial^2_{x^j} V^\phi(t, x^j, \mu^{-j}) \Sigma  (x^j,\phi(t,x^j,\mu^{-j}),\mu^{-j}  ) ]\deltae  \notag \\
&+ \frac{1}{2} {\rm Tr}
[\partial^2_{x^j} V^\phi(t, x^j, \mu^{-j}) \Sigma_0  (x^j,\phi(t,x^j,\mu^{-j}),\mu^{-j}  ) ]\deltae  \notag  \\
&+ \EE\langle \Delta\mu^{-j}(dy), \delta_{\mu} V^\phi(t, x^j, \mu^{-j}; y) \rangle \notag  \\
&+  \EE \langle \Delta\mu^{-j} (dy),   \partial_{x^j}\delta_\mu V^\phi (t, x^j, \mu^{-j}; y)\xi_t^j \rangle  \notag  \\
&+ \frac{1}{2}\EE\int_z\int_y \delta_{\mu\mu} V^\phi (t, x^j, \mu^{-j}; y,z)\Delta \mu^{-j} (dy) \Delta\mu^{-j} (dz)+o(\deltae)  \notag \\
  \eqqcolon & J_{j,1}+\cdots+J_{j,9}+o(\deltae), \notag
\end{align}
where
$
\xi_t^j\coloneqq \sigma(x^j,u^j,\mu^{-j})(W_{t+\epsilon}^j-W_t^j)+ \sigma_0(x^j, u^j,\mu^{-j})(W_{t+\deltae}^0-W_t^0)
$ with $u^j=\phi(t, x^j, \mu^{-j})$. The nine terms $J_{j,k}$ are identified by their order of appearance. Note that the first six terms in the sum are not affected by   $u^i\in \mathbb{U}$.

In the following we use the notation $\delta_\mu V^\phi(t, x, \mu;y)$ and $\delta_{\mu\mu} V^\phi(t, x, \mu; y,z)$. Thus $\partial_{yz}\delta_{\mu\mu} V^\phi(t, x^j, \mu; x^k,x^i) $ stands for $ \partial_{yz}\delta_{\mu\mu} V^\phi(t, x^j, \mu; y,z)|_{y=x^k, z=x^i} $.
We have
\begin{align*}
J_{j,7}\coloneqq& \ \EE\langle (\mu_{t+\deltae }^{-j}-\mu^j)(dy), \delta_\mu V^\phi(t, x^j, \mu^{-j}; y) \rangle\\
=&\frac{1}{N-1}\sum_{k \in {\mathcal N} - \{i,j\}  }\EE[\delta_{\mu} V^\phi(t, x^j, \mu^{-j}; X^k_{t+\deltae}) - \delta_{\mu} V^\phi(t, x^j, \mu^{-j}; x^k)] \\
&+ \frac{1}{N-1}\EE[\delta_{\mu} V^\phi(t, x^j, \mu^{-j}; X^i_{t+\deltae})- \delta_\mu V^\phi(t, x^j, \mu^{-j}; x^i)],
\end{align*}
where  the second to last line has a much smaller dependence
on $u^i$ than the last line has.
 Specifically, when $u^i$ has a  change of magnitude $O(1)$, it results in an $O(\deltae)$ change of $X_{t+\deltae}^i$, which in turn causes an $O(\deltae^2/N)$ change of $X_{t+\deltae}^j$.
For $J_{j,7}$, we estimate the second component in the above sum and have
\begin{align*}
J_{j,7,i} &\coloneqq \frac{1}{N-1} \EE[\delta_{\mu} V^\phi(t, x^j, \mu^{-j}; X^i_{t+\deltae})- \delta_{\mu} V^\phi(t, x^j, \mu^{-j}; x^i)] \\
&= \frac{1}{N-1}\Big\{ \partial_{x^i}\delta_{\mu} V^\phi (t, x^j, \mu^{-j}; x^i) f(x^i,  u^i,\mu^{-i}) \deltae  \\
&\quad + \frac{1}{2} {\rm Tr} [\partial^2_{x^i} \delta_{\mu} V^\phi (t, x^j, \mu^{-j}; x^i)\Sigma(x^i,u^i,\mu^{-i}  )]\deltae\\
& \quad +  \frac{1}{2} {\rm Tr} [\partial ^2_{x^i} \delta_{\mu} V^\phi (t, x^j, \mu^{-j}; x^i)\Sigma_0(x^i,u^i,\mu^{-i}  )]\deltae +o(\deltae)\Big\}.
\end{align*}

Next we have
\begin{align*}
J_{j,8}\coloneqq&\ \EE \langle \Delta\mu^{-j}(dy) ,
 \partial_{x^j}\delta_{\mu}V^\phi (t, x^j, \mu^{-j}; y)\xi_t^j \rangle \\
=&\frac{1}{N-1}\EE \sum_{k\in {\mathcal N}_{-j}   }
[  \partial_{x^j}\delta_{\mu}V^\phi (t, x^j, \mu^{-j};
X_{t+\deltae}^k)-\partial_{x^j}\delta_{\mu} V^\phi (t, x^j,
\mu^{-j}; x^k)]\xi_t^j \\
=& \frac{1}{N-1} \sum_{k\in {\mathcal N}_{-j}} {\rm Tr}\Big[\partial_{x^jx^k} \delta_{\mu} V^\phi(t,x^j, \mu^{-j}; x^k)\\
&\qquad\qquad\qquad\qquad\cdot \sigma_0(x^k, u^k, \mu^{-k})\sigma_0^T(x^j, u^j, \mu^{-j})\Big]\deltae+o(\deltae).
\end{align*}
In the last summation $u^l=\phi(t, x^l, \mu^{-l})$ for all $l\in {\cal N}_{-i}$.
Within the above sum for $J_{j,8}$, denote its component with $k=i$, which depends on $u^i$:
\begin{align*}
J_{j,8,i}\coloneqq &\  \frac{\epsilon}{N-1} {\rm Tr}[\partial_{x^jx^i} \delta_{\mu} V^\phi(t,x^j, \mu^{-j}; x^i) \cdot \sigma_0(x^i, u^i, \mu^{-i})\sigma_0^T(x^j, u^j, \mu^{-j})].
\end{align*}

To analyze $J_{j,9}$,
denote
\begin{align*}
\xi_{kl} \coloneqq \ &
\delta_{\mu\mu} V^\phi (t, x^j, \mu^{-j}; X_{t+\deltae}^k,X_{t+\deltae}^l)-\delta_{\mu\mu} V^\phi (t, x^j, \mu^{-j}; x^k,X_{t+\deltae}^l) \\
&-\delta_{\mu\mu} V^\phi (t, x^j, \mu^{-j}; X_{t+\deltae}^k,x^l)+\delta_{\mu\mu} V^\phi (t, x^j, \mu^{-j}; x^k,x^l) .
\end{align*}
We have
\begin{align*}
J_{j,9}:= &\frac{1}{2}\EE\int_z\int_y \delta_{\mu\mu} V^\phi (t, x^j, \mu^{-j}; y,z)\Delta \mu^{-j} (dy) \Delta\mu^{-j} (dz)  \\
= & \frac{1}{2(N-1)^2}
\sum_{ k,l\in {\mathcal N}_{-j} } \EE \xi_{kl}\\
\hskip -1cm =&\frac{1}{2(N-1)^2} \sum_{ k\ne l; k, l\in{\mathcal N}_{-j} }
\Big\{{\rm Tr}\Big[\partial_{yz}\delta_{\mu\mu} V^\phi (t, x^j, \mu^{-j}; x^k,x^l)\\
&\qquad\qquad\qquad\qquad \cdot \sigma_0(x^l,u^l,\mu^{-l})
\sigma_0^T(x^k,u^k,\mu^{-k})\Big]\deltae+o(\deltae)\Big\}\\
& + \frac{1}{2(N-1)^2} \sum_{ k\in {\mathcal N}_{-j} }^N
\Big\{{\rm Tr}\Big[\partial_{yz}\delta_{\mu\mu} V^\phi (t, x^j, \mu^{-j}; x^k,x^k)\\
&\qquad\qquad\qquad\qquad\qquad \cdot(\Sigma+ \Sigma_0)(x^k,u^k,\mu^{-k})\Big]
\deltae+o(\deltae)\Big\},
\end{align*}
where   $u^l=\phi(t,x^l, \mu^{-l})$ for each $l\in {\cal N}_{-j}$.
Subsequently, within the expression of $J_{j,9}$,
we have the following $u^i$ dependent components:
\begin{align*}
 J_{j,9,i}&\coloneqq
\frac{\deltae}{2(N-1)^2} \sum_{ k\in {\cal N} - \{i,j\} }
{\rm Tr}\Big[\partial_{yz}\delta_{\mu\mu} V^\phi (t, x^j, \mu^{-j}; x^k,x^i) \\
&\hskip 4cm \cdot \sigma_0(x^i,u^i,\mu^{-i})\sigma_0^T(x^k,\phi(t,x^k,\mu^{-k}),\mu^{-k})
\Big]\\
&+ \frac{\deltae}{2(N-1)^2} \sum_{l \in{\mathcal N} - \{i,j\} }
{\rm Tr}\Big[\partial_{yz}\delta_{\mu\mu} V^\phi (t, x^j, \mu^{-j}; x^i,x^l) \\&\hskip 4cm\cdot \sigma_0(x^l,\phi(t,x^l,\mu^{-l}),\mu^{-l})\sigma_0^T(x^i,u^i,\mu^{-i})
\Big]\\
&+ \frac{\deltae}{2(N-1)^2} {\rm Tr}[\partial_{yz}\delta_{\mu\mu} V^\phi (t, x^j, \mu^{-j}; x^i,x^i) \cdot (\Sigma_0+\Sigma)(x^i,u^i,\mu^{-i})].
 \\
\end{align*}

\subsection{Approximation}

For large $N$, all the empirical distributions $\mu^{-j}$, $1\le j\le N$, may be approximated by a common $\mu\in P_{\rm em}^{N-1}$.
For $J_{j,7,i}$, denote \begin{align*}
\Gamma_{7}(t,y,\mu, x^i, u^i)
\coloneqq &\ \partial_{x^i} \delta_{\mu} V^\phi (t, y, \mu; x^i) f(x^i,  u^i,\mu)   \\
& + \frac{1}{2} {\rm Tr} [\partial^2_{x^i} \delta_{\mu} V^\phi (t, y, \mu; x^i)\Sigma(x^i,u^i,\mu  )]\\
& +  \frac{1}{2} {\rm Tr} [\partial^2_{x^i} \delta_{\mu} V^\phi (t, y, \mu; x^i)\Sigma_0(x^i,u^i,\mu  )].
\end{align*}
  We take the approximation
  \begin{align}
\sum_{j\in {\mathcal N}_{-i} } J_{j,7,i}
&\approx\deltae
\int_{{\mathbb R}^n}\Gamma_{7}(t,y,\mu, x^i, u^i)\mu(dy)\eqqcolon {\cal K}_2(t,x^i,u^i, \mu)\deltae,  \notag  \\
\sum_{j\in {\mathcal N}_{-i} } J_{j,8,i}&
\approx \deltae\int_{{\mathbb R}^n}{\rm Tr}[\partial_{yx^i} \delta_{\mu} V^\phi(t,y, \mu; x^i)  
\sigma_0(x^i, u^i, \mu)\sigma_0^T(y, \phi(t,y,\mu), \mu)]\mu(dy)\nonumber\\
& \eqqcolon {\cal K}_3(t,x^i,u^i,\mu)\deltae . \notag
\end{align}
Next,
\begin{align*}
& \sum_{j\in {\mathcal N}_{-i} } J_{j,9,i}\\
\approx &  \frac{\deltae }{2N}\sum_{j\ne i}\int_y
{\rm Tr}[\partial_{yx^i}\delta_{\mu\mu} V^\phi (t, x^j, \mu; y,x^i) \cdot
\sigma_0(x^i,u^i,\mu)\sigma_0^T(y,\phi(t,y,\mu),\mu)]\mu(dy)\\
&+\frac{\deltae }{2N}\sum_{j\ne i} \int_z{\rm Tr}[\partial_{x^iz}\delta_{\mu\mu} V^\phi (t, x^j, \mu; x^i,z) \cdot \sigma_0(z,\phi(t,z,\mu),\mu)\sigma_0^T(x^i,u^i,\mu)] \mu(dz)\\
\approx&\frac{\deltae}{2}\int_w\int_y
{\rm Tr}[\partial_{yx^i}\delta_{\mu\mu} V^\phi (t, w, \mu; y,x^i) \cdot
\sigma_0(x^i,u^i,\mu)\sigma^T_0(y,\phi(t,y,\mu),\mu)] \mu(dy)\mu(dw)\\
&+\frac{\deltae }{2} \int_w\int_z{\rm Tr}[\partial_{x^iz}\delta_{\mu\mu} V^\phi (t, w, \mu; x^i,z) \cdot \sigma_0(z,\phi(t,z,\mu),\mu)\sigma_0^T(x^i,u^i,\mu)]\mu(dz)\mu(dw)\\
\eqqcolon& {\cal K}_4(t,x^i,u^i, \mu)\deltae.
\end{align*}

\subsection{Cooperative optimizer selection}

Within the mean field limit, we consider $\mu\in {\mathcal P}_2({\mathbb R}^n)$, and the control law $\phi(t, x, \mu)$.
The function $V^\phi(t, x, \mu)$
is defined on $[0,T]\times \mathbb{R}^n\times {\mathcal P}_2({\mathbb R}^n) $ and does not depend on $N$.
By adding up ${\cal K}_1, {\cal K}_2, {\cal K}_3, $ and ${\cal K}_4$, we define
\begin{align}
& \hskip -0.5cm \Phi^\phi(t, x, u^i,\mu,   V^\phi(\cdot) )\nonumber\\
  \coloneqq &   L(x, u^i, \mu)+
\partial_{x} V^\phi (t, x, \mu)
f(x, u^i, \mu)\nonumber\\
& + \frac{1}{2} {\rm Tr}
[\partial^2_{x} V^\phi(t, x, \mu) \Sigma  (x,u^i,\mu  ) ]+ \frac{1}{2} {\rm Tr}
[\partial^2_{x} V^\phi(t, x, \mu) \Sigma_0  (x,u^i,\mu  ) ] \nonumber \\
&+\Big\langle \mu(dy), \Big\{ \partial_{x} \delta_\mu V^\phi(t, y, \mu; x) f(x,  u^i,\mu) \nonumber  \\
& +{\rm Tr}[ \partial_{xy}\delta_\mu V^\phi(t,y,\mu;x)
\sigma_0(y, \phi(t,y,\mu),\mu)\sigma_0^T(x, u^i, \mu) ]\nonumber \\
& + \frac{1}{2} {\rm Tr} [\partial^2_{x} \delta_\mu V^\phi (t, y, \mu; x)\Sigma(x,u^i,\mu  )]\nonumber\\
& +  \frac{1}{2} {\rm Tr} [\partial^2_{x} \delta_\mu V^\phi (t, y, \mu; x)\Sigma_0(x,u^i,\mu  )]  \Big\}\Big\rangle \nonumber\\
&+\frac{1}{2}\langle \mu^{\otimes 2} (dydw),
{\rm Tr}[\partial_{yx}\delta_{\mu\mu} V^\phi (t, w, \mu; y,x) \cdot
\sigma_0(x,u^i,\mu)\sigma^T_0(y,\phi(t,y,\mu),\mu)]\rangle \nonumber \\
&+\frac{1}{2}\Big\langle \mu^{\otimes 2} (dzdw), {\rm Tr}\Big[\partial_{xz}\delta_{\mu\mu} V^\phi (t, w, \mu; x,z)\nonumber  \\
&\qquad\qquad\qquad\qquad \cdot \sigma_0(z,\phi(t,z,\mu),\mu)\sigma_0^T(x,u^i,\mu)\Big]\Big\rangle,\label{K123}
\end{align}
where the control law $\phi$ has been used by other agents.
We take $\hat u^i$ as a minimizer of $\Phi^\phi$ to obtain
\begin{align}
 \Phi^\phi (t, x, \hat u^i (t,x, \mu), \mu,  V^\phi(\cdot) )
=&\min_{u^i}\Phi^\phi ( t,x,u^i, \mu, V^\phi (\cdot)) .  \label{cpu}
\end{align}
The selection of $ u^i$ takes into account its impact on both $J_i$ and  all other agents' costs.
The remaining step is to specify $\phi$ by a consistency condition to be introduced below.

\subsection{The master equation}
Combining \eqref{Jidel}, \eqref{jiex} and the optimizer selection rule
\eqref{cpu}, we introduce the master equation (as a special HJB equation):
\begin{align*}
&\hskip -1cm - \partial_t V(t, x,\mu)\\
= \ &   V_x (t, x, \mu)
f(x, \hat u^i, \mu) + \frac{1}{2} {\rm Tr}
[\partial^2_{x} V(t, x, \mu) \Sigma  (x,\hat u^i,\mu  ) ] \\
&+ \frac{1}{2} {\rm Tr}
[\partial_{x}^2 V(t, x, \mu) \Sigma_0  (x,\hat u^i,\mu  ) ] + L(x, \hat u^i, \mu)\\
& + \langle\mu(dy),  \partial_y\delta_\mu V(t,x, \mu; y) f( y, \phi(t, y, \mu),\mu)\rangle  \\
&+\frac{1}{2}\langle \mu(dy), {\rm Tr}[
\partial_{y}^2 \delta_\mu
V(t, x, \mu; y) \Sigma(y,
\phi(t,y,\mu),\mu)]\rangle \\
& +\frac{1}{2}\langle \mu(dy), {\rm Tr}[
\partial_{y}^2 \delta_\mu
V(t, x, \mu; y) \Sigma_0(y, \phi(t,y,\mu),\mu)]\rangle \\
&+\langle\mu(dy) ,  {\rm Tr} [ \partial_{xy}\delta_\mu V(t,x, \mu;y) \cdot \sigma_0(y, \phi(t,y,\mu),\mu)\sigma_0^T(x,\hat u^i, \mu) ]\rangle  \\
&+\frac{1}{2}\Big\langle \mu^{\otimes 2}(dydz),  {\rm Tr}\Big[\partial_{yz}\delta_{\mu\mu} V (t, x, \mu; y,z)\\
&\qquad\qquad \quad\qquad\cdot
\sigma_0(z,\phi(t,z,\mu),\mu)\sigma^T_0(y,\phi(t,y,\mu),\mu)\Big]\Big\rangle ,
\end{align*}
where
 $(t,x,\mu)\in[0,T]\times \mathbb{R}^n\times
 {\cal P}_2(\mathbb{R}^n)  $ and the  terminal condition is
$$
V(T, x, \mu)=g(x, \mu).
$$
Moreover, the control law $\phi$ is required to be equal to  the
 optimizer $\hat u^i$, i.e.,
\begin{align}
\hat u^i(t,x,\mu)=&\arg \min_{u^i\in \mathbb{U}}\Phi^\phi(t, x, u^i,\mu,   V(\cdot) ), \\
  \phi (t, x, \mu)=& \hat u^i(t, x, \mu),   \label{pcc}
\end{align}
where \eqref{pcc} is called the consistency condition and
\begin{align}
&\hskip -0.5cm \Phi^\phi(t, x, u^i,\mu,   V(\cdot) )\nonumber\\
  \coloneqq &\   L(x, u^i, \mu)+
 V_x (t, x, \mu)
f(x^i, u^i, \mu)\nonumber\\
& + \frac{1}{2} {\rm Tr}
[\partial^2_{x} V(t, x, \mu) \Sigma  (x,u^i,\mu  ) ]+ \frac{1}{2} {\rm Tr}
[\partial^2_{x} V(t, x, \mu) \Sigma_0  (x,u^i,\mu  ) ] \nonumber \\
&+\Big\langle \mu(dy), \Big\{ \partial_{x} \delta_\mu V(t, y, \mu; x) f(x,  u^i,\mu) \nonumber  \\
&  +{\rm Tr}[ \partial_{xy}\delta_\mu V(t,y,\mu;x)
\sigma_0(y, \phi(t,y,\mu),\mu)\sigma_0^T(x, u^i, \mu) ]\nonumber \\
& + \frac{1}{2} {\rm Tr} [\partial^2_{x} \delta_\mu V (t, y, \mu; x)\Sigma(x,u^i,\mu  )]\nonumber\\
& +  \frac{1}{2} {\rm Tr} [\partial_{x}^2 \delta_\mu V (t, y, \mu; x)\Sigma_0(x,u^i,\mu  )]\Big \}\Big\rangle\nonumber\\
&+\frac{1}{2} \langle \mu^{\otimes2}(dydw),
{\rm Tr}[\partial_{yx}\delta_{\mu\mu} V (t, w, \mu; y,x) \cdot
\sigma_0(x,u^i,\mu)\sigma^T_0(y,\phi(t,y,\mu),\mu)]\rangle \nonumber \\
&+\frac{1}{2}\langle \mu^{\otimes2}(dzdw), {\rm Tr}[\partial_{xz}\delta_{\mu\mu} V (t, w, \mu; x,z) \cdot \sigma_0(z,\phi(t,z,\mu),\mu)\sigma_0^T(x,u^i,\mu)]\rangle,\label{K123b}
\end{align}
where the control law $\phi$ in $\Phi^\phi$ has been used by other agents.

\begin{remark}
 The value function $V$ corresponds to a representative agent  ${\cal A}_i$ interacting with an infinite population.
  \end{remark}

\begin{remark}
When $\sigma$ and $\sigma_0$ are constant matrices, $\Phi^\phi$ in \eqref{K123b} reduces to the simpler form $\Phi$ in Section \ref{sec:me}.
\end{remark}

\section*{Appendix B: Preliminary lemmas and semi-symmetry property}
\renewcommand{\theequation}{B.\arabic{equation}}
\setcounter{equation}{0}
\renewcommand{\thetheorem}{B.\arabic{theorem}}
\setcounter{theorem}{0}
\renewcommand{\thesection}{B.\arabic{theorem}}
\setcounter{theorem}{0}
\renewcommand{\thesubsection}{B.\arabic{subsection}}
\setcounter{subsection}{0}

\begin{lemma} \label{lemma:psixy}
Suppose the function $\psi(x,y)$ from $\mathbb{R}^n\times \mathbb{R}^n$ to $\mathbb{R}$ has continuous partial derivatives $\partial_{xy}\psi(x,y)$ and $\partial_{yx}\psi(x,y)$, and denote $h(x,y)=\partial_{xy}\psi(x,y)$. Then
\begin{align}
&\partial_{yx}\psi(x,y)= (h(x,y))^T, \label{ps1} \\
&\partial_{xy} \psi(y,x) =( h(y,x))^T. \label{ps2}
\end{align}
\end{lemma}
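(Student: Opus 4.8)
The plan is to reduce both identities to the classical equality of second-order mixed partial derivatives (Schwarz's theorem), which is applicable precisely because $\partial_{xy}\psi$ and $\partial_{yx}\psi$ are assumed continuous. The only genuine care needed is bookkeeping: under the ``actual appearance of the variables'' convention of Section~\ref{sec:sub:no}, one must track which argument slot of $\psi$ each subscript in $\partial_{xy}$ and $\partial_{yx}$ refers to, and then recall that transposing an $n\times n$ matrix simply interchanges the row and column indices.

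For \eqref{ps1}: by definition the $(i,j)$-entry of $h(x,y)=\partial_{xy}\psi(x,y)$ is $\partial_{x_i}(\partial_{y_j}\psi)(x,y)$, whereas the $(i,j)$-entry of $\partial_{yx}\psi(x,y)$ is $\partial_{y_i}(\partial_{x_j}\psi)(x,y)$. These are two mixed second partials of the same scalar function, so by continuity $\partial_{y_i}(\partial_{x_j}\psi)=\partial_{x_j}(\partial_{y_i}\psi)$, and the latter is exactly the $(j,i)$-entry of $h(x,y)$; hence $\partial_{yx}\psi(x,y)$ and $h(x,y)^T$ agree entry by entry. For \eqref{ps2}, write $\psi=\psi(a,b)$ to name the two argument slots, so that $h(a,b)=\partial_{ab}\psi(a,b)$ has $(k,\ell)$-entry $\partial_{a_k}(\partial_{b_\ell}\psi)(a,b)$. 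In the expression $\partial_{xy}\psi(y,x)$ the variable $x$ sits in the second slot and $y$ in the first, so its $(i,j)$-entry is the derivative in the $i$-th component of the second-slot variable followed by the $j$-th component of the first-slot variable, evaluated at $(a,b)=(y,x)$; that is, $\partial_{b_i}(\partial_{a_j}\psi)(y,x)$. By continuity this equals $\partial_{a_j}(\partial_{b_i}\psi)(y,x)$, which is the $(j,i)$-entry of $h(y,x)$, i.e. the $(i,j)$-entry of $h(y,x)^T$; this yields \eqref{ps2}.

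I expect the only point requiring attention to be the correct reading of $\partial_{xy}\psi(y,x)$: the row index must be matched with the $x$-differentiation performed in whatever slot $x$ currently occupies, and the column index with the $y$-differentiation in the slot occupied by $y$. Once this is fixed, both identities are immediate from Schwarz's theorem, and nothing beyond the stated continuity of the two mixed partials is used.
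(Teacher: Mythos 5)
Your proof is correct and uses essentially the same argument as the paper: entry-by-entry comparison plus Schwarz's theorem for \eqref{ps1}, and careful tracking of argument slots for \eqref{ps2}. The only cosmetic difference is that the paper obtains \eqref{ps2} by renaming $x\leftrightarrow y$ in \eqref{ps1}, whereas you redo the slot bookkeeping directly; the two are equivalent.
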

\begin{proof}
 The $(i,j)$-th entry in $\partial_{xy}\psi(x,y)$ is $\partial_{x_iy_j}\psi(x,y)$ and  the $(j,i)$-th entry in $\partial_{yx} \psi(x,y)$ is $\partial_{y_jx_i}\psi(x,y)$.
Now \eqref{ps1} follows from Schwarz's theorem \cite{HW08}. Next \eqref{ps1} yields
\eqref{ps2} by switching  $x$ and $y$.
\end{proof}

\begin{lemma}\label{lemma:trDyz}
Let $\bar\Sigma\in \mathbb{R}^{n\times n }$ be symmetric. Suppose
$ \partial_{zy}\delta_\mu V(t, y, \mu; z) $ and $\partial_{yz}\delta_\mu V(t, y, \mu; z)$ are continuous in $(y,z)$.
Then
\begin{align}
&\mbox{\rm Tr}[\partial_{zy}\delta_\mu V(t, y, \mu; z)\bar \Sigma] =\mbox{\rm Tr}[\partial_{yz}\delta_\mu V(t, y, \mu; z)\bar \Sigma], \label{VSig}\\
& \mbox{\rm Tr}[\partial_{yz}\delta_\mu V(t, y, \mu; z)\bar \Sigma]|_{z=y} =
\mbox{\rm Tr}[\partial_{yz}\delta_\mu V(t, z, \mu; y)\bar \Sigma]|_{z=y}.
\label{dzyVS}
\end{align}
\end{lemma}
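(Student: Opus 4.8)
The plan is to deduce both identities from Lemma~\ref{lemma:psixy} together with two elementary facts about the trace, namely $\mathrm{Tr}(M)=\mathrm{Tr}(M^T)$ and its cyclic invariance, combined with the symmetry $\bar\Sigma^T=\bar\Sigma$. Throughout, fix $t$ and $\mu$ and write $F(a,b):=\delta_\mu V(t,a,\mu;b)$, regarded as a scalar function of two vector variables, with $a$ occupying the ``first'' space slot of $\delta_\mu V$ and $b$ the slot after the semicolon; the stated continuity of the mixed second derivatives lets us invoke Schwarz's theorem freely. The one point requiring care is that, by the convention of Section~\ref{sec:sub:no}, the subscripts on $\partial_{yz}$ refer to the variables as they actually appear in the expression, not to fixed slot positions, so that when the arguments of $\delta_\mu V$ are swapped the roles of the two differentiations are swapped as well.

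For \eqref{VSig} I would first note that the $(i,j)$ entry of $\partial_{zy}\delta_\mu V(t,y,\mu;z)$ is $\partial_{z_i}\partial_{y_j}F(y,z)$, which by Schwarz equals $\partial_{y_j}\partial_{z_i}F(y,z)$, i.e.\ the $(j,i)$ entry of $\partial_{yz}\delta_\mu V(t,y,\mu;z)$. Writing $h:=\partial_{yz}\delta_\mu V(t,y,\mu;z)$, this says $\partial_{zy}\delta_\mu V(t,y,\mu;z)=h^T$, which is exactly equation \eqref{ps1} of Lemma~\ref{lemma:psixy}. Then $\mathrm{Tr}[h^T\bar\Sigma]=\mathrm{Tr}[(h^T\bar\Sigma)^T]=\mathrm{Tr}[\bar\Sigma^T h]=\mathrm{Tr}[\bar\Sigma h]=\mathrm{Tr}[h\bar\Sigma]$, using transpose-invariance of the trace, the symmetry of $\bar\Sigma$, and cyclicity; this is \eqref{VSig}.

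For \eqref{dzyVS} the crux is to identify the two matrices after the substitution $z=y$. The matrix $\partial_{yz}\delta_\mu V(t,y,\mu;z)$ has $(i,j)$ entry $(\partial_{a_i}\partial_{b_j}F)(y,z)$, while in $\partial_{yz}\delta_\mu V(t,z,\mu;y)$ the first slot now holds $z$ and the after-semicolon slot holds $y$, so $\partial_y$ differentiates the second slot and $\partial_z$ the first, giving $(i,j)$ entry $\partial_{y_i}\partial_{z_j}[F(z,y)]=(\partial_{b_i}\partial_{a_j}F)(z,y)$. Hence, at $z=y$, the matrix $\partial_{yz}\delta_\mu V(t,z,\mu;y)|_{z=y}$ is the transpose of $\partial_{yz}\delta_\mu V(t,y,\mu;z)|_{z=y}$ (equivalently, this is the instance \eqref{ps2} of Lemma~\ref{lemma:psixy} read at $x=y$). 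The same trace manipulation as above, pulling the transpose out, using $\bar\Sigma^T=\bar\Sigma$, and then cyclicity, now yields \eqref{dzyVS}. Since the argument is essentially bookkeeping, I do not anticipate a genuine obstacle; the only delicate step is keeping straight which symbol occupies the first versus the after-semicolon slot when variables are swapped, so I would state the slot convention explicitly at the outset and, if clarity is needed, record the intermediate identity $\partial_{yz}\delta_\mu V(t,z,\mu;y)|_{z=y}=\bigl(\partial_{yz}\delta_\mu V(t,y,\mu;z)|_{z=y}\bigr)^T$ before passing to traces.
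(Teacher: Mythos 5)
Your proof is correct and follows exactly the paper's route: both rely on the Schwarz-theorem identity of Lemma~\ref{lemma:psixy} to express the relevant matrix as a transpose, combined with the trace identities $\mathrm{Tr}(D\bar\Sigma)=\mathrm{Tr}(D^T\bar\Sigma)$ for symmetric $\bar\Sigma$. The only difference is that you spell out the slot-bookkeeping in more detail, which the paper leaves implicit.
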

\begin{proof}
 If $D$ and $\bar \Sigma$ are  $n\times n$ matrices and $\bar \Sigma$ is symmetric, then we have
\begin{align}
 \mbox{Tr} ( D\bar \Sigma )=\mbox{Tr} ( \bar \Sigma D) = \mbox{Tr}[ ( \bar \Sigma D)^T]= \mbox{Tr} (D^T\bar \Sigma),\label{Dtr}
\end{align}
which together with  Lemma \ref{lemma:psixy}  yields \eqref{VSig}.
Denote $h(y,z)\coloneqq \partial_{yz} \delta_\mu V(t, y,\mu;z)$.
By Lemma \ref{lemma:psixy}, $\partial_{yz}\delta_\mu V(t, z, \mu; y)=(h(z,y))^T$,  which combined with \eqref{Dtr} implies \eqref{dzyVS}.
\end{proof}

Consider a continuous function $\psi(t,s)$: $R_c=[0,T]\times [0, c) \to \mathbb{R}$, where $c>0$. Define the partial derivatives  $\partial_t\psi(t,s)$, $\partial_s\psi(t,s)$ and $\partial_t\partial_s\psi(t,s)$  on $R_c$, where each partial derivative is interpreted as a one-sided derivative when we take $\partial_t$ ($\partial_s $, resp.) at $t=0$ or $t=T$ (at $s=0$, resp.).
For instance,  for $0\le t\le T$, we have
$$\partial_s\psi (t,0)\coloneqq  \lim_{\epsilon\to 0^+} \frac{\psi(t, \epsilon)- \psi(t, 0)}{\epsilon}.
$$

The next lemma extends the symmetry property of second order partial derivatives in Schwarz's theorem \cite{HW08} to the case of  boundary points of a region.
The proof uses essentially the same argument as in \cite[p.317]{HW08} and is omitted here.
\begin{lemma}\label{lemma:sch}
Suppose $\psi$ satisfies the following conditions:

(i) The partial derivatives
$\partial_t\psi(t,s)$, $\partial_s\psi(t,s)$, and $\partial_t\partial_s\psi(t,s)$ exist on $R_c$.

(ii) $\partial_t\partial_s\psi(t,s)$ is continuous at the point $(t_0, 0)$
for $t_0\in [0,T]$.

Then $ \partial_s\partial_t\psi(t_0,0)$ exists and
$\partial_s\partial_t\psi(t_0,0)= \partial_t\partial_s\psi(t_0,0).
$
\end{lemma}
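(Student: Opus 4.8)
The plan is to run the classical proof of Schwarz's theorem via the mixed second difference, paying attention to the fact that at the corner $(t_0,0)$ of $R_c$ the $s$-increment can only be taken positive and the $t$-increment must respect whether $t_0$ is $0$, $T$, or interior. Concretely, for $0<k<c$ and $h\ne0$ small with $t_0+h\in[0,T]$, set
\[
A(h,k)\coloneqq \psi(t_0+h,k)-\psi(t_0+h,0)-\psi(t_0,k)+\psi(t_0,0).
\]
First I would freeze $h$ and apply the mean value theorem to $G(s)\coloneqq\psi(t_0+h,s)-\psi(t_0,s)$ on $[0,k]$: by hypothesis (i) the derivative $G'(s)=\partial_s\psi(t_0+h,s)-\partial_s\psi(t_0,s)$ exists on $(0,k)$ and $G$ is continuous on $[0,k]$, so $A(h,k)=k\big[\partial_s\psi(t_0+h,\eta_1 k)-\partial_s\psi(t_0,\eta_1 k)\big]$ for some $\eta_1=\eta_1(h,k)\in(0,1)$. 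Then, with $s^{*}=\eta_1 k$ held fixed, I would apply the mean value theorem once more to $H(t)\coloneqq\partial_s\psi(t,s^{*})$, whose $t$-derivative $\partial_t\partial_s\psi(t,s^{*})$ exists on $R_c$ again by (i); this gives $\eta_2=\eta_2(h,k)\in(0,1)$ with
\[
A(h,k)=hk\,\partial_t\partial_s\psi\big(t_0+\eta_2 h,\ \eta_1 k\big).
\]

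Next I would let $(h,k)\to(0,0)$ along the admissible signs. Since $\big(t_0+\eta_2 h,\eta_1 k\big)\to(t_0,0)$ and $\partial_t\partial_s\psi$ is continuous at $(t_0,0)$ by hypothesis (ii), dividing by $hk$ yields
\[
\lim_{(h,k)\to(0,0)}\frac{A(h,k)}{hk}=\partial_t\partial_s\psi(t_0,0)\eqqcolon L .
\]
To recover the mixed partial in the other order I would instead send $h\to0$ first with $k>0$ fixed; by hypothesis (i) the limits $\partial_t\psi(t_0,k)$ and $\partial_t\psi(t_0,0)$ exist, so
\[
\lim_{h\to0}\frac{A(h,k)}{hk}=\frac{\partial_t\psi(t_0,k)-\partial_t\psi(t_0,0)}{k}.
\]
Combining the last two displays through the $\epsilon$-$\delta$ form of the double limit shows that the quotient on the right is within any prescribed tolerance of $L$ for all sufficiently small $k>0$; hence $\lim_{k\to0^{+}}\big[\partial_t\psi(t_0,k)-\partial_t\psi(t_0,0)\big]/k$ exists and equals $L$. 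By definition this limit is the (one-sided, since $s\ge0$) derivative $\partial_s\partial_t\psi(t_0,0)$, so it exists and equals $L=\partial_t\partial_s\psi(t_0,0)$, which is the assertion.

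The verifications I would not belabor are the bookkeeping of sign choices ($h>0$ if $t_0=0$, $h<0$ if $t_0=T$, either sign otherwise, and always $k>0$) and the observation that the one-sided partial derivatives guaranteed by (i) at the boundary of $R_c$ still make $G$ and $H$ continuous up to the relevant endpoints, so that the mean value theorem applies on the closed subintervals. The one genuinely delicate point I would state carefully is the passage in the previous paragraph: it rests on the elementary fact that if $F(h,k)\to L$ as $(h,k)\to0$ and $\lim_{h\to0}F(h,k)=g(k)$ exists for each fixed $k$, then $\lim_{k\to0}g(k)=L$ --- here applied with the one-sided constraint $k>0$. This is exactly the argument of \cite[p.~317]{HW08}, now localized to the corner point $(t_0,0)$.
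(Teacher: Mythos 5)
Your proof is correct, and it is precisely the argument the paper has in mind: the paper omits the proof and simply states that it "uses essentially the same argument as in \cite[p.317]{HW08}," which is the classical mixed-second-difference plus double-mean-value-theorem proof of Schwarz's theorem that you carry out, localized to the corner $(t_0,0)$ with one-sided increments in $s$. The one subtlety you flag — passing from the double limit of $A(h,k)/(hk)$ to the iterated limit $\lim_{k\to 0^+}\lim_{h\to 0}$ — is handled correctly and is exactly where hypothesis (ii) (continuity of $\partial_t\partial_s\psi$ at $(t_0,0)$) is used.
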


\begin{lemma}\label{lemma:dtdmu}
Suppose $\hat V(t,x,\mu)$ is a function from
$ [0,T]\times \mathbb{R}^n\times {\mathcal P}_2(\mathbb{R}^n)$ to
$\mathbb{R} $, satisfying the following conditions:

(i) $\partial_t\hat V(t,x,\mu)$, $\delta_\mu \hat V(t, x, \mu;y)  $ and $\partial_t \delta_\mu \hat V(t, x, \mu; y)$  exist and are jointly continuous in
$(t, x, \mu) \in [0,T]\times \mathbb{R}^{n} \times {\mathcal P}_2(\mathbb{R}^n)$ and $(t, x, y,\mu) \in [0,T]\times \mathbb{R}^{2n} \times {\mathcal P}_2(\mathbb{R}^n)$, respectively;

(ii) for each constant  $K>0$,
\begin{align*}
| \delta_\mu \hat V(t, x, \mu; y)|,\
|\partial_t \delta_\mu \hat V(t, x, \mu; y)|\le C_K (1+|y|^2)
\end{align*}
holds for all $t\in [0,T]$, $x,y\in \mathbb{R}^n$, $\mu\in {\mathcal P}_2(\mathbb{R}^n)$ whenever $|x|\le K$ and $W_2(\mu,\delta_0)\le K$ (equivalently, $\langle \mu, |y|^2\rangle\le K^2$), where $C_K$ is a constant depending  on $K$.

 Then the derivative  $ \delta_\mu (\partial_t\hat V(t, x, \mu) )(y)$ exists, and moreover,
 \begin{align}\label{Vuttu}
 \delta_\mu (\partial_t\hat V(t, x, \mu) )(y)= \partial_t \delta_\mu \hat V(t, x, \mu; y)
\end{align}
for all
$t\in [0,T],x,y\in \mathbb{R}^n,\mu\in {\mathcal P}_2(\mathbb{R}^n)$.
\end{lemma}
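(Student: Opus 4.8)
The plan is to recognize $\partial_t\delta_\mu\hat V$ as the linear functional derivative of $\partial_t\hat V$ by reducing everything to an interchange‑of‑partial‑derivatives statement along linear interpolations of measures and then invoking Lemma \ref{lemma:sch}. Fix $(x,\mu)$ and an arbitrary $\nu\in{\mathcal P}_2({\mathbb R}^n)$; set $\mu_s\coloneqq\mu+s(\nu-\mu)\in{\mathcal P}_2({\mathbb R}^n)$ for $s\in[0,1]$ and define $\psi(t,s)\coloneqq\hat V(t,x,\mu_s)$ on $[0,T]\times[0,1]$. By the definition of $\delta_\mu$, $\partial_s\psi(t,s)=\langle(\nu-\mu)(dy),\delta_\mu\hat V(t,x,\mu_s;y)\rangle$, and by hypothesis (i), $\partial_t\psi(t,s)=\partial_t\hat V(t,x,\mu_s)$. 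The lemma will follow once $s\mapsto\partial_t\psi(t,s)$ is shown to be $C^1$ on $[0,1]$ with derivative $\langle(\nu-\mu)(dy),\partial_t\delta_\mu\hat V(t,x,\mu_s;y)\rangle$, since the fundamental theorem of calculus then yields
\[
\partial_t\hat V(t,x,\nu)-\partial_t\hat V(t,x,\mu)=\int_0^1\big\langle(\nu-\mu)(dy),\,\partial_t\delta_\mu\hat V(t,x,\mu_s;y)\big\rangle\,ds,
\]
which is precisely the defining relation of a linear functional derivative and identifies it as $\partial_t\delta_\mu\hat V$ once the normalization $\langle\mu(dy),\partial_t\delta_\mu\hat V(t,x,\mu;y)\rangle=0$ is checked; the latter comes from differentiating the identity $\langle\mu(dy),\delta_\mu\hat V(t,x,\mu;y)\rangle\equiv0$ in $t$ under the integral sign, which is legitimate by the domination bound in (ii).

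To obtain the $C^1$ statement, I first note that $\partial_t\partial_s\psi(t,s)$ exists and is continuous: differentiating $\partial_s\psi(t,s)=\langle(\nu-\mu)(dy),\delta_\mu\hat V(t,x,\mu_s;y)\rangle$ in $t$ is permitted because $|\partial_t\delta_\mu\hat V(t,x,\mu_s;y)|\le C_K(1+|y|^2)$ uniformly for $t\in[0,T]$, where $K$ is chosen with $|x|\le K$ and $\sup_{s\in[0,1]}\langle\mu_s,|y|^2\rangle\le K^2$ (finite since $\mu,\nu\in{\mathcal P}_2$ and $\mu_s$ is a convex combination), while $\nu-\mu$ integrates $1+|y|^2$. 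Hence $\partial_t\partial_s\psi(t,s)=\langle(\nu-\mu)(dy),\partial_t\delta_\mu\hat V(t,x,\mu_s;y)\rangle$, which is jointly continuous in $(t,s)$ by the joint continuity of $\partial_t\delta_\mu\hat V$ and dominated convergence (again using (ii)); moreover $\partial_t\psi$ and $\partial_s\psi$ exist on $[0,T]\times[0,1]$. Thus at every point the hypotheses of Lemma \ref{lemma:sch} are met: at interior $s_0\in(0,1)$ this is the classical Schwarz theorem, and at the endpoints $s_0\in\{0,1\}$ it is Lemma \ref{lemma:sch} (applied directly at $s_0=0$, and after the reflection $s\mapsto1-s$, which swaps the roles of $\mu$ and $\nu$, at $s_0=1$). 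Therefore $\partial_s\partial_t\psi(t,s)$ exists and equals $\partial_t\partial_s\psi(t,s)$ on all of $[0,T]\times[0,1]$, so $s\mapsto\partial_t\psi(t,s)$ is $C^1$ with the asserted derivative, completing the proof of \eqref{Vuttu}.

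The one place demanding genuine care, rather than routine bookkeeping, is the differentiation under the integral sign and the resulting continuity of $\partial_t\partial_s\psi$: one must verify that the growth bounds in (ii) are exactly strong enough, \emph{uniformly over the interpolation segment} $\{\mu_s\}_{s\in[0,1]}$, to dominate $\partial_t\delta_\mu\hat V$ against the finite signed measure $\nu-\mu$. Once that is pinned down, Lemma \ref{lemma:sch} supplies the interchange of derivatives and the identity \eqref{Vuttu} is immediate. Finally, I would remark that all $t$‑derivatives at $t=0$ and $t=T$ are read as one‑sided, consistently with the convention fixed before Lemma \ref{lemma:sch}, so the interchange and the final identity hold up to the temporal boundary as well.
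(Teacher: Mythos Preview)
Your proof is correct and follows essentially the same route as the paper: define $\psi(t,s)=\hat V(t,x,\mu+s(\nu-\mu))$, compute $\partial_t\partial_s\psi$ via dominated convergence using condition (ii), and invoke Lemma~\ref{lemma:sch} to interchange derivatives, while checking the normalization of $\partial_t\delta_\mu\hat V$ by differentiating $\langle\mu,\delta_\mu\hat V\rangle\equiv0$. The only cosmetic difference is that the paper works solely at $s=0$ and reads off the directional derivative there, whereas you establish $C^1$ regularity on all of $[0,1]$ and conclude via the integral form of the linear functional derivative; both formulations are equivalent under the continuity assumed in (i).
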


\begin{proof}

Step 1. We show the normalization property
of $ \partial_t \delta_\mu \hat V(t, x, \mu; y)$.
By the normalization property of $\delta_\mu \hat V$, we have
$\langle \mu(dy), \delta_\mu \hat V(t, x, \mu; y)\rangle=0$,   
which implies
\begin{align*}
0&=\partial_t \langle \mu(dy), \delta_\mu \hat V(t, x, \mu; y)\rangle= \langle \mu(dy),\partial_t \delta_\mu \hat V(t, x, \mu; y)\rangle,
\end{align*}
due to condition (ii) and an application of the dominated convergence theorem.

Step 2.
Let $\mu_1$ and $\mu_2$ both from ${\mathcal P}_2(\mathbb{R}^n)$ be fixed.
For $t\in [0,T]$ and $s\in [0,1]$, set
\begin{align}
\psi(t,s) \coloneqq  \hat V(t, x, \mu_1+s\nu), \notag 
\end{align}
where $\nu= \mu_2-\mu_1$. We have
\begin{align}
\partial_t \psi(t,s)= \partial_t \hat V(t,x,\mu_1+s\nu) \label{ptstV}
\end{align}
for $(t,s)\in [0,T]\times [0,1]$.
We proceed to check $\partial_s \psi(t,s)$.
Suppose $s\in (0, 1)$. Then $\delta_\mu \hat V$ evaluated  at $\mu_1+s\nu$ gives
\begin{align*}
\lim_{\epsilon \downarrow 0}\frac{\psi(t, s+\epsilon)-\psi(t, s) }{\epsilon }=\langle \nu(dy), \delta_\mu \hat V(t,x, \ \mu_1+s\nu; y)\rangle.
\end{align*}
Next, we have
\begin{align*}
\lim_{\epsilon \downarrow 0}\frac{\psi(t, s-\epsilon)-\psi(t, s) }
{-\epsilon }= &-\lim_{\epsilon \downarrow 0} \frac{\hat V(t, x, \mu_1+s\nu+\epsilon (\mu_1-\mu_2))- \hat V(t, x, \mu_1+s\nu)   }{\epsilon }   \\
=&-\langle (\mu_1-\mu_2)(dy), \delta_\mu \hat V(t,x , \mu_1+s\nu; y)\rangle \\
=& \langle \nu(dy), \delta_\mu \hat V(t,x ,\mu_1+s\nu; y)\rangle.
\end{align*}
The second equality results from the definition of $\delta_\mu \hat V  $.
We  similarly obtain the one-sided derivative of $\psi(t,\cdot) $ as $\langle \nu(dy), \delta_\mu \hat V(t,x \, \mu_1+s\nu; y)\rangle$ at $s=0$ and $s=T$. Therefore for all $s\in [0,T]$,   $\partial_s \psi(t,s)$ exists   and
\begin{align}
\partial_s \psi(t,s)= \langle \nu(dy), \delta_\mu \hat V(t,x , \mu_1+s\nu; y)\rangle. \label{pspsits}
\end{align}
Subsequently, by \eqref{pspsits}, for each fixed $t\in (0,T)$, we have
\begin{align}
\partial_t\partial_s \psi(t,s)&=\partial_t \langle \nu(dy), \delta_\mu \hat V(t,x , \mu_1+s\nu; y)\rangle \notag  \\
&= \langle \nu(dy),\partial_t \delta_\mu \hat V(t,x ,
\mu_1+s\nu; y)\rangle, \quad s\in [0,T], \label{ptsmu1}
\end{align}
where the second equality results from   condition (ii). We similarly obtain \eqref{ptsmu1} for  $t=0$ and $t=T$.

Step 3. As $(t',s')\to (t,s)\in [0,T]\times [0,1]$, we may show $W_2(\mu_1+s'\nu, \mu_1+s\nu)\to 0$ using \cite[Theorem 6.9]{V09}, and therefore $\partial_t\delta_\mu \hat V (t',x, \mu_1+s'\nu; y)\to \partial_t\delta_\mu \hat V(t,x,\mu_1+s\nu;y)$ by condition (i). Subsequently, $\partial_t\partial_s \psi(t',s') \to\partial_t\partial_s \psi(t,s) $ by
\eqref{ptsmu1} and the dominated convergence theorem under condition (ii). Hence $\partial_t\partial_s \psi(t,s)$ is  continuous in $(t,s)\in [0,T]\times [0,1]$.  Therefore by Lemma \ref{lemma:sch}, $\partial_s\partial_t \psi(t,s)$ exists  at each point $(t, 0)$, $t\in [0,T]$, and moreover,
\begin{align}
 \partial_s\partial_t \psi(t,0) = \partial_t\partial_s \psi(t,0)=\langle \nu(dy),\partial_t \delta_\mu \hat V(t,x \, \mu_1; y)\rangle,
\label{ppstV1}
\end{align}
where the second equality follows from \eqref{ptsmu1}.
By \eqref{ptstV} and \eqref{ppstV1}, we have
\begin{align}
\lim_{\epsilon \downarrow 0 } \frac{1}{\epsilon}[ \partial_t \hat V(t, x, \mu_1+\epsilon \nu)-\partial_t \hat V(t, x, \mu_1) ]
& =\partial_s\partial_t \psi(t,0)\notag \\
& =\langle \nu(dy),\partial_t \delta_\mu \hat V(t,x \, \mu_1; y)\rangle \notag
\end{align}
for all $\mu_1\in {\mathcal P}_2(\mathbb{R}^n)$.
So $\delta_\mu (\partial_t \hat V(t,x,\mu_1))(y)$ exists. Recalling Step 1,
 we obtain \eqref{Vuttu}.
\end{proof}

\begin{proposition}  (\cite{CDLL19})
Suppose that both $\delta_{\mu}\psi(\mu;y)$ and $\psi_{\mu\mu}(\mu;y,z)$
exist and are jointly continuous in  their arguments and
that for each $K>0$,  $|\delta_{\mu}\psi(\mu;y)|, |\delta_{\mu\mu} \psi(\mu; y,z)|\le C_K(1+|y|^2+|z|^2)$ holds for all $y, z\in \mathbb{R}^n$ whenever $\mu$ satisfies $W_2(\mu, \delta_0)\le K$.
Then
\begin{align}\label{ssp}
\delta_{\mu\mu}\psi(\mu; y,z) =\delta_{\mu\mu} \psi(\mu; z,y)
+\delta_\mu \psi(\mu;y) -\delta_\mu \psi(\mu; z).
\end{align}
\end{proposition}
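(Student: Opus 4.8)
The plan is to exploit Schwarz's symmetry for the second-order derivative of $\psi$ taken along two independent perturbation directions in $\mathcal{P}_2(\mathbb{R}^n)$, and then to pass from the resulting bilinear identity to the pointwise identity \eqref{ssp} by using the one-sided normalization conventions on $\delta_\mu\psi$ and $\delta_{\mu\mu}\psi$.

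\emph{Step 1: a bilinear symmetry.} Fix $\mu\in\mathcal{P}_2(\mathbb{R}^n)$ and arbitrary $m_1,m_2\in\mathcal{P}_2(\mathbb{R}^n)$; write $\nu_i=m_i-\mu$ and, for $(s,t)\in[0,\tfrac12)\times[0,\tfrac12)$, set $\mu_{s,t}\coloneqq (1-s-t)\mu+sm_1+tm_2\in\mathcal{P}_2(\mathbb{R}^n)$ and $h(s,t)\coloneqq \psi(\mu_{s,t})$. Using the defining integral identities for $\delta_\mu\psi$ and $\delta_{\mu\mu}\psi$ together with the affine dependence of $\mu_{s,t}$ on $(s,t)$, I would establish, for all $(s,t)$ in the square,
\[
\partial_s h(s,t)=\langle\nu_1(dy),\delta_\mu\psi(\mu_{s,t};y)\rangle,\qquad
\partial_t\partial_s h(s,t)=\langle\nu_1(dy)\nu_2(dz),\delta_{\mu\mu}\psi(\mu_{s,t};y,z)\rangle,
\]
and, symmetrically, $\partial_s\partial_t h(s,t)=\langle\nu_1(dy)\nu_2(dz),\delta_{\mu\mu}\psi(\mu_{s,t};z,y)\rangle$. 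All the exchanges of one-sided derivatives with the $\mu_{s,t}$-integrals are licensed by the quadratic growth bounds on $\delta_\mu\psi,\delta_{\mu\mu}\psi$ (which apply since $\langle\mu_{s,t},|w|^2\rangle$ stays bounded once $m_1,m_2$ are fixed) and by the $W_2$-continuity of $(s,t)\mapsto\mu_{s,t}$, exactly as in the proof of Lemma \ref{lemma:dtdmu}. Joint continuity of $\delta_{\mu\mu}\psi$ makes both $\partial_t\partial_s h$ and $\partial_s\partial_t h$ jointly continuous, hence equal on the open square by Schwarz's theorem and equal at the corner $(0,0)$ either by continuity or by Lemma \ref{lemma:sch}. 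Writing $\Delta(\mu;y,z)\coloneqq \delta_{\mu\mu}\psi(\mu;y,z)-\delta_{\mu\mu}\psi(\mu;z,y)$, this yields
\[
\langle\nu_1(dy)\nu_2(dz),\Delta(\mu;y,z)\rangle=0\qquad\text{for all }m_1,m_2\in\mathcal{P}_2(\mathbb{R}^n).
\]

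\emph{Step 2: from bilinear to pointwise.} Specializing $m_1=\delta_{y}$ and $m_2=\delta_{z}$ and expanding $\nu_i=m_i-\mu$ gives
\[
\Delta(\mu;y,z)=A(y)+B(z)-C,
\]
with $A(y)\coloneqq \langle\mu(dz),\Delta(\mu;y,z)\rangle$, $B(z)\coloneqq \langle\mu(dy),\Delta(\mu;y,z)\rangle$ and $C\coloneqq \langle\mu^{\otimes 2}(dydz),\Delta(\mu;y,z)\rangle$. The normalization $\langle\mu(dz),\delta_{\mu\mu}\psi(\mu;y,z)\rangle=0$ disposes of every term in which the \emph{second} slot of $\delta_{\mu\mu}\psi$ is integrated against $\mu$. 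For the terms in which the \emph{first} slot is integrated against $\mu$, I would first derive the auxiliary relation
\[
\langle\mu(dy),\delta_{\mu\mu}\psi(\mu;y,z)\rangle=-\,\delta_\mu\psi(\mu;z)
\]
by differentiating the identity $\langle\nu,\delta_\mu\psi(\nu;\cdot)\rangle=0$ at $\nu=\mu$ along the curve $s\mapsto\mu+s(m-\mu)$ (product rule, then specialize $m=\delta_z$ and use the normalizations just mentioned). Combining these facts gives $A(y)=\delta_\mu\psi(\mu;y)$, $B(z)=-\delta_\mu\psi(\mu;z)$, $C=0$, and substitution into the displayed formula for $\Delta$ produces $\delta_{\mu\mu}\psi(\mu;y,z)-\delta_{\mu\mu}\psi(\mu;z,y)=\delta_\mu\psi(\mu;y)-\delta_\mu\psi(\mu;z)$, which is precisely \eqref{ssp}.

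The main obstacle is the careful bookkeeping: since the normalization of $\delta_{\mu\mu}\psi$ is one-sided (only the last argument has zero $\mu$-mean), one must track at every step \emph{which} slot is being integrated, and the first-slot relation has to be produced separately from the differentiated normalization of $\delta_\mu\psi$. The other delicate point is the rigorous justification, via the quadratic growth hypotheses and the restriction to $W_2$-bounded curves of measures, of the interchanges of $\partial_s$ (or $\partial_t$) with the $\mu_{s,t}$-integrals, together with the use of the boundary version of Schwarz's theorem (Lemma \ref{lemma:sch}) in place of its interior form, since $(0,0)$ is a corner of the parameter square.
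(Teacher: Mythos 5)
Your proposal is correct, and it is essentially the argument from \cite{CDLL19} that the paper itself cites without repeating: the paper's entire proof is the remark that the bounded-derivatives proof in \cite{CDLL19} ``can be easily adapted to the quadratic growth case,'' and your two-parameter affine curve $\mu_{s,t}$, the Schwarz-at-the-corner step, the specialization $m_i=\delta_{\cdot}$, and the differentiated normalization identity $\langle\mu(dy),\delta_{\mu\mu}\psi(\mu;y,z)\rangle=-\delta_\mu\psi(\mu;z)$ are precisely the adaptation. The only details you leave implicit are the ones the paper leaves implicit as well, namely the dominated-convergence interchanges under the quadratic growth bounds along the $W_2$-compact curve $(s,t)\mapsto\mu_{s,t}$, and you have flagged these accurately.
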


The equality \eqref{ssp} will be called the semi-symmetry property of $\delta_{\mu\mu}\psi $ and has been proved in \cite{CDLL19} for bounded derivatives. But the proof there can be easily adapted to the quadratic growth case.

\section*{Appendix C: Proof of Theorem \ref{theorem:U}}
\renewcommand{\theequation}{C.\arabic{equation}}
\setcounter{equation}{0}
\renewcommand{\thetheorem}{C.\arabic{theorem}}
\setcounter{theorem}{0}
\renewcommand{\thesubsection}{C.\arabic{subsection}}
\setcounter{subsection}{0}

It is clear that \eqref{defUU} satisfies the terminal condition \eqref{UTm}.
We will next differentiate both sides of equation \eqref{MEVwoCN} of $V$, and use further transformations to generate several equations. By adding up these equations, we can verify the solution \eqref{defUU} for $U$.

\subsection{The  equation of $V$}
 We redisplay the master equation of $V$:
\begin{align}
0=&\ \partial_t V(t, x, \mu)+   V_x (t, x, \mu)
f^*(t,x,  \mu)
 \notag \\
&+ \frac{1}{2} {\rm Tr}
[ V_{xx}(t, x, \mu)(\Sigma+ \Sigma_0) ] + L^*(t,x,  \mu)\notag \\
& +\langle  \mu(dz), \partial_z\delta_\mu V(t,x, \mu; z) f^*(t, z,\mu)\rangle \notag  \\
& +\frac{1}{2}\langle  \mu(dz), {\rm Tr}[
\partial_{z}^2 \delta_\mu
V(t, x, \mu; z) (\Sigma+\Sigma_0)]\rangle \notag \\
&+ \langle \mu(dz), {\rm Tr} [ \D_{xz}\delta_\mu V(t,x, \mu;z)  \Sigma_0 ]\rangle \notag  \\
&+\frac{1}{2}\langle \mu^{\otimes2} (dzdw), {\rm Tr}[\D_{zw}\delta_{\mu\mu} V (t, x, \mu; z,w)
\Sigma_0]\rangle ,\notag
\end{align}
where
 $(t,x,\mu)\in[0,T]\times \mathbb{R}^n\times
 {\cal P}_2(\mathbb{R}^n)  $.

\subsection{The equation  of $\partial_t\delta_\mu V$}

Using $x_i$ within $x=(x_1, \cdots, x_n)^T$ in place of
$t$ in Lemma \ref{lemma:dtdmu} and recalling the growth conditions of ${\cal C}_V$, we can show
$$
\partial_x\delta_\mu V(t,x,\mu;y)=(\delta_\mu \partial_x V(t,x,\mu))(y), \quad  \partial_{x}^2\delta_\mu V(t,x,\mu;y)=(\delta_\mu \partial_x^2 V(t,x,\mu))(y).
$$

Taking measure differentiation of the master equation of $V$ above and using $y$ as the newly generated variable, in view of Lemma \ref{lemma:dtdmu},
we have
{\allowdisplaybreaks
\begin{align}
0=& \partial_t\delta_\mu V(t, x, \mu;y) \label{pdedelV}  \\
  &+  \partial_{x} \delta_\mu V (t, x, \mu;y)
f^*(t,x,  \mu) +    V_x (t, x, \mu)
\delta_\mu f^*(t,x,  \mu;y)
 \notag \\
&+ \frac{1}{2} {\rm Tr}
[\partial_{x}^2\delta_\mu V(t, x, \mu;y)(\Sigma+ \Sigma_0) ]     +\delta_\mu L^*(t,x,  \mu;y)  \notag \\
& +\langle  \mu(dz), \partial_z\delta_{\mu\mu} V(t,x, \mu; z,y) f^*(t, z,\mu)\rangle \notag  \\
&+\langle  \mu(dz), \D_z\delta_\mu V(t,x, \mu; z)\delta_\mu f^*(t, z,\mu;y)\rangle \notag  \\
&\hskip -0cm +\frac{1}{2}\langle  \mu(dz), {\rm Tr}[
\D_{z}^2 \delta_{\mu\mu}
V(t, x, \mu; z,y) (\Sigma+\Sigma_0)]\rangle \notag \\
&+ \langle \mu(dz), {\rm Tr} [ \D_{xz}\delta_{\mu\mu} V(t,x, \mu;z,y)  \Sigma_0 ]\rangle \notag  \\
&+\frac{1}{2}\langle\mu^{\otimes2}(dzdw), {\rm Tr}[\D_{zw}\delta_{\mu\mu\mu}
 V (t, x, \mu; z,w,y)
\Sigma_0]\rangle ,\notag\\
&+ \D_y\delta_\mu V(t,x, \mu; y) f^*(t, y,\mu) +\chi_1(t,x,\mu) \notag  \\
& + \frac{1}{2} {\rm Tr}[
\D_{y}^2 \delta_\mu
V(t, x, \mu; y) (\Sigma+\Sigma_0)]+\chi_2(t,x,\mu)\notag  \\
&+  {\rm Tr} [ \D_{xy}\delta_\mu V(t,x, \mu;y)  \Sigma_0 ]+\chi_3(t,x,\mu)  \notag  \\
& + \frac{1}{2}\langle\mu(dw), {\rm Tr}[\D_{yw}\delta_{\mu\mu} V (t, x, \mu; y,w)
\Sigma_0]\rangle+\chi_4(t,x,\mu)\notag  \\
& + \frac{1}{2}\langle\mu(dz), {\rm Tr}[\D_{zy}\delta_{\mu\mu} V (t, x, \mu; z,y)
\Sigma_0]\rangle+\chi_5(t,x,\mu). \notag
\end{align}
}
In the above, the measure differentiation can get inside integration by dominated convergence.

\subsection{The summed equation}

By switching variables in \eqref{pdedelV} and integration, we  obtain two more  equations for
$ \partial_t \langle\mu(dy), \delta_\mu V(t, y, \mu;x)\rangle$
and $  \partial_t \langle\mu(dy), \delta_\mu V(t, y, \mu;y)\rangle$, respectively.
We have
\begin{align*}
0=&\partial_t V(t, x, \mu)+\partial_t M(t, \mu)
  + \partial_t \langle\mu(dy), \delta_\mu V(t, y, \mu;x)\rangle - \partial_t \langle\mu(dy), \delta_\mu V(t, y, \mu;y)\rangle\\
&+{\Psi}_V,
\end{align*}
where
{\allowdisplaybreaks
\begin{align*}
\Psi_V\coloneqq &
   \  V_x (t, x, \mu)
f^*(t,x,  \mu) \notag \\
&+ \frac{1}{2} {\rm Tr}
[ V_{xx}(t, x, \mu)(\Sigma+ \Sigma_0 ) ] + L^*(t,x,  \mu)\notag \\
& +\langle  \mu(dz), \D_z\delta_\mu V(t,x, \mu; z) f^*(t, z,\mu)\rangle \notag  \\
& +\frac{1}{2}\langle  \mu(dz), {\rm Tr}[
\D_{z}^2 \delta_\mu
V(t, x, \mu; z) (\Sigma+\Sigma_0)]\rangle \notag \\
&+ \langle \mu(dz), {\rm Tr} [ \D_{xz}\delta_\mu V(t,x, \mu;z)  \Sigma_0 ]\rangle \notag  \\
&+\frac{1}{2}\langle\mu^{\otimes2}(dzdw), {\rm Tr}[\D_{zw}\delta_{\mu\mu} V (t, x, \mu; z,w)
\Sigma_0]\rangle \notag\\
& \\
&+\langle  \mu(dy) ,  \partial_y \delta_\mu  M(t, \mu; y) f^*(t, y, \mu)     \rangle \\
& +\frac12 \langle \mu(dy), \mbox{Tr}[ \partial_{y}^2\delta_\mu M (t,\mu;y) (\Sigma+\Sigma_0)]\rangle \nonumber \\
& +\frac12 \langle \mu^{\otimes2}(dydz), \mbox{Tr}[ \partial_{yz}\delta_{\mu\mu} M (t,\mu;y,z) \Sigma_0]\rangle \nonumber \\
& + \langle \mu^{\otimes 2}(dydz),  \partial_{y}\delta_\mu V (t,z,\mu;y) [ \delta_\mu f^*(t,y,\mu;z)- \delta_\mu f^*(t,y,\mu;y) ]\rangle \nonumber \\
&+ \frac12 \langle \mu^{\otimes2}(dydw), \mbox{Tr}\{  [\partial_{yz}\delta_{\mu\mu} V (t,w,\mu;y,z)]|_{z=y} \Sigma\}\rangle,  \nonumber\\
&\\
  &+\langle\mu(dy),  \D_{y} \delta_\mu V (t, y, \mu;x)
f^*(t,y,  \mu) +  \D_{y}  V (t, y, \mu)
\delta_\mu f^*(t,y,  \mu;x)\rangle
 \notag \\
&+ \frac{1}{2} \langle \mu(dy),  {\rm Tr}
[\D_{y}^2\delta_\mu V(t, y, \mu;x)(\Sigma+ \Sigma_0  ) ]\rangle \\
& + \langle \mu(dy), \delta_\mu L^*(t,y,  \mu;x) \rangle \notag \\
& +\langle \mu^{\otimes2}(dydz), \D_z\delta_{\mu\mu} V(t,y, \mu; z,x) f^*(t, z,\mu)\rangle \notag  \\
&+\langle  \mu^{\otimes2}(dydz), \D_z\delta_\mu V(t,y, \mu; z)\delta_\mu f^*(t, z,\mu;x)\rangle \notag  \\
&\hskip -0cm +\frac{1}{2}\langle  \mu^{\otimes2}(dydz), {\rm Tr}[
\D_{z}^2 \delta_{\mu\mu}
V(t, y, \mu; z,x) (\Sigma+\Sigma_0)]\rangle \notag \\
&+ \langle \mu^{\otimes2}(dydz), {\rm Tr} [ \D_{yz}\delta_{\mu\mu} V(t,y, \mu;z,x)  \Sigma_0]\rangle \notag  \\
&+\frac{1}{2}\langle\mu^{\otimes3}(dydzdw), {\rm Tr}[\D_{zw}\delta_{\mu\mu\mu} V (t, y, \mu; z,w,x)
\Sigma_0]\rangle ,\notag\\
&+\langle\mu(dy), \D_x\delta_\mu V(t,y, \mu; x) f^*(t, x,\mu)\rangle \\
& + \frac{1}{2}\langle\mu(dy), {\rm Tr}[
\D_{x}^2 \delta_\mu
V(t, y, \mu; x) (\Sigma+\Sigma_0)]\rangle \\
&+\langle\mu(dy),  {\rm Tr} [ \D_{yx}\delta_\mu V(t,y, \mu;x)  \Sigma_0 ]\rangle  \\
& + \frac{1}{2}\langle\mu^{\otimes 2}(dwdy), {\rm Tr}[\D_{xw}\delta_{\mu\mu} V (t, y, \mu; x,w)
\Sigma_0]\rangle\\
& + \frac{1}{2}\langle\mu^{\otimes 2}(dzdy), {\rm Tr}[\D_{zx}\delta_{\mu\mu} V (t, y, \mu; z,x)
\Sigma_0]\rangle\\
&\\
 &-\langle\mu(dy),  [\D_{y} \delta_\mu V (t, y, \mu;x)]|_{x=y}
f^*(t,y,  \mu) +    V_y (t, y, \mu)
\delta_\mu f^*(t,y,  \mu;y)\rangle
 \notag \\
&- \frac{1}{2} \langle \mu(dy),  {\rm Tr}
\{[\D_{y}^2\delta_\mu V(t, y, \mu;x)]|_{x=y}(\Sigma+ \Sigma_0  ) \}\rangle \\
& - \langle\mu(dy),\ \delta_\mu L^*(t,y,  \mu;y) \rangle \notag \\
& -\langle \mu^{\otimes2}(dydz), \D_z\delta_{\mu\mu} V(t,y, \mu; z,y) f^*(t, z,\mu)\rangle \notag  \\
&-\langle  \mu^{\otimes2}(dydz), \D_z\delta_\mu V(t,y, \mu; z)\delta_\mu f^*(t, z,\mu;y)\rangle \notag  \\
&-\frac{1}{2}\langle  \mu^{\otimes2}(dydz), {\rm Tr}[
\D_{z}^2 \delta_{\mu\mu}
V(t, y, \mu; z,y) (\Sigma+\Sigma_0)]\rangle \notag \\
&- \langle \mu^{\otimes2}(dydz), {\rm Tr}\{ [ \D_{yz}\delta_{\mu\mu} V(t,y, \mu;z,x)]|_{x=y}  \Sigma_0 \}\rangle \notag  \\
&-\frac{1}{2}\langle\mu^{\otimes3}(dydzdw), {\rm Tr}[\D_{zw}\delta_{\mu\mu\mu} V (t, y, \mu; z,w,y)
\Sigma_0]\rangle ,\notag\\
&-\langle\mu(dy), [\D_x\delta_\mu V(t,y, \mu; x)]|_{x=y} f^*(t, y,\mu)\rangle \\
& - \frac{1}{2}\langle\mu(dy), {\rm Tr}\{[
\D_{x}^2 \delta_\mu
V(t, y, \mu; x)]|_{x=y} (\Sigma+\Sigma_0)\}\rangle \\
&-\langle\mu(dy),  {\rm Tr}\{ [ \D_{yx}\delta_\mu V(t,y, \mu;x)]|_{x=y}  \Sigma_0 \}\rangle  \\
& - \frac{1}{2}\langle\mu^{\otimes2}(dwdy), {\rm Tr}\{[\D_{xw}\delta_{\mu\mu} V (t, y, \mu; x,w)]|_{x=y}
\Sigma_0\}\rangle\\
& - \frac{1}{2}\langle\mu^{\otimes2}(dzdy), {\rm Tr}\{[\D_{zx}\delta_{\mu\mu} V (t, y, \mu; z,x)]|_{x=y}
\Sigma_0\}\rangle.
\end{align*}}
We have split the above sum into several groups for ease of reading.

On the other hand,
using the expressions  \eqref{defUUb} and \eqref{defUU} for  $\overline U$  and $U$,
we evaluate the right hand side of \eqref{MEUa}, excluding $\partial_t U$,
to get the sum
$$
\Psi_U\coloneqq \sum_{k=1}^9 G_k,
$$
with the components:
{\allowdisplaybreaks
\begin{align*}
G_1 \coloneqq \quad   & V_x(t,x, \mu) f^*(t,x,\mu) + \langle \mu(dy),  \partial_x \delta_\mu V(t, y,\mu; x)      \rangle f^*(t,x,\mu), \\
&  \\
G_2 \coloneqq \quad  &\frac12 \mbox{Tr}\{[  V_{xx}(t,x, \mu)+ \langle \mu(dy),   \partial_x^2\delta_\mu V(t, y,\mu; x)      \rangle] (\Sigma+\Sigma_0)\}+L^*(t, x, \mu),\\
& \\
G_3 \coloneqq  \quad & \langle \mu(dy), [ \partial_y \delta_\mu V(t, x, \mu;y) + \partial_y\delta_\mu M(t, \mu;y)] f^*(t,y,\mu)\rangle\\
&+ \langle \mu^{\otimes2}(dwdy), [ \partial_y\delta_{\mu\mu}  V(t, w,\mu; x,y)-\partial_y \delta_{\mu\mu}  V(t, w,\mu; w,y) ] f^*(t,y,\mu)\rangle  \\
&+\langle \mu(dy), \partial_{y}[    \delta_\mu  V(t, y,\mu; x)- \delta_\mu  V(t, y,\mu; y)  ] f^*(t,y,\mu)\rangle, \\
& \\
G_4 \coloneqq  \quad &  \frac12\langle \mu(dy),\mbox{Tr} \{[ \partial_y^2 \delta_\mu V(t, x, \mu;y) + \partial_y^2\delta_\mu M(t, \mu;y)] (\Sigma+\Sigma_0)\}\rangle\\
&+ \frac12\langle \mu^{\otimes2}(dwdy),\mbox{Tr}\{ [ \partial_y^2\delta_{\mu\mu}  V(t, w,\mu; x,y)-\partial_y^2 \delta_{\mu\mu}  V(t, w,\mu; w,y) ] (\Sigma+\Sigma_0)\}
 \rangle  \\
&+\frac12\langle \mu(dy),\mbox{Tr}\{ \partial_{y}^2[    \delta_\mu  V(t, y,\mu; x)- \delta_\mu  V(t, y,\mu; y)  ] (\Sigma+\Sigma_0)\} \rangle,\\
& \\
G_5 \coloneqq \quad & \langle \mu(dy),\mbox{Tr} [ \partial_{xy} \delta_\mu V(t, x, \mu;y)  \Sigma_0]\rangle 
+ \langle \mu^{\otimes2}(dwdy),\mbox{Tr} [ \partial_{xy}\delta_{\mu\mu}  V(t, w,\mu; x,y) \Sigma_0] \rangle  \\
&+\langle \mu(dy),\mbox{Tr}[ \partial_{xy}    \delta_\mu  V(t, y,\mu; x)\Sigma_0] \rangle, \\
&\\
G_6 \coloneqq  \quad &  \frac12\Big\langle\mu^{\otimes2}(dydz), {\rm Tr}\Big\{  \partial_{yz}\Big[  \delta_{\mu\mu} V(t, x, \mu;y,z) + \delta_{\mu\mu} M(t, \mu;y,z)\\
&+\langle\mu(dw),   \delta_{\mu\mu\mu}  V(t, w,\mu; x,y,z)- \delta_{\mu\mu\mu}  V(t, w,\mu; w,y,z) \rangle  \\
&+  \delta_{\mu\mu}  V(t, z,\mu; x,y)- \delta_{\mu\mu}  V(t, z,\mu; z,y)    \\
&+    \delta_{\mu\mu}  V(t, y,\mu; x,z)- \delta_{\mu\mu}  V(t, y,\mu; y,z)  \Big]\Sigma_0 \Big\}\Big\rangle, \\
 &\\
G_7 \coloneqq \quad  &  \langle \mu(dy) , \   \delta_\mu L^* (t,y,\mu; x) - \delta_\mu L^*(t,y,\mu; y) \rangle ,\nonumber  \\
&\\
G_8 \coloneqq \quad &  \langle \mu(dy) , \  \partial_y[ V(t,y,\mu)  +\langle \mu(dw), \delta_\mu V(t, w, \mu;y) \rangle ] \\
&\hskip 1.5cm \cdot    [\delta_\mu f^* (t,y,\mu; x) - \delta_\mu f^*(t,y,\mu; y)] \rangle ,\nonumber \\
&\\
G_9\coloneqq\quad  &   \frac{1}{2}\Big\langle \mu(dy),    \mbox{Tr} \Big\{\Big[\partial_{yz}\delta_\mu V(t, y, \mu; z) +\partial_{yz}\delta_\mu V(t, z, \mu; y) \\
& \hskip 4.5cm+\langle \mu(dw),  \partial_{yz}\delta_{\mu\mu} V (t,w,\mu;y,z) \rangle\Big]\big|_{z=y}\Sigma\Big\}  \Big\rangle.
\end{align*}
}
We have followed Section  \ref{sec:sub:no} for the notation of partial derivatives. For instance, in the last line of $G_3$, $\partial_y$ acts on two places of $\delta_\mu V(t,y,\mu;y)$.

\subsection{Reduction to simpler equations}
It suffices to show
\begin{align}
\Psi_V=\Psi_U. \label{psivu}
\end{align}
Both sides of \eqref{psivu} share many common components.
We can cancel out $G_1$, $G_2$, $G_7$ and $G_8$ from both sides. We further cancel out $G_5$ from both sides after rewriting $G_5$ using \eqref{VSig} and the semi-symmetry property \eqref{ssp}.
Subsequently, we further cancel out several terms in
$G_3$, $G_4$, $G_6$ and $G_9$ that are shared by $\Psi_V$.
Now to show \eqref{psivu}, it suffices to show
\begin{align}
\Psi_V^1=\Psi_U^1, \label{pvu1}
\end{align}
where
{\allowdisplaybreaks

\begin{align*}
\Psi_V^1\coloneqq& \langle \mu^{\otimes2}(dydz), \D_z\delta_{\mu\mu} V(t,y, \mu; z,x) f^*(t, z,\mu)\rangle \notag  \\
&\hskip -0cm +\frac{1}{2}\langle  \mu^{\otimes2}(dydz), {\rm Tr}[
\D_{z}^2 \delta_{\mu\mu}
V(t, y, \mu; z,x) (\Sigma+\Sigma_0)]\rangle \notag \\
&+ \langle \mu^{\otimes2}(dydz), {\rm Tr} [ \D_{yz}\delta_{\mu\mu} V(t,y, \mu;z,x)  \Sigma_0]\rangle \notag  \\
&+\frac{1}{2}\langle\mu^{\otimes3}(dydzdw), {\rm Tr}[\D_{zw}\delta_{\mu\mu\mu} V (t, y, \mu; z,w,x)
\Sigma_0]\rangle \notag\\
&\\
 &-\langle\mu(dy),  [\D_{y} \delta_\mu V (t, y, \mu;x)]|_{x=y}
f^*(t,y,  \mu) \rangle
 \notag \\
&- \frac{1}{2} \langle \mu(dy),  {\rm Tr}
\{[\D_{y}^2\delta_\mu V(t, y, \mu;x)]|_{x=y}(\Sigma+ \Sigma_0  ) \}\rangle \\
& -\langle \mu^{\otimes2}(dydz), \D_z\delta_{\mu\mu} V(t,y, \mu; z,y) f^*(t, z,\mu)\rangle \notag  \\
&-\frac{1}{2}\langle  \mu^{\otimes2}(dydz), {\rm Tr}\{
\D_{z}^2 \delta_{\mu\mu}
V(t, y, \mu; z,y) (\Sigma+\Sigma_0)\}\rangle \notag \\
&- \langle \mu^{\otimes2}(dydz), {\rm Tr}\{ [ \D_{yz}\delta_{\mu\mu} V(t,y, \mu;z,x)]|_{x=y}  \Sigma_0 \}\rangle \notag  \\
&-\frac{1}{2}\langle\mu^{\otimes3}(dydzdw), {\rm Tr}[\D_{zw}\delta_{\mu\mu\mu} V (t, y, \mu; z,w,y)
\Sigma_0]\rangle ,\notag\\
&-\langle\mu(dy), [\D_x\delta_\mu V(t,y, \mu; x)]|_{x=y} f^*(t, y,\mu)\rangle \\
& - \frac{1}{2}\langle\mu(dy), {\rm Tr}\{[
\D_{x}^2 \delta_\mu
V(t, y, \mu; x)]|_{x=y} (\Sigma+\Sigma_0)\}\rangle \\
&-\langle\mu(dy),  {\rm Tr}\{ [ \D_{yx}\delta_\mu V(t,y, \mu;x)]|_{x=y}  \Sigma_0 \}\rangle  \\
& - \frac{1}{2}\langle\mu^{\otimes2}(dwdy), {\rm Tr}\{[\D_{xw}\delta_{\mu\mu} V (t, y, \mu; x,w)]|_{x=y}
\Sigma_0\}\rangle\\
& - \frac{1}{2}\langle\mu^{\otimes2}(dzdy), {\rm Tr}\{[\D_{zx}\delta_{\mu\mu} V (t, y, \mu; z,x)]|_{x=y}
\Sigma_0\}\rangle
\end{align*}
}
and
$$
\Psi_U^1\coloneqq G_3^1+G_4^1+G_6^1+G_9^1,
$$
with the components:
{\allowdisplaybreaks
\begin{align*}
G_3^1 \coloneqq  \quad & \langle \mu^{\otimes2}(dwdy), [ \partial_y\delta_{\mu\mu}  V(t, w,\mu; x,y)-\partial_y \delta_{\mu\mu}  V(t, w,\mu; w,y) ] f^*(t,y,\mu)\rangle  \\
&-\langle \mu(dy), \partial_{y}[   \delta_\mu  V(t, y,\mu; y)  ]
f^*(t,y,\mu)\rangle ,\\
& \\
G_4^1 \coloneqq  \quad
& \frac12\langle \mu^{\otimes2}(dwdy),\mbox{Tr}\{ [ \partial_y^2\delta_{\mu\mu}  V(t, w,\mu; x,y)-\partial_y^2 \delta_{\mu\mu}  V(t, w,\mu; w,y) ] (\Sigma+\Sigma_0)\}\rangle  \\
&-\frac12\langle \mu(dy),\mbox{Tr}\{ \partial_{y}^2[  \delta_\mu  V(t, y,\mu; y)  ] (\Sigma+\Sigma_0)\} \rangle,\\
& \\
G_6^1 \coloneqq  \quad &  \frac12\Big\langle\mu^{\otimes2}(dydz), {\rm Tr} \Big\{ \partial_{yz}\Big[
\langle\mu(dw),   \delta_{\mu\mu\mu}  V(t, w,\mu; x,y,z)- \delta_{\mu\mu\mu}  V(t, w,\mu; w,y,z) \rangle  \\
&+  \delta_{\mu\mu}  V(t, z,\mu; x,y)- \delta_{\mu\mu}  V(t, z,\mu; z,y)    \\
&+    \delta_{\mu\mu}  V(t, y,\mu; x,z)- \delta_{\mu\mu}  V(t, y,\mu; y,z)  \Big]\Sigma_0  \Big\}\Big\rangle, \\
&\\
G_9^1\coloneqq\quad  &   \frac{1}{2}\Big\langle \mu(dy),    \mbox{Tr} \Big\{\Big[\partial_{yz}\delta_\mu V(t, y, \mu; z) +\partial_{yz}\delta_\mu V(t, z, \mu; y) \Big]\Big|_{z=y}\Sigma\Big\} \Big \rangle.
\end{align*}
}

We use the relation
$$
\partial_y [\delta_\mu  V(t, y,\mu; y)]=\partial_y [\delta_\mu  V(t, y,\mu; x)]_{x=y}+\partial_x [\delta_\mu  V(t, y,\mu; x)]_{x=y}
$$
and next the semi-symmetry property \eqref{ssp} of $\delta_{\mu\mu}V$ to rewrite $G_3^1$.
This eliminates $G_3^1$ as a common part of both sides of \eqref{pvu1}.
Now we only need to show
\begin{align}\label{pp22}
\Psi_V^2=\Psi_U^2,
\end{align}
where
{\allowdisplaybreaks

\begin{align*}
\Psi_V^2\coloneqq
&\hskip -0cm \frac{1}{2}\langle  \mu^{\otimes2}(dydz), {\rm Tr}[
\D_{z}^2 \delta_{\mu\mu}
V(t, y, \mu; z,x) (\Sigma+\Sigma_0)]\rangle \notag \\
&+ \langle \mu^{\otimes2}(dydz), {\rm Tr} [ \D_{yz}\delta_{\mu\mu} V(t,y, \mu;z,x)  \Sigma_0]\rangle \notag  \\
&+\frac{1}{2}\langle\mu^{\otimes3}(dydzdw), {\rm Tr}[\D_{zw}\delta_{\mu\mu\mu} V (t, y, \mu; z,w,x)
\Sigma_0]\rangle \notag\\
&\\
&- \frac{1}{2} \langle \mu(dy),  {\rm Tr}
\{[\D_{y}^2\delta_\mu V(t, y, \mu;x)]|_{x=y}(\Sigma+ \Sigma_0  ) \}\rangle \\
&-\frac{1}{2}\langle  \mu^{\otimes2}(dydz), {\rm Tr}\{
\D_{z}^2 \delta_{\mu\mu}
V(t, y, \mu; z,y) (\Sigma+\Sigma_0)\}\rangle \notag \\
&- \langle \mu^{\otimes2}(dydz), {\rm Tr}\{ [ \D_{yz}\delta_{\mu\mu} V(t,y, \mu;z,x)]|_{x=y}  \Sigma_0 \}\rangle \notag  \\
&-\frac{1}{2}\langle\mu^{\otimes3}(dydzdw), {\rm Tr}[\D_{zw}\delta_{\mu\mu\mu} V (t, y, \mu; z,w,y)
\Sigma_0]\rangle \notag\\
& - \frac{1}{2}\langle\mu(dy), {\rm Tr}\{[
\D_{x}^2 \delta_\mu
V(t, y, \mu; x)]|_{x=y} (\Sigma+\Sigma_0)\}\rangle \\
&-\langle\mu(dy),  {\rm Tr}\{ [ \D_{yx}\delta_\mu V(t,y, \mu;x)]|_{x=y}  \Sigma_0 \}\rangle  \\
& - \frac{1}{2}\langle\mu^{\otimes2}(dwdy), {\rm Tr}\{[\D_{xw}\delta_{\mu\mu} V (t, y, \mu; x,w)]|_{x=y}
\Sigma_0\}\rangle\\
& - \frac{1}{2}\langle\mu^{\otimes2}(dzdy), {\rm Tr}\{[\D_{zx}\delta_{\mu\mu} V (t, y, \mu; z,x)]|_{x=y}
\Sigma_0\}\rangle
\end{align*}
}
and
$$
\Psi_U^2\coloneqq G_4^1+G_6^1+G_9^1
$$
with the components $G_4^1$, $G_6^1$ and $G_9^1$ taking the same form as in $\Psi_U^1$.

We rewrite $G_4^1$ using the semi-symmetry property \eqref{ssp} of $\delta_{\mu\mu} V$
 and next the relation
 $$
\partial_y^2 [\psi(y,y)]=[\partial_{xx} \psi(x,y) + \partial_{yy} \psi(x,y)
+ \partial_{xy} \psi(x,y)+\partial_{yx} \psi(x,y)]_{x=y}.
 $$
Subsequently, we cancel out $G_4^1+G_9^1$ as a common part of
$\Psi_V^2$ and $\Psi_U^2$.
Then to show \eqref{pp22}, it suffices to show
\begin{align}\label{pg31}
\Psi^3_V=G_6^1,
\end{align}
where $G_6^1$ is the same as in $\Psi_U^1$ and
{\allowdisplaybreaks
\begin{align*}
\Psi_V^3 \coloneqq
& \langle \mu^{\otimes2}(dydz), {\rm Tr} [ \D_{yz}\delta_{\mu\mu} V(t,y, \mu;z,x)  \Sigma_0]\rangle \notag  \\
&+\frac{1}{2}\langle\mu^{\otimes3}(dydzdw), {\rm Tr}[\D_{zw}\delta_{\mu\mu\mu} V (t, y, \mu; z,w,x)
\Sigma_0]\rangle \notag\\
&- \langle \mu^{\otimes2}(dydz), {\rm Tr}\{ [ \D_{yz}\delta_{\mu\mu} V(t,y, \mu;z,x)]|_{x=y}  \Sigma_0 \}\rangle \notag  \\
&-\frac{1}{2}\langle\mu^{\otimes3}(dydzdw), {\rm Tr}[\D_{zw}\delta_{\mu\mu\mu} V (t, y, \mu; z,w,y)
\Sigma_0]\rangle \notag\\
& - \frac{1}{2}\langle\mu^{\otimes2}(dwdy), {\rm Tr}\{[\D_{xw}\delta_{\mu\mu} V (t, y, \mu; x,w)]|_{x=y}
\Sigma_0\}\rangle\\
& - \frac{1}{2}\langle\mu^{\otimes2}(dzdy), {\rm Tr}\{[\D_{zx}\delta_{\mu\mu} V (t, y, \mu; z,x)]|_{x=y}
\Sigma_0\}\rangle.
\end{align*}
}

Using the semi-symmetry property \eqref{ssp} twice, we obtain
\begin{align*}
\delta_{\mu\mu\mu} V(t, w, \mu; x,y,z) =& \delta_{\mu\mu\mu} V(t, w, \mu; y,z,x)+ \delta_{\mu\mu} V(t, w, \mu; y,x)\\
&- 2 \delta_{\mu\mu} V(t, w, \mu; y,z)+\delta_{\mu\mu} V(t, w, \mu; x,z),
\end{align*}
and subsequently,
\begin{align*}
&\partial_{yz}[\delta_{\mu\mu\mu} V(t, w, \mu; x,y,z)-\delta_{\mu\mu\mu} V(t, w, \mu; w,y,z)]\\
&=\partial_{yz}[\delta_{\mu\mu\mu} V(t, w, \mu; y,z,x)-\delta_{\mu\mu\mu} V(t, w, \mu; y,z,w)].
\end{align*}
Now to show \eqref{pg31}, we only need to show
$$
\Psi^4_V= G_6^2,
$$
where
{\allowdisplaybreaks

\begin{align*}
\Psi_V^4 \coloneqq
& \langle \mu^{\otimes2}(dydz), {\rm Tr} [ \D_{yz}\delta_{\mu\mu} V(t,y, \mu;z,x)  \Sigma_0]\rangle \notag  \\
&- \langle \mu^{\otimes2}(dydz), {\rm Tr}\{ [ \D_{yz}\delta_{\mu\mu} V(t,y, \mu;z,x)]|_{x=y}  \Sigma_0 \}\rangle \notag  \\
& - \frac{1}{2}\langle\mu^{\otimes2}(dwdy), {\rm Tr}\{[\D_{xw}\delta_{\mu\mu} V (t, y, \mu; x,w)]|_{x=y}
\Sigma_0\}\rangle\\
& - \frac{1}{2}\langle\mu^{\otimes2}(dzdy), {\rm Tr}\{[\D_{zx}\delta_{\mu\mu} V (t, y, \mu; z,x)]|_{x=y}
\Sigma_0\}\rangle
\end{align*}
}
and
\begin{align*}
G_6^2 \coloneqq  \quad &  \frac12\Big\langle\mu^{\otimes2}(dydz), {\rm Tr}\Big\{  \partial_{yz}\Big[
  \delta_{\mu\mu}  V(t, z,\mu; x,y)- \delta_{\mu\mu}  V(t, z,\mu; z,y)    \\
&+    \delta_{\mu\mu}  V(t, y,\mu; x,z)- \delta_{\mu\mu}  V(t, y,\mu; y,z)  \Big]\Sigma_0\Big\}\Big\rangle.
\end{align*}
After transforming the $x$-dependent terms of $G_6^2$ using the semi-symmetry property \eqref{ssp} and comparing with $\Psi_V^4$ to get rid of shared terms,
 to show \eqref{pg31}, we only need to show
\begin{align}
& \langle \mu^{\otimes2}(dydz), {\rm Tr}\{ [ \D_{yz}\delta_{\mu\mu} V(t,y, \mu;z,x)]|_{x=y}  \Sigma_0 \}\rangle \label{muvc6}  \\
& + \frac{1}{2}\langle\mu^{\otimes2}(dwdy), {\rm Tr}\{[\D_{xw}\delta_{\mu\mu} V (t, y, \mu; x,w)]|_{x=y}
\Sigma_0\}\rangle \notag \\
&  +\frac{1}{2}\langle\mu^{\otimes2}(dzdy), {\rm Tr}\{[\D_{zx}\delta_{\mu\mu} V (t, y, \mu; z,x)]|_{x=y}
\Sigma_0\}\rangle \notag  \\
=&\frac12\big\langle\mu^{\otimes2}(dydz), {\rm Tr}\{  \partial_{yz}[
  \delta_{\mu\mu}  V(t, z,\mu; z,y) + \delta_{\mu\mu}  V(t, y,\mu; y,z)  ]\Sigma_0\}\big\rangle \notag \\
&+ \langle \mu^{\otimes2}(dydz), \  {\rm Tr}[  \partial_{yz}
  \delta_{\mu}  V(t, y,\mu;z)\Sigma_0 ]  \rangle. \notag
\end{align}
 After changing the notation on the  LHS and expanding the partial differentiation on the RHS, \eqref{muvc6} is equivalently written as
\begin{align*}
& \langle \mu^{\otimes2}(dydz), {\rm Tr}\{ [ \D_{yz}\delta_{\mu\mu} V(t,y, \mu;z,x)]|_{x=y}  \Sigma_0 \}\rangle \notag  \\
& + \frac{1}{2}\langle\mu^{\otimes2}(dydz), {\rm Tr}\{[\D_{yz}\delta_{\mu\mu} V (t, w, \mu; y,z)]|_{w=y}
\Sigma_0\}\rangle\\
&  +\frac{1}{2}\langle\mu^{\otimes2}(dydz), {\rm Tr}\{[\D_{zy}\delta_{\mu\mu} V (t, w, \mu; z,y)]|_{w=y}
\Sigma_0\}\rangle\\
=&\frac12\Big\langle\mu^{\otimes2}(dydz), {\rm Tr}\Big\{ \Big( [\partial_{yz}
  \delta_{\mu\mu}  V(t, x,\mu; z,y)]|_{x=z} +[\partial_{yz} \delta_{\mu\mu}  V(t, z,\mu; x,y)]|_{x=z}\\
  &\qquad\qquad +[\partial_{yz} \delta_{\mu\mu}  V(t, x,\mu; y,z)]|_{x=y} +[\partial_{yz}\delta_{\mu\mu}  V(t, y,\mu; x,z)]|_{x=y}  \Big)\Sigma_0\Big\}\Big\rangle \\
&+ \langle \mu^{\otimes2}(dydz), \  {\rm Tr} [ \partial_{yz}
  \delta_{\mu}  V(t, y,\mu;z)\Sigma_0 ]  \rangle,
\end{align*}
which, after cancellation of the last two terms of LHS and next rewriting the first term of LHS  by the semi-symmetry property \eqref{ssp}, reduces to
\begin{align*}
& \langle \mu^{\otimes2}(dydz), {\rm Tr}\{ \D_{yz}[\delta_{\mu\mu} V(t,y, \mu;x,z)  +\delta_{\mu} V(t,y,\mu;z) - \delta_{\mu} V(t,y,\mu;x)  ]  \Sigma_0 \}|_{x=y}\rangle \notag  \\
=&\frac12\Big\langle\mu^{\otimes2}(dydz), {\rm Tr} \Big\{\Big( [\partial_{yz} \delta_{\mu\mu}  V(t, z,\mu; x,y)]|_{x=z}  +[\partial_{yz}\delta_{\mu\mu}  V(t, y,\mu; x,z)]|_{x=y}  \Big)\Sigma_0\Big\}\Big\rangle \\
&+ \langle \mu^{\otimes2}(dydz), \  {\rm Tr[}  \partial_{yz}
  \delta_{\mu}  V(t, y,\mu;z)\Sigma_0   ]\rangle.
\end{align*}
The above equality is equivalent to
\begin{align*}
& \langle \mu^{\otimes2}(dydz), {\rm Tr}[ \D_{yz}\delta_{\mu\mu} V(t,y, \mu;x,z)     \Sigma_0 ]|_{x=y}\rangle \notag  \\
 &=\frac12\big\langle\mu^{\otimes2}(dydz), {\rm Tr}\big\{ \big( [\partial_{yz} \delta_{\mu\mu}  V(t, z,\mu; x,y)]|_{x=z}  +[\partial_{yz}\delta_{\mu\mu}  V(t, y,\mu; x,z)]|_{x=y}  \big)\Sigma_0\big\}\big\rangle .
\end{align*}
The last equality indeed holds in view of Lemma \ref{lemma:psixy} and equality \eqref{Dtr}.
We conclude that \eqref{psivu} holds, which completes the proof.

\section*{Appendix D: Derivation of  Equations \eqref{MEUa} and \eqref{MEUb} for $U$ and $\overline U$}

\renewcommand{\theequation}{D.\arabic{equation}}
\setcounter{equation}{0}
\renewcommand{\thetheorem}{D.\arabic{theorem}}
\setcounter{theorem}{0}
\renewcommand{\thesubsection}{D.\arabic{subsection}}
\setcounter{subsection}{0}

Let $U_{\rm soc}^N(t,x^1, \mu^{-1})$
be the social cost in \eqref{JNphi} and \eqref{U1nrep}.
In the formal derivation below, we suppose the functions behave well so that the small error terms $o(\epsilon)$ holds uniformly with respect to all $N$.
We have
\begin{align}
U_{\rm soc}^N(t,x^1, \mu^{-1})& =\EE\Big\{
\int_t^{t+\epsilon}\Big[ L^*(s, X_s^1,
\mu^{-1}_s)+\sum_{k=2}^N L^*(s, X_s^k,
\mu^{-k}_s)\Big ]ds \label{Udeco} \\
& \quad +U^N(t+\epsilon, X_{t+\epsilon}^1, \mu^{-1}_{t+\epsilon}) + (N-1) \overline U(t+\epsilon, \mu^{-1}_{t+\epsilon})\Big\} \notag  \\
&= L^*(t,x^1, \mu^{-1}) \epsilon + \sum_{k=2}^N L^*(t,x^k, \mu^{-k}) \epsilon
+No(\epsilon)\notag  \\
&  \quad +\EE U^N(t+\epsilon, X_{t+\epsilon}^1, \mu^{-1}_{t+\epsilon}) + (N-1)\EE \overline U(t+\epsilon, \mu^{-1}_{t+\epsilon}).\notag
\end{align}

We make the expansion
\begin{align*}
\sum_{k=2}^N L^*(s, x^k,
\mu^{-k}) =&
\sum_{k=2}^N  L^*(s, x^k,
\mu^{-1})\\
& +\sum_{k=2}^N [\langle ( \mu^{-k}-\mu^{-1})(dy) , \delta_\mu L^*( t, x^k, \mu^{-1};y) \rangle + o(1/N)]\\
=&  (N-1)\langle \mu^{-1} (dy) , L^*(t,y, \mu^{-1})  \rangle\\
& +\sum_{k=2}^N \langle (\mu^{-k}-\mu^{-1})(dy) ,\ \delta_\mu L^*( t, x^k, \mu^{-1};y) \rangle \quad(\eqqcolon\xi_1)  \\
&+o(1),
\end{align*}
where $o(1)\to 0$ as $N\to \infty$.
We have
\begin{align*}
\xi_1&= \frac{1}{N-1}\sum_{k=2}^N [\delta_\mu L^*( t, x^k, \mu^{-1};x^1)  -  \delta_\mu L^*( t, x^k, \mu^{-1};x^k)   ]\\
 &=\langle\mu^{-1}(dy), \    \delta_\mu L^*( t, y, \mu^{-1};x^1)  -  \delta_\mu L^*( t, y, \mu^{-1};y)    \rangle .
\end{align*}

In the following we have such designated variables $x,y,z$ as in the functions
$U^N(t,x,\mu)$, $\partial_\mu U^N(t, x, \mu; y) $,
$\partial_{\mu\mu} U^N(t, x, \mu; y,z)$, $\delta_\mu\overline U(t,\mu; y)$ and
$ \delta_{\mu\mu} \overline U(t, \mu; y,z) $.
Denote $\Delta \mu^{-k}\coloneqq \mu_{t+\epsilon}^{-k}- \mu^{-k} $ for all $1\le k\le N $.
Now we  formally make the expansion
\begin{align*}
&\hskip -0.5cm  U^N(t+\epsilon, X_{t+\epsilon}^1, \mu^{-1}_{t+\epsilon})\\
=\ &
U^N(t, x^1, \mu^{-1}) + \partial_t U^N(t, x^1, \mu^{-1}) \epsilon \\
& + \partial_x U^N(t,x^1, \mu^{-1})f^*(t, x^1, \mu^{-1}) \epsilon  + \frac12 \mbox{Tr}[ \partial_x^2 U^N(t, x^1, \mu^{-1}) (\Sigma+\Sigma_0)] \epsilon \\
&+ \langle \Delta\mu^{-1} (dy), \delta_\mu U^N(t, x^1, \mu^{-1}; y)     \rangle \quad ( \eqqcolon \xi_2) \\
& +\langle \Delta\mu^{-1} (dy),  \partial_x\delta_\mu  U^N(t, x^1, \mu^{-1}; y)\\
 & \hskip 3.2cm  \cdot[\sigma_0 (W_{t+\epsilon }^0-W_t^0)+\sigma(W^1_{t+\epsilon}-W^1_t)]    \rangle
 \quad (\eqqcolon \xi_3)    \\
&+\frac12 \langle \Delta\mu^{-1} (dy)\Delta\mu^{-1} (dz),  \delta_{\mu\mu} U^N(t, x^1, \mu^{-1}; y,z)
\rangle \quad (\eqqcolon \xi_4) \\
&+o(\epsilon).
\end{align*}
We similarly have
\begin{align*}
 \overline U (t+\epsilon, \mu^{-1}_{t+\epsilon})& =
 \overline U(t, \mu^{-1}) +\partial_t  \overline U (t, \mu^{-1)})\epsilon  \\
 & +
\langle \Delta\mu^{-1}(dy), \delta_\mu
\overline U (t, \mu^{-1};y)  \rangle \quad (\eqqcolon\xi_5)  \\
&+\frac12
\langle \Delta\mu^{-1} (dy)\Delta\mu^{-1} (dz) , \delta_{\mu\mu} \overline U (t,\mu^{-1};y,z ) \rangle \quad (\eqqcolon\xi_6) \\
& +o(\epsilon).
\end{align*}

We have
\begin{align*}
\EE\xi_2 &=\frac{1}{N-1}\EE\sum_{k=2}^N [\delta_\mu U^N(t, x^1, \mu^{-1}; X^k_{t+\epsilon}) -\delta_\mu U^N(t, x^1, \mu^{-1}; x^k) ]\\
&=\frac{1}{N-1}\sum_{k=2}^N \Big\{\partial_y \delta_\mu U^N(t, x^1, \mu^{-1}; x^k)
f^*(t,x^k, \mu^{-k}) \epsilon\\
&\qquad\qquad\qquad + \frac12 \mbox{Tr} [\partial_y^2 \delta_\mu U^N(t, x^1, \mu^{-1}; x^k) (\Sigma+\Sigma_0)]\epsilon +o(\epsilon)\Big\}\\
&= \epsilon  \langle  \mu^{-1}(dy), \  \partial_y \delta_\mu U^N(t, x^1, \mu^{-1}; y)
f^*(t,y, \mu^{-1})    \rangle\\
&  \quad  +\epsilon  \langle\mu^{-1}(dy) ,\   \frac12 \mbox{Tr} [\partial_y^2 \delta_\mu U^N(t, x^1, \mu^{-1}; y) (\Sigma+\Sigma_0)]  \rangle +o(\epsilon)  ,
\end{align*}
and
\begin{align*}
\EE\xi_3=& \frac{1}{N-1} \sum_{k=2}^N\Big\{[ \partial_x\delta_\mu  U^N(t, x^1, \mu^{-1}; X^k_{t+\epsilon}) - \partial_x\delta_\mu  U^N(t, x^1, \mu^{-1}; x^k) ] \\
 & \hskip 3.2cm  \cdot[\sigma_0 (W_{t+\epsilon }^0-W_t^0)+\sigma(W^1_{t+\epsilon}-W^1_t)]\Big\}      \\
& =\langle \mu^{-1}(dy),\ \mbox{Tr} [\partial_{xy}\delta_\mu U^N(t,x^1, \mu; y)\Sigma_0 ]\rangle \epsilon +o(\epsilon).
\end{align*}

Next we have
\begin{align*}
\xi_4=&  \frac{1}{2(N-1)^2} \sum_{j,k\ge 2}\Big[ \delta_{\mu\mu}U^N(t, x^1, \mu^{-1}, X_{t+\epsilon}^j, X_{t+\epsilon}^k)- \delta_{\mu\mu}U^N(t, x^1, \mu^{-1}, X_{t}^j, X_{t+\epsilon}^k)\\
&\qquad\qquad\qquad\quad - \delta_{\mu\mu}U^N(t, x^1, \mu^{-1}, X_{t+\epsilon}^j, X_{t}^k)+\delta_{\mu\mu}U^N(t, x^1, \mu^{-1}, X_{t}^j, X_{t}^k)\Big],
\end{align*}
and
\begin{align*}
\EE \xi_4&= \frac{1}{2(N-1)^2}\EE\sum_{j,k\ge 2}\Big\{ \mbox{Tr}\Big[\partial_{yz} \delta_{\mu\mu} U^N(t, x^1, \mu^{-1};x^j,x^k)\\
&\hskip 3cm\cdot (X_{t+\epsilon}^k-X_t^k)(X_{t+\epsilon}^j-X_t^j)^T\Big] +o(\epsilon)\Big\}  \\
&= \frac{1}{2(N-1)^2}\EE\sum_{j,k\ge 2}\Big\{ \mbox{Tr}[\partial_{yz} \delta_{\mu\mu} U^N(t, x^1, \mu^{-1};x^j,x^k) \Sigma_0]\epsilon +o(\epsilon)\Big\} \\
&\quad + \frac{1}{2(N-1)^2}\EE\sum_{k\ge 2}\Big\{ \mbox{Tr}[\partial_{yz} \delta_{\mu\mu} U^N(t, x^1, \mu^{-1};x^k,x^k) \Sigma] \epsilon +o(\epsilon)\Big\}\\
& = \frac12\langle\mu^{-1}(dy) \mu^{-1}(dz), \    \mbox{Tr[}\partial_{yz} \delta_{\mu\mu} U^N(t, x^1, \mu^{-1};y,z) \Sigma_0  ]\rangle\epsilon \\
&\qquad  +  \frac{1}{2(N-1)}\langle\mu^{-1}(dy) , \    \mbox{Tr}\{[\partial_{yz} \delta_{\mu\mu} U^N(t, x^1, \mu^{-1};y,z)]|_{z=y} \Sigma \} \rangle\epsilon
+o(\epsilon).
\end{align*}

We have {\allowdisplaybreaks
\begin{align*}
\EE\xi_5 & = \frac{1}{N-1}\EE\sum_{k=2}^N
[\delta_\mu \overline U(t, \mu^{-1}; X^k_{t+\epsilon})-\delta_\mu \overline U(t, \mu^{-1}; x^k)] \\
&= \frac{1}{N-1}\sum_{k=2}^N
\Big\{\partial_y\delta_\mu \overline U(t, \mu^{-1}; x^k) f^*(t, x^k, \mu^{-k})\\ & \qquad \qquad \qquad +\frac12 \mbox{Tr} [\partial_{y}^2\delta_\mu \overline U(t, \mu^{-1}; x^k)(\Sigma+\Sigma_0) ]\Big\}\epsilon +o(\epsilon)\\
&= \frac{1}{N-1}\sum_{k=2}^N
\Big\{\partial_y\delta_\mu \overline U(t, \mu^{-1}; x^k) f^*(t, x^k, \mu^{-1}) \\
&\qquad \qquad \qquad   +\frac12 \mbox{Tr} [\partial_{y}^2\delta_\mu \overline U(t, \mu^{-1}; x^k)(\Sigma+\Sigma_0) ]\Big\}\epsilon +o(\epsilon)\\
&\quad +  \frac{1}{N-1}\sum_{k=2}^N \partial_y\delta_\mu \overline U(t, \mu^{-1}; x^k)\\
&\hskip 3cm\cdot [\langle (\mu^{-k}-\mu^{-1})(dy),\  \delta_\mu f^*(t, x^k, \mu^{-1}; y) \rangle + o(1/N)]\epsilon \\
&=\frac{1}{N-1}\sum_{k=2}^N
\Big\{\partial_y\delta_\mu \overline U(t, \mu^{-1}; x^k) f^*(t, x^k, \mu^{-1}) \\
&\qquad \qquad \qquad    +\frac12 \mbox{Tr}[\partial_{y}^2\delta_\mu \overline U(t, \mu^{-1}; x^k)(\Sigma+\Sigma_0) ]\Big\}\epsilon \\
&\quad +  \frac{1}{(N-1)^2}\sum_{k=2}^N\Big\{ \partial_y\delta_\mu \overline U(t, \mu^{-1}; x^k)\\
& \hskip 3cm \cdot  [  \delta_\mu f^*(t, x^k, \mu^{-1}; x^1)- \delta_\mu f^*(t, x^k, \mu^{-1}; x^k) ]\epsilon \Big\}\\
&\quad   + o(1/N)\epsilon+o(\epsilon).
\end{align*}
}
So we have
\begin{align*}
(N-1) \EE \xi_5= & \epsilon (N-1)
\Big\langle \mu^{-1}(dy), \ \partial_y\delta_\mu \overline U(t, \mu^{-1}; y) f^*(t, y, \mu^{-1})\\
&\qquad\qquad \qquad\qquad +\frac12 \mbox{Tr}[\partial_{y}^2\delta_\mu \overline U(t, \mu^{-1}; y)(\Sigma+\Sigma_0) ] \Big \rangle \\
&+\epsilon \langle \mu^{-1}(dy),\   \partial_y\delta_\mu \overline U(t, \mu^{-1}; y)[  \delta_\mu f^*(t, y, \mu^{-1}; x^1)- \delta_\mu f^*(t, y, \mu^{-1}; y) ]  \rangle \\
&  + o(1) \epsilon+  N o(\epsilon),
\end{align*}
where $o(1)\to 0$ as $N\to \infty$.

Similarly, we have
\begin{align*}
\EE \xi_6&=\frac{1}{2(N-1)^2}\EE\sum_{j,k\ge 2}\Big\{ \mbox{Tr}\Big[\partial_{yz} \delta_{\mu\mu} \overline U(t,  \mu^{-1};x^j,x^k)\\
& \hskip 3.5cm \cdot (X_{t+\epsilon}^k-X_t^k)(X_{t+\epsilon}^j-X_t^j)^T\Big] +o(\epsilon)\Big\}  \\
&= \frac{1}{2(N-1)^2}\EE\sum_{j,k\ge 2}\Big\{ \mbox{Tr[}\partial_{yz} \delta_{\mu\mu} \overline U(t,  \mu^{-1};x^j,x^k) \Sigma_0]\epsilon +o(\epsilon)\Big\} \\
&\quad + \frac{1}{2(N-1)^2}\EE\sum_{k\ge 2}\Big\{ \mbox{Tr[}\partial_{yz} \delta_{\mu\mu} \overline U(t,  \mu^{-1};x^k,x^k) \Sigma] \epsilon +o(\epsilon)\Big\}\\
& = \frac12\langle\mu^{-1}(dy) \mu^{-1}(dz), \    \mbox{Tr}[\partial_{yz} \delta_{\mu\mu}\overline U(t, \mu^{-1};y,z) \Sigma_0  ]\rangle\epsilon \\
&\quad +  \frac{1}{2(N-1)}\langle\mu^{-1}(dy) , \    \mbox{Tr}\{[\partial_{yz} \delta_{\mu\mu} \overline U(t, \mu^{-1};y,z)]|_{z=y} \Sigma\}  \rangle\epsilon
+o(\epsilon).
\end{align*}
Therefore,  we have
\begin{align*}
(N-1) \EE \xi_6 &=   \frac12 (N-1)\langle\mu^{-1}(dy) \mu^{-1}(dz), \    \mbox{Tr}[\partial_{yz} \delta_{\mu\mu}\overline U(t, \mu^{-1};y,z) \Sigma_0]  \rangle\epsilon \\
& +  \frac12\langle\mu^{-1}(dy) , \    \mbox{Tr}\{[\partial_{yz} \delta_{\mu\mu} \overline U(t, \mu^{-1};y,z)]|_{z=y} \Sigma \} \rangle\epsilon
+No(\epsilon).
\end{align*}

Using the local expansion of the right hand side of \eqref{Udeco}
and next letting $\epsilon \to 0$,
we obtain the following  equation
{\allowdisplaybreaks
\begin{align*}
0=&  L^*(t,x^1, \mu^{-1}) + (N-1)\langle \mu^{-1} (dy) , L^*(t,y, \mu^{-1})  \rangle\\
 &+\langle\mu^{-1}(dy), \    \delta_\mu L^*( t, y, \mu^{-1};x^1)  -  \delta_\mu L^*( t, y, \mu^{-1};y)    \rangle \\
&+ \partial_t U^N(t, x^1, \mu^{-1})   + \partial_x U^N(t,x^1, \mu^{-1})f^*(t, x^1, \mu^{-1})  \\
& + \frac12 \mbox{Tr}[ \partial_x^2 U^N(t, x^1, \mu^{-1}) (\Sigma+\Sigma_0)]  \\
&+  \langle  \mu^{-1}(dy), \  \partial_y \delta_\mu U^N(t, x^1, \mu^{-1}; y)
f^*(t,y, \mu^{-1})    \rangle\\
&  +\frac12  \langle\mu^{-1}(dy) ,\    \mbox{Tr} [\partial_y^2 \delta_\mu U^N(t, x^1, \mu^{-1}; y) (\Sigma+\Sigma_0)]  \rangle\\
&+\langle \mu^{-1}(dy),\ \mbox{Tr} [\partial_{xy}\delta_\mu U^N(t,x^1, \mu; y)\Sigma_0 ]\rangle \\
& + \frac12\langle\mu^{-1}(dy) \mu^{-1}(dz), \    \mbox{Tr[}\partial_{yz} \delta_{\mu\mu} U^N(t, x^1, \mu^{-1};y,z) \Sigma_0  ]\rangle \\
&  +  \frac{1}{2(N-1)}\langle\mu^{-1}(dy) , \    \mbox{Tr}\{[\partial_{yz} \delta_{\mu\mu} U^N(t, x^1, \mu^{-1};y,z)]|_{z=y} \Sigma \} \rangle
\\
&+ (N-1)\partial_t \overline U (t, \mu^{-1})  \\
&+(N-1)
\langle \mu^{-1}(dy), \ \partial_y\delta_\mu \overline U(t, \mu^{-1}; y) f^*(t, y, \mu^{-1})\rangle \\
& +\frac12(N-1)\langle\mu^{-1}(dy),\   \mbox{Tr}[\partial_{y}^2\delta_\mu \overline U(t, \mu^{-1}; y)(\Sigma+\Sigma_0) ]  \rangle \\
&+ \langle \mu^{-1}(dy),\   \partial_y\delta_\mu \overline U(t, \mu^{-1}; y)[  \delta_\mu f^*(t, y, \mu^{-1}; x^1)- \delta_\mu f^*(t, y, \mu^{-1}; y) ]  \rangle \\
&+ \frac12 (N-1)\langle\mu^{-1}(dy) \mu^{-1}(dz), \    \mbox{Tr}[\partial_{yz} \delta_{\mu\mu}\overline U(t, \mu^{-1};y,z) \Sigma_0  ]\rangle \\
& +  \frac12\langle\mu^{-1}(dy) , \    \mbox{Tr}\{[\partial_{yz} \delta_{\mu\mu} \overline U(t, \mu^{-1};y,z)]|_{z=y} \Sigma \} \rangle
+o(1) ,
\end{align*}
}
where $o(1)\to 0$ as $N\to\infty$.

Collecting all components with coefficient $N-1$ together, we rewrite the above equation in the form
{\allowdisplaybreaks
\begin{align*}
0= &  (N-1)\Big\{ \partial_t \overline U (t, \mu^{-1)})  + \langle \mu^{-1} (dy) , L^*(t,y, \mu^{-1})  \rangle  \\
&+
\langle \mu^{-1}(dy), \ \partial_y\delta_\mu \overline U(t, \mu^{-1}; y) f^*(t, y, \mu^{-1})\rangle \\
& +\frac12\langle\mu^{-1}(dy),\   \mbox{Tr}[\partial_{y}^2\delta_\mu \overline U(t, \mu^{-1}; y)(\Sigma+\Sigma_0) ]  \rangle \\
&+ \frac12 \langle\mu^{-1}(dy) \mu^{-1}(dz), \    \mbox{Tr[}\partial_{yz} \delta_{\mu\mu}\overline U(t, \mu^{-1};y,z) \Sigma_0  ]\rangle
\Big\} \\
&+ \partial_t U^N(t, x^1, \mu^{-1})  + \partial_x U^N(t,x^1, \mu^{-1})f^*(t, x^1, \mu^{-1})  \\
& + \frac12 \mbox{Tr}[ \partial_x^2 U^N(t, x^1, \mu^{-1}) (\Sigma+\Sigma_0)] + L^*(t,x^1, \mu^{-1}) \\
&+  \langle  \mu^{-1}(dy), \  \partial_y \delta_\mu U^N(t, x^1, \mu^{-1}; y)
f^*(t,y, \mu^{-1})    \rangle\\
&  +\frac12  \langle\mu^{-1}(dy) ,\    \mbox{Tr} [\partial_y^2 \delta_\mu U^N(t, x^1, \mu^{-1}; y) (\Sigma+\Sigma_0)]  \rangle\\
&+\langle \mu^{-1}(dy),\ \mbox{Tr} [\partial_{xy}\delta_\mu U^N(t,x^1, \mu; y)\Sigma_0 ]\rangle \\
& + \frac12\langle\mu^{-1}(dy) \mu^{-1}(dz), \    \mbox{Tr[}\partial_{yz} \delta_{\mu\mu} U^N(t, x^1, \mu^{-1};y,z) \Sigma_0  ]\rangle \\
&  +  \frac{1}{2(N-1)}\langle\mu^{-1}(dy) , \    \mbox{Tr}\{[\partial_{yz} \delta_{\mu\mu} U^N(t, x^1, \mu^{-1};y,z)]|_{z=y} \Sigma \} \rangle\\
 &+\langle\mu^{-1}(dy), \    \delta_\mu L^*( t, y, \mu^{-1};x^1)  -  \delta_\mu L^*( t, y, \mu^{-1};y)    \rangle \\
&+ \langle \mu^{-1}(dy),\   \partial_y\delta_\mu \overline U(t, \mu^{-1}; y)[  \delta_\mu f^*(t, y, \mu^{-1}; x^1)- \delta_\mu f^*(t, y, \mu^{-1}; y) ]  \rangle \\
& +  \frac12\langle\mu^{-1}(dy) , \    \mbox{Tr}\{[\partial_{yz} \delta_{\mu\mu} \overline U(t, \mu^{-1};y,z)]|_{z=y} \Sigma \} \rangle +o(1) .
\end{align*}
}

Provided that we have chosen the function $\overline U(t,\mu)$ to satisfy \eqref{MEUb},
then the above equation further implies
{\allowdisplaybreaks
\begin{align}
 0=&\partial_t U^N(t, x^1, \mu^{-1})  + \partial_x U^N(t,x^1, \mu^{-1})f^*(t, x^1, \mu^{-1}) \label{MEUN}  \\
& + \frac12 \mbox{Tr}[ \partial_x^2 U^N(t, x^1, \mu^{-1}) (\Sigma+\Sigma_0)] + L^*(t,x^1, \mu^{-1}) \notag \\
&+  \langle  \mu^{-1}(dy), \  \partial_y \delta_\mu U^N(t, x^1, \mu^{-1}; y)
f^*(t,y, \mu^{-1})    \rangle \notag  \\
&  +\frac12  \langle\mu^{-1}(dy) ,\    \mbox{Tr} [\partial_y^2 \delta_\mu U^N(t, x^1, \mu^{-1}; y) (\Sigma+\Sigma_0)]  \rangle \notag \\
&+\langle \mu^{-1}(dy),\ \mbox{Tr} [\partial_{xy}\delta_\mu U^N(t,x^1, \mu; y)\Sigma_0 ]\rangle \notag \\
& + \frac12\langle\mu^{-1}(dy) \mu^{-1}(dz), \    \mbox{Tr[}\partial_{yz} \delta_{\mu\mu} U^N(t, x^1, \mu^{-1};y,z) \Sigma_0  ]\rangle \notag  \\
&  +  \frac{1}{2(N-1)}\langle\mu^{-1}(dy) , \    \mbox{Tr}\{[\partial_{yz} \delta_{\mu\mu} U^N(t, x^1, \mu^{-1};y,z)]|_{z=y} \Sigma \} \rangle\notag  \\
 &+\langle\mu^{-1}(dy), \    \delta_\mu L^*( t, y, \mu^{-1};x^1)  -  \delta_\mu L^*( t, y, \mu^{-1};y)    \rangle \notag  \\
&+ \langle \mu^{-1}(dy),\   \partial_y\delta_\mu \overline U(t, \mu^{-1}; y)[  \delta_\mu f^*(t, y, \mu^{-1}; x^1)- \delta_\mu f^*(t, y, \mu^{-1}; y) ]  \rangle  \notag \\
& +  \frac12\langle\mu^{-1}(dy) , \    \mbox{Tr}\{[\partial_{yz} \delta_{\mu\mu} \overline U(t, \mu^{-1};y,z)]|_{z=y} \Sigma \} \rangle +o(1)  . \notag
\end{align}
}
Taking $N\to \infty$, we obtain  \eqref{MEUa} as the limiting form of \eqref{MEUN}.

\end{document}